\let\footnote=\endnote
\algnewcommand{\algorithmicand}{\textbf{and }}
\algnewcommand{\algorithmicor}{\textbf{or }}
\algnewcommand{\OR}{\algorithmicor}
\algnewcommand{\AND}{\algorithmicand}
\newenvironment{breakablealgorithm}
  {
  \begin{center}
     \refstepcounter{algorithm}
     \hrule height.8pt depth0pt \kern2pt
     \renewcommand{\caption}[2][\relax]{
      {\raggedright\textbf{\ALG@name~\thealgorithm} ##2\par}%
      \ifx\relax##1\relax 
         \addcontentsline{loa}{algorithm}{\protect\numberline{\thealgorithm}##2}%
      \else 
         \addcontentsline{loa}{algorithm}{\protect\numberline{\thealgorithm}##1}%
      \fi
      \kern2pt\hrule\kern2pt
     }
  }{
     \kern2pt\hrule\relax
  \end{center}
  }
\newcommand{\lovasz}{Lov\'{a}sz~}
\definecolor{tumb}{RGB}{0,101,189}
\newcommand{\revise}[1]{{\color{black}#1}}
\newcommand{\revisee}[1]{{\color{black}#1}}
\newcommand{\reviseee}[1]{{\color{black}#1}}
\begin{document}


\RUNAUTHOR{Zhang, Zheng, and Lavaei}

\RUNTITLE{Gradient-based Algorithms for Convex Discrete Optimization via Simulation}


\TITLE{Gradient-based Algorithms for\\ Convex Discrete Optimization via Simulation}

\ARTICLEAUTHORS{%
\AUTHOR{Haixiang Zhang}
\AFF{Department of Mathematics, University of California, Berkeley, CA 94720, \EMAIL{haixiang\_zhang@berkeley.edu}} 
\AUTHOR{Zeyu Zheng}
\AFF{Department of Industrial Engineering and Operations Research, University of California, Berkeley, CA 94720, \EMAIL{zyzheng@berkeley.edu}}
\AUTHOR{Javad Lavaei}
\AFF{Department of Industrial Engineering and Operations Research, University of California, Berkeley, CA 94720,
\EMAIL{lavaei@berkeley.edu}}
} 

\ABSTRACT{%
We propose new sequential \revise{simulation-optimization algorithms} for general convex optimization via simulation problems with high-dimensional discrete decision space. The performance of \revise{each choice of discrete decision variables} is evaluated via stochastic simulation replications. \revise{If an upper bound on the overall level of uncertainties is known,} our proposed \revise{simulation-optimization algorithms} utilize the discrete convex structure and are guaranteed with high probability to find a solution that is close to the best within any given user-specified precision level. The proposed algorithms work for any general convex problem and the efficiency is demonstrated by \revisee{proven} upper bounds on simulation costs. The upper bounds demonstrate a polynomial dependence on the dimension and scale of the decision space. For some \revise{discrete optimization via simulation} problems, a gradient estimator may be available at low costs along with a single simulation replication. By integrating gradient estimators, which are possibly biased, we propose \revise{simulation-optimization algorithms} to achieve optimality guarantees with a reduced dependence on the dimension under moderate assumptions on the bias.
}%


\KEYWORDS{\revise{Discrete optimization via simulation}, discrete convex functions, sequential \revise{simulation-optimization algorithms}, simulation costs, biased gradient estimators} 

\maketitle

%


\section{Introduction}
Many decision making problems in operations research and management science involve large-scale complex stochastic systems. The objective function in the decision making problems often involve expected system performances that need to be evaluated by discrete-event simulation or general stochastic simulation. The decision variables in many of these problems are naturally \revise{discrete-valued} and multi-dimensional. This class of problems is called discrete optimization via simulation; see \cite{hong2015discrete}. Typically for \revise{discrete optimization via simulation} problems, continuous approximations are either not naturally available or may incur additional errors that are themselves difficult to accurately quantify; see \cite{nelson2010optimization}. This work is centered around designing and proving theoretical guarantees for \revise{simulation-optimization algorithms} to solve discrete optimization via simulation problems with multi-dimensional decision space. 

 

In large-scale complex stochastic systems, one replication of simulation to evaluate the performance of \revise{a single decision} can  be computationally costly. An accurate evaluation of the expected performance associated with \revise{a single decision} \revisee{needs} many independent replications of simulation. Running simulations for \revise{all feasible choices of decision variables} in a high-dimensional discrete space to find the optimal is computationally prohibitive. The use of parallel computing (e.g. \cite{luo2017fully}) may alleviate the computation burden, but to find the best decision in high-dimensional problems can still be challenging. Fortunately, for a number of applications, the objective function exhibits convexity in the discrete decision variables, or the problem can be transformed into a convex one. {One such example with convex structure comes from a bike-sharing system~\citep{singhvi2015predicting,jian2016simulation,freund2017minimizing}. This problem involves around 750 stations and 25000 docks. The goal is to find the optimal allocation of bikes and docks, which are naturally discrete decision variables. The performance of each allocation is evaluated by the dissatisfaction function, which is defined as the total number of failures to rent or return a bike in a whole day. In the presence of non-stationary exogenous random demands and travel patterns, the evaluation of the dissatisfaction function for a given allocation needs to be done by simulation. This simulation is costly as it may need to simulate the full operation of the system over the entire day.  In~\citet{freund2017minimizing}, the expected dissatisfaction function is proved to be ``convex'' under a linear transformation if the stochastic arrival processes are exogenous. 
For this problem, running stochastic simulations for the entire discrete and high-dimensional decision space is computationally prohibitive. It is therefore of interest to explore how the convexity structure of the objective function may help solve the \revisee{simulation-optimization} problem. In fact, many performance functions in the operations research and management science domain exhibit convexity in discrete decision variables. For example, the expected customer waiting time in a multi-server queueing network was proved to be convex in the routing policy and staffing decisions; see \cite{altman2003discrete} and \cite{wolff2002convexity}. \cite{shaked1988stochastic} discuss a wide range of stochastic systems including queueing systems, reliability systems and branching systems and show the convexity of key expected performance measures as a function of the associated decision variable. In addition, a large variety of problems in economics, computer vision and network flow optimization exhibit convexity with discrete decision variables~\citep{murota2003discrete}.}



Even in the presence of convexity, the nominal task in \revise{discrete optimization via simulation} -- correctly finding the \revise{best decision} with high enough probability, which is often referred to as the \textit{Probability of Correct Selection} (PCS) guarantee -- can still be computationally prohibitive.  For a convex problem without convenient assumptions such as strong and strict convexity, there may be a large number of \revise{choices of decision variables} that render very close objective value compared to the optimal. In this case, the simulation efforts to identify the exact \revise{optimal choice of decision variables} can be huge and practically unnecessary.
Our focus, alternatively, is to find \revise{a good choice of decision variables} that is assured to render $\epsilon$-close objective value compared to the optimal with high probability, where $\epsilon$ is any arbitrarily small user-specified precision level. 
This guarantee is also called the \textit{Probability of Good Selection} (PGS) or \textit{Probably Approximately Correct} (PAC) in the literature. This paper adopts the notion of PGS as a guarantee for \revise{simulation-optimization algorithms} design. We refer to \revisee{\cite{eckman2018fixed,eckman2018guarantees}} for thorough discussions on settings when the use of PGS is preferable compared to the use of PCS.
In this work, we propose \revise{simulation-optimization algorithms} that achieve the PGS guarantee for general discrete convex problems, without knowing any further information such as strong convexity, etc. Knowing strong convexity or a specific parametric function form of the objective function, of course, will further enhance the \revise{simulation-optimization algorithms}. However, such fine structural information may not be available a priori for large-scale simulation optimization problems. 
The design of our \revise{simulation-optimization algorithms} utilizes the convex structure and the intuition is that the convex structure of optimization landscapes can provide \textit{global information} through \textit{local evaluations}. Global information helps the algorithm avoid evaluating all feasible \revise{choices of decision variables}, which therefore avoids spending simulation efforts that are proportional to the number of \revise{choices of decision variables} and are exponentially dependent on the dimension in general. Our proposed \revise{simulation-optimization algorithms} are based on stochastic gradient methods and discrete steepest descent methods, which need to be designed as fundamentally different from continuous optimization algorithms. For high-dimensional problems, gradient-based methods are preferred compared to strongly polynomial methods like cutting-plane methods, because \revise{the simulation costs of gradient-based methods usually have a slower growth rate when the dimension increases.}




In order to compare algorithms that all return a solution that achieves the PGS optimality guarantee, we use the metric of expected simulation cost. Intuitively here but with exact definition to follow in the main body of this work, the expected simulation cost is described by the expected number of simulation replications that are run over the decision space, in order to achieve a solution with the PGS guarantee. 
We prove upper bounds on the expected simulation cost for our proposed \revise{simulation-optimization algorithms} that achieve the PGS guarantee. The \revisee{proven} upper bounds show a low-order polynomial dependence on the  decision space dimension $d$. Note that the upper bounds hold for any arbitrary convex problem. As a comparison, if the convex structure is not present or utilized, the expected simulation cost to achieve the PGS guarantee can easily be exponential in the dimension $d$. 
We also provide lower bounds on the expected simulation costs that are needed for any possible \revise{simulation-optimization algorithm}. The lower and upper bounds of expected simulation costs imply the limit of algorithm performance and provide directions to improving existing \revise{simulation-optimization algorithms}.  In general, we refer readers to \cite{mahen17b} and \cite{zhong2021knockout} for more detailed discussions on the use of simulation costs and upper/lower bounds on the order of simulation costs to analyze and compare algorithms.

\subsection{Main Results and Contributions}

We design gradient-based \revise{simulation-optimization algorithms} that achieve the PGS guarantee for \revise{high-dimensional and large-scale discrete convex problems with a known upper bound on the level of overall uncertainties}. We consider the decision space to be $\{ (x_1,x_2\dots,x_d): x_i\in \{1,2,\ldots,N\}, i\in\{1,2,\dots,d\}\}$ that has in total $N^d$ \revise{possible choices of decision variables}. The discrete convexity in high dimension that preserves the mid-point convexity \revise{(namely, the midpoint has an objective value smaller than the average of objective values at the two endpoints)} is called  $L^\natural$-convexity~\citep{murota2003discrete}. From the optimization perspective, our work addresses the stochastic version of discrete convex analysis in~\citet{murota2003discrete}. From the simulation optimization perspective, this work provides \revise{simulation-optimization algorithms} with optimality guarantee and polynomial dependence of simulation costs on dimension, for high-dimensional discrete convex simulation optimization problems.

We categorize our simulation-optimization algorithms to two classes. One class is the \textit{Zeroth-order Algorithm}, for which the simulation is a black-box and one run of simulation can only provide an evaluation of a single decision. The other class is the \textit{First-order Algorithm}, for which the neighboring choices of decision variables can be simultaneously evaluated (possibly \reviseee{results in a biased finite difference gradient estimator}) within a single simulation run for a given choice of decision variables. We develop simulation-optimization algorithms with the PGS guarantee as a major focus, but we also provide algorithms with the PCS-IZ guarantee for cases when the indifference zone (IZ) parameter is known. See \cite{hong2020review} for detailed discussions on the PCS-IZ guarantee. We summarize our results in Table \ref{tab:results}, where algorithm performance is demonstrated by the expected simulation cost. In this table, we omit terms in the expected simulation cost that do not depend on the failing probability $\delta$, i.e., the probability that the solution does not satisfy the specified precision. Therefore, when $\delta$ is very small, the dominating term in the expected computation cost is what we list in Table \ref{tab:results}. This comparison scheme is also considered in~\citet{kaufmann2016complexity}. That being said, we provide all terms in the upper bounds for expected simulation costs in corresponding theorems.

\begin{table}[htbp]
\caption{Upper bounds and lower bounds on expected simulation cost for the proposed \revise{simulation-optimization algorithms} that achieve the PGS and the PCS-IZ guarantees. \revise{Constants and terms that do not depend on $\delta$ are omitted in the $\tilde{O}(\cdot)$ notation.} In comparison, the expected simulation cost without $L^\natural$-convexity is $\tilde{O}(N^d\epsilon^{-2}\log(1/\delta))$. Here, $d$ and $N$ are the problem dimension and scale; \revisee{the feasible set is $\{1,\dots,N\}^d$;} constants $\epsilon$ and $\delta$ are the precision and failing probability of algorithms. }\label{tab:results}
  \begin{center}  
      \begin{tabular}{p{4.9cm}<{\centering}|p{6.3cm}<{\centering}|p{4.4cm}<{\centering}}
          \toprule[2pt]
          \makecell*[c]{\textbf{Algorithms}}       & \makecell*[c]{\textbf{PGS}}    & \makecell*[c]{\textbf{PCS-IZ}\\ (known IZ parameter $c$)}    \\
          \hline
          \makecell*[c]{Zeroth-order Alg.\\ (Gaussian Noise)}  & \makecell*[c]{$\tilde{O}(d^2N^2\epsilon^{-2}\log(1/\delta))$\\ (Lower bound: {$\tilde{O}(d\epsilon^{-2}\log(1/\delta))$})} & \makecell*[c]{$\tilde{O}(d^2\log(N)c^{-2}\log(1/\delta))$}      \\
          \hline
          \makecell*[c]{Zeroth-order Alg.\\ (Assumption \ref{asp:6})} & \makecell*[c]{$\tilde{O}(dN^2\epsilon^{-2}\log(1/\delta))$}  & \makecell*[c]{$\tilde{O}(d\log(N)c^{-2}\log(1/\delta))$}         \\
          \hline
          \makecell*[c]{\revisee{Lower Bound}} & \makecell*[c]{\revisee{$\tilde{O}(d\epsilon^{-2}\log(1/\delta))$}}  & \makecell*[c]{\revisee{$\tilde{O}(dc^{-2}\log(1/\delta))$}}         \\
          \midrule[1pt]
          \makecell*[c]{Biased First-order Alg.\\(Assumption \ref{asp:8})}         & \makecell*[c]{$\tilde{O}( N^3\epsilon^{-2}\log(1/\delta))$ \\(requires additional memory cost)}  & \makecell*[c]{$\tilde{O}( N c^{-2}\log(1/\delta))$}    \\
          \bottomrule[2pt]
      \end{tabular}
  \end{center}
\end{table}

For zeroth-order algorithms, the \lovasz extension~\citep{lovasz1983submodular} is introduced to define a convex linear interpolation of the original discrete function. Using properties of the \lovasz extension~\citep{fujishige2005submodular}, it is equivalent to optimize the interpolated continuous function. Therefore, the projected stochastic subgradient descent method can be used to find PGS solutions. Moreover, the truncation of stochastic subgradients is essential in reducing the expected simulation costs and we prove that the dependence on the dimension $d$ is reduced from $O(d^3)$ to $O(d^2)$ using truncation. In stochastic optimization literature, it is common to assume the stochastic subgradient is bounded when deriving high-probability bounds,
\revise{and we also provide a theoretical guarantee under the boundedness assumption.} When the boundedness assumption can be verified, the dependence on dimension can be further reduced to $O(d)$. \revise{When the indifference zone parameter $c$ is known, 
an accelerated algorithm is proposed and is proved to reduce the dependence on the scale $N$ from $O(N^2)$ to $O(\log(N))$.} Finally, an information-theoretical lower bound is derived to show the limit of \revise{simulation-optimization algorithms}.

For first-order algorithms, we have available gradient information, at a cost as a constant multiplying the cost of one simulation run, for which the constant does not depend on the dimension. This gradient information is regarded as a subgradient estimator. 
In practice, the subgradient estimator can be biased, and there is no convergence guarantee for any optimization \revisee{algorithm} in general. However, under a moderate assumption on the bias, we are still able to develop \revise{simulation-optimization algorithms} that achieve the PGS guarantee through a stochastic version of the steepest descent method. The associated simulation cost does not scale up with $d$, but the memory cost and the number of arithmetic operations can be much larger than those of \revise{simulation-optimization algorithms} designed for the unbiased gradient estimators. 
%
Finally, utilizing the indifference zone, the expected simulation cost can be reduced from $O(N^3)$ to $O(N)$ in terms of dependence on $N$.

\subsection{Literature Review}

The problem of selecting the best or a good \revise{choice of decision variables} through simulation has been widely studied in the simulation literature. The problem is \revisee{often called ranking-and-selection (R\&S)}. We refer to \cite{hong2020review} as a recent review of this literature. There have been two approaches to categorize the R\&S literature. One approach is differentiating the frequentist view and the Bayesian view when describing the probability models and procedures in R\&S; see \cite{kim2006selecting} and \cite{chick2006subjective}. The other approach differentiates the fixed-confidence procedures and the fixed-budget procedures; see \cite{hunter2017parallel} and \cite{hong2020review}. In particular, the probability of correct selection (PCS) of the best \revise{choice of decision variables} has been a widely used guarantee for both types of procedures. Generally in the \revisee{R\&S} problems, there is no structural information such as convexity that is considered.


A large number of R\&S procedures based on the PCS guarantee adopt the indifference zone formulation, called PCS-IZ. The PCS-IZ guarantee is built upon the assumption that the expected performance of the best \revise{choice of decision variables} is at least $c>0$ better than all other \revise{choices of decision variables}. This IZ parameter $c$ is typically assumed to be known, while \cite{fan2016indifference}, as a notable exception, provides selection guarantees without the knowledge of the indifference-zone parameter. In practice, for some problem settings, this IZ parameter may be unknown a priori. When many \revise{choices of decision variables} have close performance compared to the best, it is practically inefficient to select the exact best. In this case, \revise{choices of decision variables} that are close enough to the best are referred to as ``good \revise{choices}" and any one of them can be satisfying. This naturally gives rise to a notion of probability of good selection (PGS). \revisee{\cite{eckman2018fixed,eckman2018guarantees}} have thoroughly discussed settings when the use of PGS is \revisee{preferable to the use of PCS-IZ}. 



\revisee{Discussions on discrete optimization via simulation can be found in \cite{fu2002optimization}, \cite{nelson2010optimization}, \cite{sun2014balancing}, \cite{park2014designing,park2015penalty}, \cite{hong2015discrete} and \cite{chen2018discrete} among others.} \cite{hu2007model,hu2008model} have discussed model reference adaptive search algorithms in order to ensure global convergence. \cite{hong2006discrete,hong2010speeding,xu2010industrial} propose and study algorithms based on the convergent optimization via most-promising-area stochastic search (COMPASS) that can be used to solve general simulation optimization problems with discrete decision variables. The proposed algorithms are computationally efficient and are \revisee{proven to converge} with probability one to optimal points. \cite{lim2012stochastic} studies simulation optimization problems over multidimensional discrete sets where the objective function adopts multimodularity, \revisee{which is equivalent to the submodularity under a linear transform; see two equivalent definitions of multimodular functions in \citet{altman2000multimodularity} and \citet{murota2003discrete}.} They propose algorithms that converge almost surely to the global optimal. \cite{wang2013integer} discusses stochastic optimization problems with integer-ordered decision variables.
\cite{Nelson2020} discusses a statistically guaranteed screening to rule out decisions based on initial simulation experiments utilizing the convex structure. 

When a simulation problem involves a response surface to estimate or optimize over, gradient information may be constructed and used to enhance simulation. \cite{chen2013enhancing} constructs gradient estimator to enhance simulation metamodeling.  \cite{qu2014gradient} proposes a new approach called gradient extrapolated stochastic kriging that exploits the extrapolation structure. \cite{fu2014regression} discusses the use of Monte Carlo gradient estimators to enhance regression. See also \cite{l1990unified} for a review of Monte Carlo gradient estimators. \cite{eckhen20} discusses the use of possibly biased gradient estimators in continuous stochastic optimization, by assuming that the bias is uniformly bounded. \cite{wang2020optimal} considers a setting in which the response surface is a quadratic function and gradient information is available and discusses optimal budget allocation to maximize the probability of correct selection. In general simulation optimization problems, when the decision variables are discrete, the gradient with respect to the decision variable may not be appropriately defined. Instead, the difference of performance between two neighboring \revise{choices of decision variables} contains gradient-like information. \cite{jian2017explore} uses this information to guide the search for the optimal \revise{choices of decision variables}.


\revise{Discrete optimization via simulation} is also formulated as the best-arm identification problem, or the pure-exploration multi-armed bandits problem. The best-arm identification literature usually does not consider the problem structure nor the high-dimensional nature of an arm. \revise{More recent works focus on general distribution families and utilize techniques from the information theory. Informational upper bounds and lower bounds for exponential bandit models are established by the change of measure technique in~\citet{kaufmann2013information,kaufmann2016complexity}. In~\citet{garivier2016optimal}, a transportation inequality is proved and a general non-asymptotic lower bound can be formulated thorough the solution of a max-min optimization problem. \citet{agrawal2020optimal} shows that restrictions on the distribution family are necessary and generalizes the algorithm to models with milder restriction than exponential family.}

\revisee{Discrete optimization via simulation problems} fall into the more general class of problems called discrete stochastic optimization. In contrast to continuous optimization, most works on discrete stochastic optimization \citep{futschik1995confidence,gutjahr1996simulated,futschik1997optimal,kleywegt2002sample,semelhago2020rapid} do not consider the convex structure. The main obstacle to the development of discrete convex optimization lies in the lack of a suitable definition of the discrete convex structure. A natural definition of the discrete convex functions would be functions that are extensible to continuous convex functions. However, for that class of functions, 
the local optimality does not imply the global optimality
and therefore it is not suitable for the purpose of optimization. \revise{An example with spurious local minima is given in Section \ref{sec:Lnatural}.} Later, \citet{favati1990convexity} proposes a stronger condition, named the integral convexity, that ensures the local optimality is equivalent to the global optimality. 
%
On the other hand, after~\citet{lovasz1983submodular} shows the equivalence between the submodularity of a function and the convexity of its \lovasz extension, submodular functions are viewed as the discrete analogy of convex functions in the field of combinatorial optimization. 
\revise{The Fenchel-type min-max duality theorem \citep{fujishige1984theory} and the subgradient \citep{fujishige2005submodular} of submodular functions provide a good framework of applying gradient-based method to the submodular function minimization (SFM) problem.} The SFM problem has wide applications in computer vision, economics, game theory and is well-studied in literature~\citep{lee2015faster,axelrod2020near,zhang2020complexity}. In contrast, the stochastic SFM problem is less understood and~\citet{ito2019submodular} gives the only result on stochastic SFM problem, where they provide upper and lower bounds for finding solutions with small error bound in expectation. 
In~\citet{murota2003discrete}, a generalization of submodular functions, called the $L^\natural$-convex functions, are defined through the translation submodularity. The $L^\natural$-convex functions are equivalent to functions that are both submodular and integrally convex on integer lattice. In addition, the $L^\natural$-convex function has a convex extension that shares similar properties as the \lovasz extension and therefore gradient-based methods are also applicable for $L^\natural$-convex functions minimization. 

\subsection{Notation}

For $N\in\mathbb{N}$, we define $[N]:=\{1,2,\dots,N\}$. For a given set $\mathcal{S}$ and an integer $d\in\mathbb{N}$, the product set $\mathcal{S}^d$ is defined as $\{ (x_1,x_2\dots,x_d): x_i\in\mathcal{S},i\in[d] \}$ \revisee{in which $[d] = \{1,2,\ldots,d\}$}. For example, if $\mathcal{S} = [N]$, then $\mathcal{S}^d = \{ (x_1,x_2\dots,x_d): \revisee{x_i\in [N]}, i\in[d]\}$. For two vectors $x,y\in\mathbb{R}^d$, we use $(x\wedge y)_i := \min\{x_i,y_i\}$ and $(x\vee y)_i := \max\{x_i,y_i\}$ to denote the component-wise minimum and maximum. Similarly, the ceiling function $\lceil\cdot \rceil$ and the flooring function $\lfloor \cdot \rfloor$ round each component to an integer when applied to vectors. \revisee{We denote $\xi_x$ as the random object associated with the stochastic system labeled by the choice of decision variables $x$.}  \revise{The failing probability of simulation-optimization algorithms is denoted as $\delta$. The notation $f = O(g)$ (resp. $f=\Theta(g)$) means that there exist absolute constants $c_1,c_2>0$ such that $f \leq c_1 g$ (resp. $c_1 g \leq f \leq c_2 g$). Similarly, the notation $f = \tilde{O}(g)$ (resp. $f=\tilde{\Theta}(g)$) means that there exist absolute constants $c_1,c_2>0$ and constant $c_3>0$ independent of $\delta$ such that $f \leq c_1 g + c_3$ (resp. $c_1 g \leq f \leq c_2 g + c_3$). 
}

\section{Model and Framework}
The model in consideration contains a complex stochastic system whose performance depends on discrete decision variables that belong to a \revisee{discrete feasible set $\mathcal{X}\subset \mathbb{Z}^d$.}
From a modeling perspective, in a stochastic system, the system performance may depend on three elements: the decision variable $x\in \mathbb{Z}^d$, a random object $\xi_x$ supported on a proper space $(\mathsf{Y}, \mathcal{B}_{\mathsf{Y}})$ that summarizes all the associated random quantities and processes involved in the system when the decision $x$ is taken, and a deterministic function $F:\mathcal{X} \times \mathsf{Y}\rightarrow \mathbb{R}$ that takes the \revise{value of decision variables} and a realization of the randomness as inputs and outputs the associated system performance. Specifically, the deterministic function $F$ captures the full operations logic of the stochastic system, which can be complicated. The objective function with decision variable $x$ is given by
\begin{equation*}
    f(x) := \mathbb{E}[F(x, \xi_x)]. 
\end{equation*}
We consider scenarios when $f(x)$ does not adopt a closed-form representation and can only be evaluated by averaging over simulation samples of $F(x,\xi_x)$. \revisee{More specifically, we write $\xi_{x,1},\xi_{x,2},\ldots,\xi_{x,n}$ as independent and identically distributed (iid) copies of $\xi_x$. We use $\hat{F}_n(x) :=  \frac{1}{n}\sum_{j=1}^{n} F(x,\xi_{x,j})$ to denote the empirical mean of the $n$ independent evaluations for the choice of decision variables $x$.} The selection of \revise{the optimal choice of decision variables} is through the selection of a \revise{choice of decision variable} $x$ that renders \revise{the best} objective value $f(x)$. Denote $x^*$ as \revise{any choice of decision variable} that renders the optimal objective value, such that  \begin{equation} \label{eqn:obj}
    f(x^*) = \min_{x\in \mathcal{X}}~ f(x). 
\end{equation}
Note that we fix the use of minimum operation to represent the optimal. Our general goal is to develop \revise{simulation-optimization algorithms} that select a \revise{good choice of decision variable} $x$, such that 
\begin{equation*}
    f(x) - f(x^*) \le \epsilon,
\end{equation*}
where $\epsilon>0$ is the given user-specified precision level. In this paper, we consider this selection problem in a large decision space with high dimension.

Because $f$ does not have a closed-form representation and has to be evaluated by simulation, we take the view that no further structure information is available in addition to the convex structure. For instance, for a real-world model, $f$ may have a very flat landscape around the minimum, which may not be known a priori. In this case, there may be a number of \revise{choices of decision variables} that render objective value that is at most $\epsilon$ apart from the optimal. This also motivates our goal to select a \revise{good choice of decision variables} instead of the best, because too much computational resource may be needed to identify exactly the best, when the landscape around the minimum is flat. Therefore, our general goal is to develop \revise{simulation-optimization algorithms}  that are expected to robustly work for any convex model without knowing further specific structure. 

Because the precision level $\epsilon$ cannot be delivered almost surely with finite computational budget for simulation, we consider a selection optimality guarantee called \textit{Probability of Good Selection}; see \cite{eckman2018fixed,eckman2018guarantees,hong2020review}. 
\begin{itemize}
     \item \textbf{Probability of good selection (PGS).} \revise{With probability at least $1-\delta$, the solution $x$ returned by an algorithm has objective value at most $\epsilon$ larger than the optimal objective value. }
\end{itemize}

This PGS guarantee is also called the probably approximately correct selection (PAC) guarantee in the literature~\citep{even2002pac,kaufmann2016complexity,ma2017efficient}. While our focus is to design algorithms that satisfy the PGS optimality guarantee, we also consider the optimality guarantee of \textit{Probability of Correct Selection with Indifference Zone} as a comparison.   
\begin{itemize}
    \item \textbf{Probability of correct selection with indifference zone (PCS-IZ).} The problem is assumed to have a unique solution that renders the optimal objective value. The optimal value is assumed to be at least $c>0$ smaller than other objective values. The gap width $c$ is called the \textbf{indifference zone parameter} in~\citet{bechhofer1954single}. The PCS-IZ guarantee requires that \revise{with probability at least $1-\delta$, the solution $x$ returned by an algorithm is the unique optimal solution. }
\end{itemize}

By choosing $\epsilon < c$, algorithms satisfying the PGS guarantee can be directly applied to satisfy the PCS-IZ guarantee. On the other hand, counterexamples in~\citet{eckman2018fixed} show that algorithms satisfying the PCS-IZ guarantee may fail to satisfy the PGS guarantee. This phenomenon is further explained from the hypothesis-testing perspective in~\citet{hong2020review}. The failing probability $\delta$ in either PGS or PCS-IZ is typically chosen to be very small to ensure a high probability result. \revise{Hence, we assume in the following of this paper that $\delta$ is small enough and focus on the asymptotic expected simulation cost.}
%

To facilitate the construction of \revise{simulation-optimization algorithms} that can deliver the PGS guarantee for general convex problems, we specify the composition of \revise{simulation-optimization algorithms} in the next subsection. In addition, we assume that the probability distribution for the simulation output $F(x,\xi_x)$ is sub-Gaussian.
\begin{assumption}\label{asp:1}
The distribution of $F(x,\xi_x)$ is sub-Gaussian with known parameter $\sigma^2$ for any $x\in\mathcal{X}$.
\end{assumption}
The sub-Gaussian distributional assumption part in Assumption \ref{asp:1} is standard in simulation optimization literature; see for example the discussions in \citet{zhong2018fully}. One special case is that the probability distribution for the simulation output at a \revise{choice of decision variables} $x$ is Gaussian with variance $\sigma_x^2$. \revise{However, it is indeed possible that these variances for different $x$'s are unknown in advance, therefore posing a challenge. \revisee{In that regard}, one may consider using the system structure to provide a generic upper bound $\sigma^2 \ge \max_{x\in\mathcal{X}} \sigma_x^2$, particularly when the maximum possible level of uncertainties associated with a system is available. In practice, if the decision maker knows in advance what specific extreme choices of decision variables lead to the highest achievable variance of the system, that would be significantly valuable to find the upper bound. In general, when the variances are not known in advance, such a generic upper bound can sometimes be loose and therefore is conservative. In this work, we take the view that an upper bound (maybe a loose one) is known in advance, and focus on the algorithm design to search for a good solution that has light dependence on the dimension. Note that our analysis under Assumption \ref{asp:1} can be naturally extended to models whose randomness distribution satisfies certain concentration inequalities. For example, when the randomness is sub-exponential (which may have heavier tails \revisee{than} Gaussian), one can apply the Hoeffding-Azuma inequality for sub-exponential tailed martingales to achieve provably efficient algorithms.}


\subsection{\revise{Simulation-optimization algorithms}}

In this subsection, we define different \revise{classes} of \revise{simulation-optimization algorithms}. We hope to design \revise{simulation-optimization algorithms} that can deliver certain optimality guarantee, say, PGS, for any convex model without knowing further structure. A broad range of sequential \revise{simulation-optimization algorithms} consist of three parts.
%
%
\begin{itemize}
    \item The \textbf{sampling rule} determines which \revise{choice of decision variables} to simulate next, based on the history of simulation observations up to current time.
    \item The \textbf{stopping rule} controls the end of the simulation phase and is a stopping time according to the filtration up to current time. We assume that the stopping time is finite almost surely.
    \item The \textbf{recommendation rule} selects the \revise{choice of decision variables} that satisfies the optimality guarantee based on the history of simulation observations.
\end{itemize}
%
%
The \textbf{model} of problem \eqref{eqn:obj} consists of the decision set $\mathcal{X}$, the space of randomness $(\mathsf{Y},\mathcal{B}_\mathsf{Y})$ and the function $F(\cdot,\cdot)$. 
Next, we define the class of \revise{simulation-optimization algorithms} that can deliver solutions satisfying certain optimality guarantee for a given set of models. 
\begin{definition}
Suppose the optimality guarantee $\mathcal{O}$ and the set of models $\mathcal{M}$ is given. A \revise{simulation-optimization algorithm} is called an $(\mathcal{O},\mathcal{M})$-algorithm, if for any model $M\in\mathcal{M}$, the algorithm returns a solution to $M$ that satisfies the optimality guarantee $\mathcal{O}$. 
\end{definition}
We define the set of all models such that the objective function $f(\cdot)$ is convex (defined in the next subsection) on the discrete set $\mathcal{X}$ as $\mathcal{MC}(\mathcal{X})$, or simply $\mathcal{MC}$. Using this definition, a $(\text{PGS},\mathcal{MC})$-algorithm is one that guarantees the finding of a solution that satisfies the PGS guarantee for any convex model without knowing further structure. 

\subsection{Simulation Costs}
In the development of \revise{simulation-optimization algorithms} that satisfy \revisee{a} certain optimality guarantee, especially for large-scale problems, the performance of different algorithms can be compared based on the their computational costs to achieve the same optimality guarantee. We take the view that the simulation cost of generating replications of $F(x,\xi_x)$ is the dominant contributor to the computational cost associated with a \revise{simulation-optimization algorithm}. See also \cite{luo2017fully}, \cite{nietal17}, and \cite{mahen17b}. Therefore, we quantify the computational cost as the total number of evaluations of $F(x,\xi_x)$ for all $x\in\mathcal{X}$. {In some simulation problems but not all, we may also have access to noisy and possibly biased estimates of $f(\cdot)$ near point $x$ along with an evaluation of $F(x,\xi_x)$. The simulation cost in this case is discussed in Section \ref{sec:first-order}.}
For all \revise{simulation-optimization algorithms} proposed in this paper, we provide upper bounds on the expected simulation cost to achieve a certain optimality guarantee. Note that these upper bounds do not rely on the specific structure of the problem in addition to convexity. \revise{The expected simulation cost serves as a measurement to compare different algorithms and provide insights on how the computational cost depends on the scale and dimension of the problem.}


Now, we define the expected simulation cost for a given set of models $\mathcal{M}$ and given optimality guarantee $\mathcal{O}$. 
\begin{definition}
Given the optimality guarantee $\mathcal{O}$ and a set of models $\mathcal{M}$, the \textbf{expected simulation cost} is defined as
\[ T(\mathcal{O},\mathcal{M}) := \inf_{\mathbf{A}~\text{is }(\mathcal{O},\mathcal{M})} \sup_{M\in\mathcal{M}}  \mathbb{E}[\tau], \]
where \revise{$\mathbf{A}$ is a simulation-optimization algorithm} and $\tau$ is the stopping time \revise{of the algorithm $\mathbf{A}$}, which is also the number of simulation evaluations of $F(\cdot,\cdot)$. 
\end{definition}
The notion of simulation cost in this paper is largely focused on
\[  \quad T(\epsilon,\delta,\mathcal{MC}) := T((\epsilon,\delta)\text{-}PGS,\mathcal{MC}),\quad T(\delta,\mathcal{MC}_c) := T((c,\delta)\text{-}PCS\text{-}IZ,\mathcal{MC}_c). \] 
Note that the $(\epsilon,\delta)\text{-}PGS$ refers to the PGS optimality guarantee with user-specified precision level $\epsilon>0$ and confidence level $1-\delta$. The notion $(c,\delta)\text{-}PCS\text{-}IZ$ refers to the PCS-IZ optimality guarantee with confidence level $1-\delta$ and IZ parameter $c$. The class of models $\mathcal{MC}$ include all convex models while $\mathcal{MC}_c$ include all convex models with IZ parameter $c$. { In addition, we mention that all upper bounds derived in this paper are actually almost sure bounds of the simulation cost, while lower bounds only hold in expectation.}

\subsection{Discrete Convex Functions in Multi-dimensional Space}\label{sec:Lnatural}

%
%
In contrast to the continuous case, the discrete convexity has various definitions, e.g., convex extensible functions and submodular functions. Although these concepts coincide for the one-dimensional case, they have essential differences in the multi-dimensional case. In this work, we consider $L^\natural$-convex functions~\citep{murota2003discrete}, which are defined by the mid-point convexity \revise{(defined later in this subsection)} for discrete variables. Considerably many \revise{discrete optimization via simulation} problems have the $L^\natural$-convex structure. For example, the expected customer waiting time in a multi-server queueing network is proved to be a separated convex function~\citep{altman2003discrete,wolff2002convexity} and therefore is $L^\natural$-convex. In addition, the dissatisfaction function of bike-sharing system is shown to be multimodular in~\citet{freund2017minimizing}, which is $L^\natural$-convex under a linear transformation. More examples of $L^\natural$-convex functions are given in~\citet{murota2003discrete}. On the other hand, the minimization of a $L^\natural$-convex function is equivalent to the minimization of its linear interpolation, which is continuous and convex. Combined with the closed-form subgradient, $L^\natural$-convex functions provide a good framework for studying discrete convex simulation optimization problems.

Before we give the definition of $L^\natural$-convexity, we first show that it is not suitable to define discrete convex functions just as functions that have a convex extension. The main problem of this definition based on extension is that the ``local optimality'' may not be equivalent to the global optimality, which is one of the important properties used in convex optimization. \revisee{In the discrete case, we say a point $\bar{x}$ is a \textit{local minimum} of $f(\cdot)$ if $f(\bar{x})\leq f(x)$ for all feasible $x$ such that $\|x-\bar{x}\|_\infty \leq 1$.} Without this property, algorithms may \revisee{get} stuck at spurious local minima and fail to satisfy the optimality guarantee. We give an example to illustrate the failure.
\begin{example}
We consider the case when $N=4$ and $d=2$. The objective function is given as
\[ f(x,y):=4|2x+y-8|+|x-2y+6|. \]
The function $f(x,y)$ is a convex function on the set $[1,4]^2$ and the unique global minimizer is $(2,4)$. When restricted to the integer lattice $\{1,2,3,4\}^2$, the global minimizer is still $(2,4)$. We consider the point $(3,2)$ with objective value $f(3,2)=5$. In the local neighborhood $\{2,3,4\}\times\{1,2,3\}$, \revise{which contains points that have $\ell_\infty$-distance at most $1$ from $(3,2)$}, the objective values are
\begin{align*}
    f(2,1) &= 18,~f(3,1) = 11,~f(4,1) = 12,~f(2,2) = 12,\\
    f(4,2) &= 14,~f(2,3) = 6,~f(3,3) = 7,~f(4,3) = 16.
\end{align*}
Thus, the point $(3,2)$ is a spurious local minimizer of the discrete function. This shows that local optimality cannot imply global optimality. 
\end{example}
On the other hand, the $L^{\natural}$-convexity ensures that local optimality implies global optimality.
Similar to the continuous case, $L^{\natural}$-convex functions can be characterized by the mid-point convexity property.
\begin{definition}
\revisee{A set $\mathcal{S}\subset \mathbb{Z}^d$ is called a $L^\natural$-convex set, if it holds that
\[ x,y\in\mathcal{S} \implies \lfloor (x+y)/2\rfloor, \lceil (x+y)/2\rceil \in \mathcal{S}.  \]
}%
A function $f(x):\mathcal{X}\mapsto\mathbb{R}$ is called a $L^\natural$-convex function, if \revisee{$\mathcal{X}$ is a $L^\natural$-convex set and} the discrete mid-point convexity holds:
\[ f(x)+f(y)\geq f(\lceil (x+y)/2\rceil) + f(\lfloor (x+y)/2\rfloor),\quad \forall x,y\in\mathcal{X}. \]
The set of models such that $f(x)$ is $L^\natural$-convex on $\mathcal{X}$ is denoted as $\mathcal{MC}(\mathcal{X})$, or simply $\mathcal{MC}$. The set of models such that $f(x)$ is $L^\natural$-convex with indifference zone parameter $c$ is denoted as $\mathcal{MC}_c(\mathcal{X})$, or simply $\mathcal{MC}_c$.
\end{definition}
We assume that the objective function is $L^\natural$-convex in the \revisee{remainder} of this work.
\begin{assumption}\label{asp:3}
The objective function $f(x)$ is a $L^\natural$-convex function on \revisee{the $L^\natural$-convex set $\mathcal{X}$}.
\end{assumption}
\revisee{Before proceeding to the properties, we provide a few examples of $L^\natural$-convex sets and $L^\natural$-convex functions.
\begin{example}
Examples of $L^\natural$-convex sets include the whole space $\mathbb{Z}^d$ and the hypercube $[N_1]\times[N_2]\times \cdots\times[N_d]$, where $d$ and $N_i$ are positive integers for all $i\in[d]$. Another important example of $L^\natural$-convex sets is the linearly transformed capacity-constrained hypercube; see the derivation in Section \ref{sec:num}. Specifically, for positive integers $d$, $N$ and $M \leq N$, the following set is $L^\natural$-convex:
\[ \left\{ x\in\mathbb{Z}^d ~|~ x_1 \in [N],~ x_{i+1} - x_i \in [N],~\forall i\in[d-1],~ x_d\leq M \right\}. \]
Examples of $L^\natural$-convex functions include the indicator function of any $L^\natural$-convex set, linear functions and separably convex functions, namely, functions having the form
\[ f(x) = \sum_{i=1}^d f^i(x_i), \]
where $f^i(\cdot)$ is a convex function for all $i\in[d]$. See \citet{murota2003discrete} for more examples.
\end{example}}
%
In the following lemma, we list several properties of $L^\natural$-convex functions.
\begin{lemma}\label{thm:l-cvx}
Suppose that the function $f(x):\mathcal{X}\mapsto\mathbb{R}$ is $L^\natural$-convex. The following properties hold.
\begin{itemize}
    \item There exists a convex function $\tilde{f}(x)$ on \revisee{the convex hull $\mathrm{conv}(\mathcal{X})$} such that $\tilde{f}(x) = f(x)$ for all $x\in\mathcal{X}$.
    \item Local optimality is equivalent to global optimality:
    \[ f(x) \leq f(y),\quad\forall y\in\mathcal{X} \iff f(x) \leq f(y),\quad\forall y\in\mathcal{X}\quad\mathrm{s.t.}~\|y-x\|_\infty = 1. \]
    \item Translation submodularity holds:
    \[ f(x) + f(y) \geq f( (x-\alpha\mathbf{1}) \vee y ) + f( x \wedge (y+\alpha\mathbf{1}) ),\quad \forall x,y\in\mathcal{X},~\alpha\in\mathbb{N}~\mathrm{s.t.}~(x-\alpha\mathbf{1}) \vee y,~ x \wedge (y+\alpha\mathbf{1})\in\mathcal{X}. \]
\end{itemize}
\end{lemma}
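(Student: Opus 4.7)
My plan is to establish the three properties in the stated order, leveraging the first as a tool for the latter two wherever possible, though each admits a direct derivation from the discrete midpoint convexity. All three are classical results from discrete convex analysis and appear in \citet{murota2003discrete,fujishige2005submodular}; the proposal below indicates the mechanics I would reproduce.

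For part (1), I would construct $\tilde{f}$ explicitly as a local Lov\'asz-type extension. Partition $[1,N]^d$ into the unit cubes $C_k = k + [0,1]^d$ for integer vectors $k$ with $k, k+\mathbf{1} \in \mathcal{X}$, and triangulate each such cube into $d!$ simplices indexed by permutations $\sigma \in S_d$, where the simplex $S_{k,\sigma}$ is the convex hull of the chain of vertices $k,\ k+e_{\sigma(1)},\ k+e_{\sigma(1)}+e_{\sigma(2)},\ \ldots,\ k+\mathbf{1}$. Define $\tilde{f}$ to be affine on each $S_{k,\sigma}$ with values matching $f$ at the $d+1$ vertices. The extension is well-defined across shared faces because $\tilde{f}$ agrees with $f$ on all integer vertices and each shared face is itself a lower-dimensional simplex with a common vertex chain. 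For convexity, within a single cube it reduces to checking that on adjacent chains (i.e., chains differing by one adjacent transposition in $\sigma$) the piecewise-affine function bends the correct way; this is precisely the submodular inequality on the two relevant integer points, which follows from midpoint convexity since the two points are at $\ell_\infty$-distance one. Convexity across cubes follows by verifying the subgradient inequality at integer points, again reducible to midpoint convexity at neighbors.

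For part (2), the implication $\text{global}\Rightarrow\text{local}$ is trivial. For the converse, I would argue by contradiction: suppose $x$ is locally optimal but there exists $y \in \mathcal{X}$ with $f(y) < f(x)$, and choose such $y$ minimizing $r := \|y-x\|_\infty \geq 1$. Midpoint convexity yields $f(u) + f(v) \leq f(x) + f(y) < 2f(x)$, where $u := \lceil(x+y)/2\rceil$ and $v := \lfloor(x+y)/2\rfloor$, so $\min\{f(u),f(v)\} < f(x)$. Componentwise, $u-x$ and $v-x$ have entries in $\{\lceil r_i/2 \rceil\}$ and $\{\lfloor r_i/2\rfloor\}$ respectively with $|r_i|\leq r$, so $\|u-x\|_\infty, \|v-x\|_\infty \leq \lceil r/2\rceil$. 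If $r\geq 2$ this is strictly less than $r$, contradicting the minimality of $r$; if $r = 1$ then both $u$ and $v$ lie within $\ell_\infty$-distance one of $x$ and at least one is not equal to $x$ (as $y \neq x$), directly contradicting local optimality.

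For part (3), the main obstacle, I would first record the coordinatewise identity $\bigl((x-\alpha\mathbf{1})\vee y\bigr) + \bigl(x \wedge (y+\alpha\mathbf{1})\bigr) = x+y$ (easy casework on the sign of $x_i - \alpha - y_i$), so the claim asserts a favorable redistribution of mass. Proceed by induction on $\alpha$. For the base case $\alpha=0$, I would derive ordinary submodularity $f(x \vee y) + f(x \wedge y) \leq f(x) + f(y)$ from midpoint convexity by induction on $\|x-y\|_\infty$: when $\|x-y\|_\infty \leq 1$ the ceiling and floor of $(x+y)/2$ coincide with $x \vee y$ and $x \wedge y$ componentwise, and for larger distances the midpoint operation strictly reduces $\ell_\infty$-distance while the join and meet are preserved, permitting iteration. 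For the inductive step from $\alpha - 1$ to $\alpha$, I would apply the midpoint inequality to the shifted pair $x$ and $y + \mathbf{1}$, or equivalently restrict the extended $\tilde{f}$ to the segment joining $x - (\alpha/2)\mathbf{1}$ and $y + (\alpha/2)\mathbf{1}$ and use convexity of $\tilde{f}$ established in part (1). The principal subtlety is keeping auxiliary points within $\mathcal{X}$; this is handled by extending $f$ to $+\infty$ outside $\mathcal{X}$, which preserves $L^\natural$-convexity on $\mathbb{Z}^d$ because $[N]^d$ is itself an $L^\natural$-convex set, and then restricting the concluding inequality to the hypothesized case where both right-hand side points belong to $\mathcal{X}$.
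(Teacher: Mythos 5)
The paper itself offers no proof of this lemma: it is presented as a list of known facts with a citation to \citet{murota2003discrete}, and the only piece the paper later re-derives is the pieced-together convex extension (Theorem \ref{thm:consistent}), whose convexity it obtains by invoking integral convexity (Theorem 7.20 of Murota) rather than by a direct check. Against that backdrop, your part (2) is a correct and self-contained argument, and your part (1) matches the paper's later construction in spirit, with the caveat that ``convexity across cubes \ldots reducible to midpoint convexity at neighbors'' is precisely the step where integral convexity does the work and is not a one-line verification.

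The genuine gap is in part (3). For the base case $\alpha=0$ you claim that when $\|x-y\|_\infty\geq 2$ ``the midpoint operation strictly reduces $\ell_\infty$-distance while the join and meet are preserved, permitting iteration.'' The preservation claim is false: for $x=(1,3)$, $y=(3,1)$ one has $\lceil (x+y)/2\rceil=\lfloor (x+y)/2\rfloor=(2,2)$, whose join and meet are both $(2,2)$, not $x\vee y=(3,3)$ and $x\wedge y=(1,1)$; midpoint convexity applied to $(x,y)$ yields $f(1,3)+f(3,1)\geq 2f(2,2)$, which is not the submodular inequality, and iterating the same move never produces it. The inequality is still true, but the correct route is different: midpoint convexity at $\ell_\infty$-distance-one anti-comparable pairs is exactly unit-square submodularity $f(z+e_i)+f(z+e_j)\geq f(z)+f(z+e_i+e_j)$, and one then invokes the standard local-to-global characterization of submodularity on a box (for the pair above, this amounts to chaining the four unit-square inequalities around the center point). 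Your inductive step in $\alpha$ has the same defect in stronger form: the fallback of ``restrict the extended $\tilde f$ to the segment \ldots and use convexity of $\tilde f$'' cannot work, because convexity of the extension is strictly weaker than $L^\natural$-convexity and implies no submodularity-type inequality whatsoever. Part (3) therefore needs the actual argument behind Murota's Theorem 7.7 (local submodularity of the associated function $g(x_0,x)=f(x-x_0\mathbf{1})$ plus local-to-global), not the mechanism you describe.
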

The $L^\natural$-convexity can be viewed as a combination of submodularity and integral convexity~\citep[Theorem 7.20]{murota2003discrete}. Intuitively, the submodularity ensures the existence of a piecewise linear convex interpolation in the local neighborhood of each point, while the integral convexity ensures that the piecewise linear convex interpolations can be pieced together to form a convex function on $[1,N]^d$. In addition, we can calculate a subgradient of the convex extension with $O(d)$ function value evaluations. Hence, $L^\natural$-convex functions provides a good framework for extending continuous convex optimization theory to the discrete case. 

\section{Simulation-optimization Algorithms and Expected Simulation Costs for a Special Case}
\label{sec:multi-sfm}

In this section and the following section, we propose simulation-optimization algorithms that achieve the PGS guarantee for any simulation optimization problem with a $L^\natural$-convex objective function. We prove upper bounds on the expected simulation costs. 
\revisee{To better present the dependence of expected simulation costs on the scale and dimension of the problem, we assume that the feasible set is the hypercube $[N]^d$ in complexity analysis.
\begin{assumption}\label{asp:4}
The feasible set of decision variables is $\mathcal{X} = [N]^d$, where $N\geq2$ and $d\geq1$. 
\end{assumption}
In large-scale simulation problems, either $N$, or $d$, or both $N$ and $d$ can be large.
%
We note that if the feasible set $\mathcal{X}$ is a general $L^\natural$-convex set, the construction of the convex extension and the analysis are still valid by replacing $N$ with $\max_{x,x'\in\mathcal{X}}\|x-x'\|_\infty$.
Moreover, our algorithms are directly applicable to the case where $\mathcal{X}$ is a general $L^\natural$-convex set, which is also the minimal requirement on the feasible set for the definition of $L^\natural$-convexity.}
%
\revisee{In this section, we start with a special case where the decision space is $\{0,1\}^d$ for a large $d$.} We defer the discussions for general $N$ to Section \ref{sec:multi-general}. 
The simulator may have a general complex and discontinuous structure that no unbiased gradient estimator is available within the replication of simulation. For scenarios when a single replication of simulation can also generate gradient information at very low costs, we propose and analyze \revise{simulation-optimization algorithms} in Section \ref{sec:first-order}.

The general idea of designing simulation-optimization \revisee{algorithms} in the multi-dimensional case is to construct subgradients of the convex extension with $O(d)$ function value evaluations on \revise{the neighboring choices of a decision}. Hence, the stochastic subgradient descent (SSGD) method can be used to solve problem \eqref{eqn:obj}. Compared with the bi-section method and general cutting plane methods, gradient-based methods have two advantages in our case. First, as pseudo-polynomial algorithms, gradient-based methods usually have lighter dependence on the problem dimension $d$ compared to strongly polynomial or weakly polynomial algorithms. 
For example, the deterministic integer-valued submodular function minimization (SFM) problem can be solved with $\tilde{O}(d),\tilde{O}(d^2),\tilde{O}(d^3)$ function value evaluations using pseudo-polynomial~\citep{axelrod2020near}, weakly polynomial and strongly polynomial~\citep{lee2015faster} algorithms, respectively. Usually, gradient-based methods have extra polynomial dependence on the Lipschitz constant of the objective function, in exchange for the reduced dependence on $d$. \revisee{However, for a large group of problems, the Lipschitz constant may be estimated a priori.}
Moreover, we can design algorithms whose expected simulation cost does not critically rely on the Lipschitz constant, in the sense that the Lipschitz constant only appears in a smaller order term in the expected simulation cost. Hence, gradient-based methods are preferred for high-dimensional problems. On the other hand, ordinary cutting plane methods are not robust to noise and problem-specific stabilization techniques should be designed for stochastic problems~\citep{Sen2001}, or complicated robust scheme should be constructed~\citep{nemirovsky1983problem,agarwal2011stochastic}. 
Considering these two advantages of gradient-based methods, we focus on the SSGD method in designing our \revise{simulation-optimization algorithms} and make the assumption that \revisee{an upper bound of the $\ell_\infty$-Lipschitz constant is known a priori.}
\begin{assumption}\label{asp:5}
\revisee{An upper bound on the $\ell_\infty$-Lipschitz constant of $f(x)$ is known to be $L$ a priori.} Namely, we know beforehand that
\[ |f(x) - f(y)| \leq L,\quad\forall x,y\in\mathcal{X}\quad\mathrm{s.t.}~\|x-y\|_\infty \leq 1.  \]
\end{assumption}
We remark that this constant $L$, in the general decision-making contexts, reflects the impact on the objective function by a small change \revisee{in the value of} the high-dimensional decision variable. For example, in bike-sharing applications, this $L$ may reflect the impact of allocating one more bike to a station. Whether the objective function being revenue or number of dissatisfied customers, the upper bound on the impact of allocating one more bike can be quantified. \revisee{The estimation of $L$ usually relies on the domain knowledge about the problem. For example, the user dissatisfaction function in the bike-sharing application takes values in $\{0,1,\dots,M\}$, where $M$ is the expected number of users each day. Then, an estimate of the Lipschitz constant is $L\leq M$.}

When the decision space is $\mathcal{X} = \{0,1\}^d$, $L^\natural$-convex functions are equivalent to submodular functions and therefore problem \eqref{eqn:obj} is equivalent to the stochastic submodular function minimization (stochastic SFM) problem. To prepare the design of simulation algorithms, we first define the \lovasz extension of submodular functions and give an explicit subgradient of the \lovasz extension at each point.

\begin{definition}
Suppose that function $f(x):\{0,1\}^d\mapsto \mathbb{R}$ is a submodular function, \revise{i.e., it holds that
\[ f(x) + f(y) \geq f(x \wedge y) + f(x \vee y) ,\quad \forall x,y\in\{0,1\}^d. \]
}%
For any $x\in[0,1]^d$, we say a permutation $\alpha_x:[d]\mapsto[d]$ is a \textbf{consistent permutation} of $x$, if
\[ x_{\alpha_x(1)} \geq x_{\alpha_x(2)} \geq \cdots \geq x_{\alpha_x(d)}. \]
\revise{We define \revisee{$S^{x,0} := (0,\dots,0)$}. For each $i\in\{1,\dots,d\}$,} the \textbf{$i$-th neighbouring point} of $x$ is defined as
\[ S^{x,i} := \sum_{j=1}^i~e_{\alpha_x(j)} \in \mathcal{X}, \]
where vector $e_k$ is the $k$-th unit vector of $\mathbb{R}^d$. We define the\textbf{ \lovasz extension} $\tilde{f}(x):[0,1]^d\mapsto\mathbb{R}$ as
\begin{align}\label{eqn:submodular} \tilde{f}(x) := f\left(S^{x,0}\right) + \sum_{i=1}^d\left[f\left(S^{x,i}\right) - f\left(S^{x,i-1}\right)\right]x_{\alpha_{x}(i)}. \end{align}
\end{definition}
We note that the value of the \lovasz extension does not rely on the consistent permutation we choose. \revise{A numerical illustration of the \lovasz extension is provided in the appendix.} We list several well-known properties of the \lovasz extension and refer their proofs to~\citet{lovasz1983submodular,fujishige2005submodular}. We note that the \revisee{subdifferential} at point $x\in[0,1]^d$ is defined as the set
\[ \partial\tilde{f}(x) = \left\{ g\in\mathbb{R}^d : \left\langle g, x - y \right\rangle \geq \tilde{f}(x) - \tilde{f}(y),~\forall y\in[0,1]^d  \right\}. \]
\begin{lemma}\label{lem:submodular}
Suppose that Assumptions \ref{asp:1}-\ref{asp:5} hold. Then, the following properties of $\tilde{f}(x)$ hold.
\begin{itemize}
    \item[(i)] For any $x\in \mathcal{X}$, it holds $\tilde{f}(x) = f(x)$.
    \item[(ii)] The minimizers of $\tilde{f}(x)$ satisfy $\argmin_{x\in[0,1]^d}\tilde{f}(x) = \argmin_{x\in \{0,1\}^d}{f}(x)$.
    \item[(iii)] Function $\tilde{f}(x)$ is a convex function on $[0,1]^d$.
    \item[(iv)] A subgradient $g\in\partial\tilde{f}(x)$ is given by
    \begin{align}\label{eqn:subgrad} g_{\alpha_x(i)} := f\left(S^{x,i}\right) - f\left(S^{x,i-1}\right),\quad\forall i\in[d]. \end{align}
    \item[(v)] Subgradients of $\tilde{f}(x)$ satisfy
    \[ \|g\|_1 \leq 3L/2 ,\quad\forall g\in\partial \tilde{f}(x),~x\in[0,1]^d. \]
\end{itemize}
\end{lemma}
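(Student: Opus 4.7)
The proof breaks into five items, which I would handle in the order (i), (iii), (ii), (iv), (v) since each later claim builds on the earlier ones. The unifying idea is to represent $\tilde f$ as the pointwise maximum of finitely many affine functions indexed by permutations of $[d]$; from this representation, convexity and the subgradient formula drop out simultaneously.

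For (i), if $x = \chi_A$ is the indicator of some $A \subseteq [d]$, then every consistent permutation $\alpha_x$ lists the indices of $A$ in its first $|A|$ slots (in some order), so $x_{\alpha_x(i)} = 1$ iff $i \le |A|$ and the sum in \eqref{eqn:submodular} telescopes to $f(S^{x,|A|}) = f(A) = f(x)$. For (iii), I would show that $\tilde f(x) = \max_{\sigma} \ell_\sigma(x)$, where $\ell_\sigma$ is the affine function obtained by substituting an arbitrary permutation $\sigma$ for $\alpha_x$ on the right-hand side of \eqref{eqn:submodular}; the inequality $\ell_\sigma(x) \leq \ell_{\alpha_x}(x) = \tilde f(x)$ follows by applying the submodular inequality iteratively to swap adjacent pairs of $\sigma$ until the permutation becomes consistent with $x$, each swap being justified by the submodularity of $f$. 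A pointwise maximum of affine functions is convex, giving (iii). Property (ii) then follows because $\tilde f$ is piecewise linear on each simplicial cell $\{x \in [0,1]^d : x_{\sigma(1)} \geq \cdots \geq x_{\sigma(d)}\}$, whose vertices all lie in $\{0,1\}^d$; since a linear function on a simplex attains its minimum at a vertex, $\min_{[0,1]^d} \tilde f = \min_{\{0,1\}^d} \tilde f = \min_{\mathcal X} f$ by (i), and the argmin identification follows. Property (iv) is immediate from the max representation: $\tilde f(y) \geq \ell_{\alpha_x}(y) = \tilde f(x) + g \cdot (y - x)$ for all $y$, where $g$ is the coefficient vector of $\ell_{\alpha_x}$, which matches \eqref{eqn:subgrad} exactly.

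For (v), the coordinate bound $|g_{\alpha_x(i)}| = |f(S^{x,i}) - f(S^{x,i-1})| \leq L$ is immediate from Assumption \ref{asp:5}, since $S^{x,i}$ and $S^{x,i-1}$ differ in a single coordinate. To go from $\|g\|_\infty \leq L$ to an $\ell_1$ bound, I would split $g = g^+ - g^-$ and bound each part using submodularity. For $P := \sum_{i:\, g_{\alpha_x(i)} > 0} g_{\alpha_x(i)}$, enumerate the positive-marginal indices as $i_1 < \cdots < i_k$ and set $T_j^+ := \{\alpha_x(i_1), \ldots, \alpha_x(i_j)\}$. Since $T_{j-1}^+ \subseteq S^{x,\,i_j - 1}$, the diminishing-returns property yields $f(S^{x,i_j}) - f(S^{x,\,i_j - 1}) \leq f(T_j^+) - f(T_{j-1}^+)$, and telescoping gives $P \leq f(T_k^+) - f(\emptyset) \leq L$. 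A symmetric argument applied to the submodular function $S \mapsto f([d] \setminus S)$ along the reverse chain bounds $M := \sum_{i:\, g_{\alpha_x(i)} < 0}(-g_{\alpha_x(i)}) \leq L$, yielding $\|g\|_1 = P + M \leq 2L$. The main obstacle is matching the sharper constant $3L/2$ claimed in the lemma: a naive combination of $P, M \leq L$ only gives $2L$, so closing the gap requires exploiting the aggregate identity $P - M = f(\mathbf 1) - f(\mathbf 0) \in [-L, L]$ simultaneously with the individual submodular telescopings, likely by optimizing the choice of the reference chain used to bound $M$ against the chain used for $P$.
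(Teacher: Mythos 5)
The paper does not actually prove this lemma: it records the five properties as ``well-known'' and defers to \citet{lovasz1983submodular} and \citet{fujishige2005submodular}, so there is no in-paper argument to compare against. Your self-contained route is the standard one from exactly those references: representing $\tilde f$ as $\max_\sigma \ell_\sigma$ over all permutations (equivalently, as the support function of the base polyhedron with the greedy vertices as maximizers), with the adjacent-swap/exchange argument showing $\ell_\sigma(x)\le\ell_{\alpha_x}(x)$ via the submodular inequality. This cleanly yields (iii) and (iv) at once, (i) is the telescoping computation you give, and (ii) follows from piecewise linearity on the order cones together with (i); the only caveat there is that the set equality $\argmin_{[0,1]^d}\tilde f=\argmin_{\mathcal X}f$ is itself loosely stated (the left-hand side contains fractional points), and what your vertex argument actually delivers --- equality of the minimum values and of the integral minimizers, via the convex-combination identity \eqref{eqn:submodular-1} --- is all the paper ever uses. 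Items (i)--(iv) are correct as you argue them.

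On (v), your telescoping bounds $P\le L$ and $M\le L$ (the latter via the complement function $S\mapsto f([d]\setminus S)$) are correct and give $\|g\|_1\le 2L$; you are right to be suspicious of the constant $3L/2$, and you should not try to close that gap, because it cannot be closed. Take $d=2$ with $f(0,0)=f(0,1)=f(1,1)=0$ and $f(1,0)=L$: this is submodular (hence $L^\natural$-convex on $\{0,1\}^2$) and satisfies Assumption \ref{asp:5} with constant $L$, yet $\tilde f(x)=L(x_1-x_2)^+$, whose unique (sub)gradient on the region $x_1>x_2$ is $(L,-L)$ with $\|g\|_1=2L$. So $2L$ is tight and the constant $3/2$ in the lemma statement is in error; since property (v) is only ever invoked downstream to conclude $\|g\|_1=O(L)$, your $2L$ bound suffices for every subsequent use, and the aggregate identity $P-M=f(\mathbf 1)-f(\mathbf 0)$ you propose to exploit cannot rescue the smaller constant.
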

To apply the SSGD method to design \revise{simulation-optimization algorithms} for problem \eqref{eqn:obj}, we need to resolve the following two questions:
\begin{itemize}
    \item How to design an unbiased subgradient estimator?
    \item How to round an approximate solution in $[0,1]^d$ to an approximate solution in $\mathcal{X}=\{0,1\}^d$?
\end{itemize}
For the first question, we consider the subgradient estimator at point $x$ as
\revise{\begin{align}\label{eqn:stochastic-subgrad} \hat{g}_{\alpha_x(i)} := F\left(S^{x,i},\xi_i^1\right) - F\left(S^{x,i-1},\xi_{i-1}^2\right),\quad\forall i\in[d], \end{align}
where $\xi_i^j$ are mutually independent for $i\in[d]$ and $j\in[2]$.} By definition, we know the components of $\hat{g}$ are mutually independent and the simulation cost of each $\hat{g}$ is $2d$. Using the subgradient defined in \eqref{eqn:subgrad}, we have
\[ \mathbb{E}\left[ \hat{g}_{\alpha_x(i)} \right] = \mathbb{E}\left[ F\left(S^{x,i},\xi_i\right) - F\left(S^{x,i-1},\xi_{i-1}\right) \right] = f\left(S^{x,i}\right) - f\left(S^{x,i-1}\right) =  g_{\alpha_x(i)},\quad\forall i\in[d], \]
which means that $\hat{g}$ is an unbiased estimator of $g$. 

Next, we consider the second question. We define the relaxed problem as
\begin{align}\label{eqn:obj-relax} f^* := \min_{x\in[0,1]^d}~\tilde{f}(x).\end{align}
Properties (i) and (ii) of Lemma \ref{lem:submodular} imply that the original problem \eqref{eqn:obj} is equivalent to the relaxed problem \eqref{eqn:obj-relax}. In the deterministic case, suppose we already have an $\epsilon$-optimal solution to problem \eqref{eqn:obj-relax}, i.e., a point $\bar{x}$ in $[0,1]^d$ such that $\tilde{f}(\bar{x}) \leq f^* + \epsilon$. Then, we rewrite the \lovasz extension in \eqref{eqn:submodular} as
\begin{align}\label{eqn:submodular-1} \tilde{f}(\bar{x}) = \left[ 1 - \bar{x}_{\alpha_{\bar{x}}(1)} \right]f\left(S^{\bar{x},0}\right) + \sum_{i=1}^{d-1}~\left[ \bar{x}_{\alpha_{\bar{x}}(i)} - \bar{x}_{\alpha_{\bar{x}}(i+1)} \right]f\left(S^{\bar{x},i}\right) + \bar{x}_{\alpha_{\bar{x}}(d)} f\left(S^{\bar{x},d}\right), \end{align}
which is a convex combination of $f\left(S^{\bar{x},0}\right),\dots,f\left(S^{\bar{x},d}\right)$. Hence, there exists an $\epsilon$-optimal solution among the neighboring points of $\bar{x}$. This means that we can solve a sub-problem with $d+1$ points to get the $\epsilon$-optimal solution among neighboring points. For the stochastic case, a similar rounding process can be designed and we give the pseudo-code in Algorithm \ref{alg:multi-dim-round}. \revisee{The rounding process for the $(c,\delta)$-PCS-IZ guarantee follows by choosing $\epsilon = c/2$.}
%
%
\bigskip
\begin{breakablealgorithm}
\caption{Rounding process to a feasible solution}
\label{alg:multi-dim-round}
\begin{algorithmic}[1]
\Require{Model $\mathcal{X},\mathcal{B}_{\mathsf{Y}},F(x,\xi_x)$, optimality guarantee parameters $\epsilon,\delta$, $(\epsilon/2,\delta/2)$-PGS solution $\bar{x}$ to problem \eqref{eqn:obj-relax}.}
\Ensure{An $(\epsilon,\delta)$-PGS solution $x^*$ to problem \eqref{eqn:obj}.}
\State Compute a consistent permutation of $\bar{x}$, denoted as $\alpha$.
\State Compute the neighbouring points of $\bar{x}$, denoted as $S^0,\dots,S^d$.
\State \revise{Simulate at $S^i$ until the $1-\delta/(4d)$ confidence half-width of $\hat{F}_n(S^i)$ is smaller than $\epsilon/4$ for all $i$.}
\State Return \revise{the optimal point $x^* \leftarrow \argmin_{S\in\{S^0,\dots,S^d\}} \hat{F}_n(S)$.}
\end{algorithmic}
\end{breakablealgorithm}
\bigskip
The following theorem proves the correctness and estimates the simulation cost of Algorithm \ref{alg:multi-dim-round}. Note that all the upper bound results on simulation costs in this paper are proved to hold both almost surely and in expectation. We do not differentiate the use of \textit{simulation costs} and \textit{expected simulation costs} in upper bound results. 
\begin{theorem}\label{thm:round}
Suppose that Assumptions \ref{asp:1}-\ref{asp:5} hold. The solution returned by Algorithm \ref{alg:multi-dim-round} satisfies the $(\epsilon,\delta)$-PGS guarantee. The simulation cost of Algorithm \ref{alg:multi-dim-round} is at most
\[ \revise{{O}\left[ \frac{d}{\epsilon^2}\log\left(\frac{d}{\delta}\right) + d \right] = \tilde{O}\left[ \frac{d}{\epsilon^2}\log\left(\frac{1}{\delta}\right) \right].} \]
\end{theorem}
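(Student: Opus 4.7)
The plan is to split the failure budget $\delta$ between two events: (i) the input $\bar{x}$ is a good solution to the relaxed problem, which happens with probability at least $1-\delta/2$ by hypothesis; and (ii) simulation-based selection among the $d+1$ neighboring points $S^{\bar{x},0},\dots,S^{\bar{x},d}$ succeeds, which I will arrange to happen with probability at least $1-\delta/2$. On the intersection of these events, I will show $f(x^*) \leq f^* + \epsilon$, which yields the $(\epsilon,\delta)$-PGS guarantee by a final union bound.

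The heart of the argument leverages the Lov\'asz extension formula (\ref{eqn:submodular-1}): the value $\tilde{f}(\bar{x})$ is a convex combination of $f(S^{\bar{x},0}),\dots,f(S^{\bar{x},d})$ with nonnegative weights summing to one. Consequently, on event (i),
\[
\min_{0 \leq i \leq d} f(S^{\bar{x},i}) \;\leq\; \tilde{f}(\bar{x}) \;\leq\; f^* + \epsilon/2,
\]
where the second inequality is the relaxed PGS guarantee and $f^*$ is the common optimum of both the relaxed and discrete problems by Lemma \ref{lem:submodular}(i)--(ii). Hence there \emph{deterministically} exists some index $i^*$ for which $S^{\bar{x},i^*}\in\mathcal{X}$ is an $\epsilon/2$-optimal discrete solution. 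This lossless rounding from the relaxation onto the integer lattice is where discrete convexity pays off and is the main conceptual step of the proof.

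For step (ii), I would invoke Assumption \ref{asp:3} and Hoeffding's inequality for sub-Gaussian variables. Drawing $n = O\bigl(\sigma^2 \epsilon^{-2} \log((d+1)/\delta)\bigr)$ i.i.d.\ samples at each of the $d+1$ neighbors produces, after a union bound, simultaneous confidence intervals of half-width $\epsilon/4$ valid with probability at least $1-\delta/2$. On this event the empirical-minimum selection $x^*$ obeys
\[
f(x^*) \;\leq\; \hat{F}_n(x^*) + \tfrac{\epsilon}{4} \;\leq\; \hat{F}_n(S^{\bar{x},i^*}) + \tfrac{\epsilon}{4} \;\leq\; f(S^{\bar{x},i^*}) + \tfrac{\epsilon}{2} \;\leq\; f^* + \epsilon,
\]
using in turn the CI at $x^*$, the empirical-minimum property, the CI at $S^{\bar{x},i^*}$, and the bound from the previous paragraph. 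A union bound over (i) and (ii) closes the correctness argument.

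The simulation cost totals $(d+1)\cdot n = O\!\bigl(d\epsilon^{-2}\log(d/\delta)\bigr) = O\!\bigl(d\epsilon^{-2}\log(1/\delta)\bigr) + O\!\bigl(d\epsilon^{-2}\log d\bigr)$; the second summand is independent of $\delta$ and so is absorbed into the $\tilde{O}(\cdot)$ notation per the paper's conventions, recovering the claimed $\tilde{O}\!\bigl(d\epsilon^{-2}\log(1/\delta)\bigr)$ bound (the extra $O(d)$ term in the $O$-form bound arises from rounding the per-point sample count up to an integer). The only genuine obstacle is the convex-combination step using (\ref{eqn:submodular-1}); once that is in place, everything else is a standard Hoeffding plus union-bound bookkeeping exercise.
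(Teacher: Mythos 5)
Your proposal is correct and follows essentially the same route as the paper's proof: use the convex-combination representation \eqref{eqn:submodular-1} of $\tilde{f}(\bar{x})$ to deduce that some neighbor $S^{\bar{x},i^*}$ is $\epsilon/2$-optimal on the intersection with the event that $\bar x$ is an $(\epsilon/2,\delta/2)$-PGS solution of the relaxation, then run a Hoeffding-based selection among the $d+1$ neighbors and chain the confidence intervals exactly as you do. The one place you diverge is the confidence budget for step (ii): you take a union bound over all $d+1$ neighbors, giving a per-point sample count of $O(\sigma^2\epsilon^{-2}\log((d+1)/\delta))$, whereas the paper's proof only invokes the confidence intervals at the two points $S^*$ and $x^*$ and therefore claims a per-point count of $O(\sigma^2\epsilon^{-2}\log(1/\delta))$ with no $\log d$. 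Your version is actually the safer one: since $x^*$ is the \emph{empirical} minimizer, the event that its confidence interval fails is correlated with which point is selected, and the clean way to control it is precisely the uniform bound over all $d+1$ candidates that you use; the paper's two-point accounting implicitly treats $x^*$ as fixed. The price is that your total cost is $O(d\epsilon^{-2}\log(1/\delta)) + O(d\epsilon^{-2}\log d)$, whose $\delta$-independent remainder is $d\epsilon^{-2}\log d$ rather than the $+d$ appearing in the theorem's $O(\cdot)$-form; this does not affect the stated $\tilde{O}(d\epsilon^{-2}\log(1/\delta))$ bound, since under the paper's convention any $\delta$-independent term is absorbed, but it is worth being aware that you are proving a marginally weaker non-asymptotic statement than the one displayed.
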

\begin{proof}{Proof of Theorem \ref{thm:round}.}
The proof of Theorem \ref{thm:round} is given in \ref{ec:round}.
\hfill\Halmos\end{proof}
%
\revise{We note that the simulation cost in the $\tilde{O}$ notation gives the asymptotic simulation cost when $\delta$ is small enough.} After resolving these two problems, we can first use the SSGD method to find an approximate solution to problem \eqref{eqn:obj-relax} and then round the solution to get an approximate solution to problem \eqref{eqn:obj}. Hence, the focus of the remainder of this section is to provide upper bounds of simulation cost to the SSGD method. The main difficulty of giving sharp upper bounds lies in the fact that the \lovasz extension is neither smooth nor strongly-convex. This property of the \lovasz extension prohibits the application of Nesterov acceleration and common variance reduction techniques. 

Now, we propose the projected and truncated SSGD method for the $(\epsilon,\delta)$-PGS guarantee. \revisee{The orthogonal projection onto the convex hull $\mathrm{conv}(\mathcal{X})$, which is defined as
\[ \mathcal{P}_\mathcal{X}(x) := \argmin_{y\in\mathrm{conv}(\mathcal{X})} \|y - x\|_2,\quad\forall x\in\mathbb{R}^d, \]
is applied after each iteration to ensure the feasibility of iteration point. Since the convex hull is a convex set, the projection is well-defined. For the case when the feasible set is $\{0,1\}^d$, the projection is given by
\[ \mathcal{P}_\mathcal{X}(x) := (x \wedge \mathbf{1}) \vee \mathbf{0},\quad\forall x\in\mathbb{R}^d. \]
}%
In addition to the projection, componentwise truncation of stochastic subgradient is critical in reducing expected simulation costs. The truncation operator with threshold $M>0$ is defined as
\[ \mathcal{T}_M(g) := (g \wedge M\mathbf{1}) \vee (-M\mathbf{1}),\quad\forall g \in \mathbb{R}^d. \]
The pseudo-code of projected and truncated SSGD method is listed in Algorithm \ref{alg:multi-dim-ssgd}.
\bigskip
\begin{breakablealgorithm}
\caption{Projected and truncated SSGD method for the PGS guarantee}
\label{alg:multi-dim-ssgd}
\begin{algorithmic}[1]
\Require{Model $\mathcal{X},\mathcal{B}_{\mathsf{Y}},F(x,\xi_x)$, optimality guarantee parameters $\epsilon,\delta$, number of iterations $T$, step size $\eta$, truncation threshold $M$.}
\Ensure{An $(\epsilon,\delta)$-PGS solution $x^*$ to problem \eqref{eqn:obj}.}
\State Choose an initial point $x^0\in\mathcal{X}$.
\For{$t=0,\dots,T-1$}
    \State Generate a stochastic subgradient $\hat{g}^t$ at $x^t$.
    \State Truncate the stochastic subgradient $\tilde{g}^t \leftarrow \mathcal{T}_M\left(\hat{g}^t\right)$.
    \State Update $x^{t+1} \leftarrow \revisee{\mathcal{P}_\mathcal{X}}\left(x^t - \eta\tilde{g}^t\right)$.
\EndFor
\State Compute the averaging point $\bar{x}\leftarrow \left(\sum_{t=0}^{T-1} x^t\right) / T$.
\State Round $\bar{x}$ to an integral point by Algorithm \ref{alg:multi-dim-round}.
\end{algorithmic}
\end{breakablealgorithm}
\bigskip
%
%
The analysis of Algorithm \ref{alg:multi-dim-ssgd} fits into the classical convex optimization framework. 
With a suitable choice of the step size, the truncation threshold and the number of iterations, Algorithm \ref{alg:multi-dim-ssgd} returns an $(\epsilon,\delta)$-PGS solution and the expected simulation cost has $O(d^2)$ dependence on the dimension.  
\begin{theorem}\label{thm:ssgd1}
Suppose that Assumptions \ref{asp:1}-\ref{asp:5} hold and the subgradient estimator in \eqref{eqn:stochastic-subgrad} is used. If we choose
\[ T = \tilde{\Theta}\left[ \frac{d}{\epsilon^2}\log\left(\frac{1}{\delta}\right) \right],\quad M = \tilde{\Theta}\left[\sqrt{\log\left(\frac{dT}{\epsilon}\right)}\right],\quad \eta = \frac{1}{M\sqrt{T}}, \]
then Algorithm \ref{alg:multi-dim-ssgd} returns an $(\epsilon,\delta)$-PGS solution. Furthermore, we have
\[ T(\epsilon,\delta,\mathcal{MC}) = O\left[ \frac{d^2}{\epsilon^2}\log\left(\frac{1}{\delta}\right) + \frac{d^3}{\epsilon^2} \log\left( \frac{d^2}{\epsilon^3} \right)  + \frac{d^3L^2}{\epsilon^2} \right] = \tilde{O}\left[ \frac{d^2}{\epsilon^2}\log\left(\frac{1}{\delta}\right) \right]. \]
%
\end{theorem}
\begin{proof}{Proof of Theorem \ref{thm:ssgd1}.}
The proof of Theorem \ref{thm:ssgd1} is given in \ref{ec:ssgd1}.
\hfill\Halmos\end{proof}
%

Although independent of $\delta$, we note that the last two terms in the expected simulation cost may be comparable to the first term when $\delta$ is not that small.
\revise{We can prove that, without the truncation step (i.e., $M=\infty$), the expected simulation becomes
\[ \tilde{O}\left[ \frac{d^3}{\epsilon^2}\log\left(\frac{1}{\delta}\right) \right]. \]
Hence, the truncation of stochastic subgradient is necessary for reducing the asymptotic expected simulation cost. \revisee{In addition, we note that the Lipschitz constant $L$ is required in determining the truncation threshold $M$; see Lemma EC.3 for more details.} While the the error of the normal SSGD method only contains the optimization residual and the variance terms, the residual of the truncated SSGD method has an extra bias term. We note that the bias term can be made arbitrarily small with high probability by choosing large enough $M$ and utilizing the tail bound for sub-Gaussian random variables, and therefore the total error can be controlled similarly as the normal SSGD method.}
By choosing $\epsilon=c/2$, Algorithm \ref{alg:multi-dim-ssgd} returns a $(c,\delta)$-PCS-IZ solution and the expected simulation cost for the PCS-IZ guarantee is
%
\[ T(\delta,\mathcal{MC}_c) = \tilde{O}\left[ \frac{d^2}{c^2}\log\left(\frac{1}{\delta}\right) \right]. \]
%
%
We note that the expected simulation cost for both guarantees does not critically depend on the Lipschitz constant $L$. \revise{As an alternative to estimator \eqref{eqn:stochastic-subgrad}, one may consider generating a stochastic subgradient by randomly choosing a subset of components and only estimating the chosen components of subgradients. However, using this estimator, we cannot achieve better simulation cost and the expected simulation cost may be critically dependent on $L$. }

\revise{Before finishing the discussion of stochastic SFM problem, we note that the expected simulation cost in Theorem \ref{thm:ssgd1} may be improved if we further assume the stochastic subgradient is bounded almost surely. We provide a detailed analysis in the appendix.}

\section{Simulation-optimization Algorithms and Expected Simulation Costs for the General Case}
\label{sec:multi-general}

In this section, we extend to the general $L^\natural$-convex function minimization problem with decision space $[N]^d$ for general large $N$ and $d$. We design \revise{simulation-optimization algorithms} that achieve the PGS guarantee and prove upper bounds on the simulation costs. 

As an extension to the methodology in Section \ref{sec:multi-sfm}, we first show that the \lovasz extension in the neighborhood of each point can be pieced together to form a convex function on $\mathrm{conv}(\mathcal{X}) = [1,N]^d$. We define the local neighborhood of each point \revise{$y\in[N-1]^d$} as the hypercube
\[ \mathcal{C}_y := y + [0, 1]^d, \]
\revise{where the Minkowski sum of a point $y\in\mathbb{R}^d$ and a set $\mathcal{C}\subset \mathbb{R}^d$ is defined as
\[ y + \mathcal{C} := \{ y + x ~|~ x \in \mathcal{C} \}. \]
}%
We denote the objective function $f(x)$ restricted to $\mathcal{C}_y \cap \mathcal{X}$ as $f_y(x)$.
%
%
For point $x\in\mathcal{C}_y$, we denote $\alpha_x$ as a consistent permutation of $x-y$ in $\{0,1\}^d$, and for each $i\in\{0,1,\dots,d\}$, the corresponding $i$-th neighboring point of $x$ is defined as
\[ S^{x,i} := y + \sum_{j=1}^i~e_{\alpha_x(j)}. \]
By the translation submodularity property of $L^\natural$-convex functions, we know function $f_y(x)$ is a submodular function on $y+\{0,1\}^d$ and its \lovasz extension in $\mathcal{C}_y$ can be calculated as
\begin{align*} \tilde{f}_y(x) := f\left(S^{x,0}\right) + \sum_{i=1}^d\left[f\left(S^{x,i}\right) - f\left(S^{x,i-1}\right)\right]x_{\alpha_{x}(i)}. \end{align*}
Now, we piece together the \lovasz extension in each hypercube by defining
\begin{align}\label{eqn:convex-extension} \tilde{f}(x) := \tilde{f}_y(x),\quad \forall x\in[1,N]^d,~y\in[N-1]^d\quad\mathrm{s.t.}~ x\in \mathcal{C}_y. \end{align}
The next theorem verifies the well-definedness and the convexity of $\tilde{f}$.
\begin{theorem}\label{thm:consistent}
The function $\tilde{f}(x)$ in \eqref{eqn:convex-extension} is well-defined and is convex on $\mathcal{X}$.
\end{theorem}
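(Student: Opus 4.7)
The plan is to split the proof into the two claims of the theorem: that $\tilde{f}$ in \eqref{eqn:convex-extension} is unambiguously defined at points lying in more than one cube $\mathcal{C}_y$, and that the resulting function is convex on $[1,N]^d$. Both claims reduce to properties of the Lovász extension together with the translation submodularity of Lemma \ref{thm:l-cvx}.

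For well-definedness, it suffices to treat two cubes sharing a $(d-1)$-face. Suppose $x\in\mathcal{C}_y\cap\mathcal{C}_{y'}$ with $y'=y+e_k$, so that $(x-y)_k=1$ is maximal and $(x-y')_k=0$ is minimal. I would choose a consistent permutation $\alpha$ of $x-y$ with $\alpha(1)=k$, and the matching consistent permutation $\alpha'$ of $x-y'$ obtained by deleting $k$ from the front of $\alpha$ and appending it to the end. Under this pairing, the intermediate neighboring points satisfy $S^{x,i,y}=S^{x,i-1,y'}$ for $1\le i\le d$, and the coefficients in the Lovász expansion telescope identically (the $f(y)$ term in $\tilde f_y$ has coefficient $1-(x-y)_k=0$, and the $f(S^{x,d,y'})$ term in $\tilde f_{y'}$ has coefficient $(x-y')_k=0$). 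A direct comparison of the resulting expressions then gives $\tilde f_y(x)=\tilde f_{y'}(x)$. Points on lower-dimensional shared faces are handled by iterating this argument along each coordinate flip.

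For convexity, the key observation is that, by translation submodularity (Lemma \ref{thm:l-cvx}), the restriction of $f$ to any vertex set $y+\{0,1\}^d$ is submodular. Hence on each unit cube $\mathcal{C}_y$, the function $\tilde f$ coincides with the Lovász extension of a submodular function, which is piecewise linear and convex by Lemma \ref{lem:submodular}(iii). Combined with well-definedness, $\tilde f$ is therefore continuous and piecewise linear on $[1,N]^d$, and convex on each cube. To upgrade to global convexity, I would verify the standard criterion for piecewise-linear functions: along any line segment crossing from $\mathcal{C}_y$ into an adjacent cube $\mathcal{C}_{y'}$ with $y'=y+e_k$, the one-sided directional derivatives computed from the subgradient formula \eqref{eqn:subgrad} on each side are non-decreasing. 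Using the closed form of the Lovász subgradient, this monotonicity reduces to the discrete midpoint convexity inequality for $f$ at the four vertices involved in the crossing (two on either side of the shared face), which is exactly the defining inequality of $L^\natural$-convexity. Since convexity along every line segment implies convexity of a continuous function on the convex domain $[1,N]^d$, the conclusion follows.

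The main obstacle is the convexity argument at the cube interfaces: each cube carries its own Lovász extension, and one must rule out concave creases along shared faces. Submodularity alone (which only governs a single cube) does not suffice here; it is precisely the integral-convexity half of $L^\natural$-convexity, encoded in discrete midpoint convexity, that ensures the directional derivatives match monotonically across faces. I expect the cleanest bookkeeping is to express the slope jump across a face as a linear combination of midpoint-convexity inequalities of $f$ and verify nonnegativity coordinate by coordinate, an argument that parallels the proof of Theorem 7.20 of \citet{murota2003discrete}.
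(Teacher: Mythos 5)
Your well-definedness argument is essentially the paper's: both reduce to two cubes sharing a $(d-1)$-face, pair the consistent permutations by cycling the distinguished coordinate from the front of one ordering to the back of the other, observe that the neighboring-point chains shift by one index while the two extreme terms carry coefficient zero, and handle lower-dimensional intersections by a chain of single-coordinate flips.

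For convexity the two proofs genuinely diverge. The paper argues top-down: it cites Fujishige's characterization of the Lov\'{a}sz extension as the support function of a submodular function together with Murota's Theorem 7.20 ($L^\natural$-convexity implies integral convexity) to conclude that the locally defined pieces assemble into the convex closure of $f$ on $\mathcal{X}$, which is convex by definition. You instead propose a direct crease analysis at cube interfaces, and this does close: since the two linear pieces adjacent to a crossing point of the face $\{x_k = y_k+1\}$ share a $(d-1)$-dimensional face (the orderings of the coordinates $j\neq k$ coincide on both sides), the gradient jump is a multiple of $e_k$, and the crease condition collapses to the single inequality $f(y+e_k)+f(y+\mathbf{1})\le f(y)+f(y+\mathbf{1}+e_k)$, which is exactly discrete midpoint convexity applied to the pair $(y,\,y+\mathbf{1}+e_k)$; your ``four vertices'' are $y$, $y+e_k$, $y+\mathbf{1}$, $y+\mathbf{1}+e_k$. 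So your route is more elementary and self-contained, at the cost of redoing by hand the slice of Murota's Theorem 7.20 that the paper simply cites. If you write it up, make explicit the step that the tangential gradient components agree across the face (a consequence of well-definedness), since that is what reduces the crease condition to a one-dimensional inequality in the normal direction $e_k$.
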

\begin{proof}{Proof of Theorem \ref{thm:consistent}.}
The proof of Theorem \ref{thm:consistent} is given in \ref{ec:consistent}.
\hfill\Halmos\end{proof}
%
%
\revise{A numerical verification of the results of Theorem \ref{thm:consistent} is provided in the appendix.} Properties of the \lovasz extension in Lemma \ref{lem:submodular} can be naturally extended to the convex extension $\tilde{f}(x)$.
\begin{lemma}\label{lem:convex-extension}
Suppose that Assumptions \ref{asp:1}-\ref{asp:5} hold. Then, the following properties of $\tilde{f}(x)$ hold.
\begin{itemize}
    \item For any $x\in\mathcal{X}$, it holds $\tilde{f}(x) = f(x)$.
    \item The minimizers of $\tilde{f}$ satisfy $\argmin_{y\in[1,N]^d}\tilde{f}(y) = \argmin_{y\in [N]^d} {f}(y)$.
    \item For a point $x\in\mathcal{C}_y$, a subgradient $g\in\partial\tilde{f}(x)$ is given by
    \begin{align}\label{eqn:subgrad-general} g_{\alpha_x(i)} := f\left(S^{x,i}\right) - f\left(S^{x,i-1}\right),\quad\forall i\in[d]. \end{align}
    \item Subgradients of function $\tilde{f}(x)$ satisfy
    \[ \|g\|_1 \leq 3L/2,\quad\forall g\in\partial \tilde{f}(x),~x\in\mathcal{X}. \]
\end{itemize}
\end{lemma}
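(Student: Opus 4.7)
The plan is to derive each of the four properties by combining the corresponding statement in Lemma~\ref{lem:submodular} (which addresses a single cube) with Theorem~\ref{thm:consistent} (which provides the global convexity and well-definedness of $\tilde{f}$).

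\textbf{Properties (i) and (ii).} For any $x\in\mathcal{X}$, pick a cube $\mathcal{C}_y$ containing $x$; then $x-y\in\{0,1\}^d$ is a vertex, and Lemma~\ref{lem:submodular}(i) applied to the local submodular function $f_y$ gives $\tilde{f}(x)=\tilde{f}_y(x)=f(x)$. For the minimizer statement, I would take any $z\in[1,N]^d$ in some cube $\mathcal{C}_y$ and observe that the representation in~\eqref{eqn:submodular-1} expresses $\tilde{f}(z)=\tilde{f}_y(z)$ as a convex combination of the values $f(S^{z,0}),\ldots,f(S^{z,d})$ at integer vertices of $\mathcal{C}_y$, hence $\tilde{f}(z)\geq\min_{\mathcal{X}} f$. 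Combined with (i), this shows the two optima coincide and every integer minimizer of $f$ is a global minimizer of $\tilde{f}$; conversely every minimizer of $\tilde{f}$ attains the same value as $\min_{\mathcal{X}} f$, so it can be replaced by an integer vertex that attains the same minimum, matching the statement of~\citet{murota2003discrete}.

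\textbf{Property (iii).} By Lemma~\ref{lem:submodular}(iv) applied to $f_y$, the vector $g$ from~\eqref{eqn:subgrad-general} is a subgradient of the local Lov\'asz extension $\tilde{f}_y$ at $x\in\mathcal{C}_y$, i.e.\ $\tilde{f}_y(z)\geq\tilde{f}_y(x)+\langle g, z-x\rangle$ for all $z\in\mathcal{C}_y$. To upgrade this local inequality to a global one on $[1,N]^d$, I would use Theorem~\ref{thm:consistent}: for any fixed $z\in[1,N]^d$, the function $\phi(t):=\tilde{f}\bigl(x+t(z-x)\bigr)-t\langle g, z-x\rangle$ is convex in $t\in[0,1]$ with $\phi(0)=\tilde{f}(x)$. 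For sufficiently small $t>0$, the point $x+t(z-x)$ stays in a cube sharing $x$ on which the local subgradient inequality applies, so $\phi(t)\geq\phi(0)$ there. Convexity of $\phi$ then forces $\phi'(0^+)\geq 0$, and convexity again forces $\phi(1)\geq\phi(0)$, which is exactly the global subgradient inequality $\tilde{f}(z)\geq\tilde{f}(x)+\langle g, z-x\rangle$.

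\textbf{Property (iv).} Since $f_y$ is the restriction of $f$ to the unit cube $y+\{0,1\}^d$, it inherits the $\ell_\infty$-Lipschitz constant $L$ from Assumption~\ref{asp:5}. Applying Lemma~\ref{lem:submodular}(v) to $f_y$ together with the subgradient $g$ identified in Property~(iii) yields $\|g\|_1\leq 3L/2$ immediately.

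The main obstacle is Property~(iii), specifically the boundary case where $x$ lies on a face shared by several cubes and $z-x$ points out of $\mathcal{C}_y$. There the line-segment argument must be performed in a different cube $\mathcal{C}_{y'}$ containing $x$ whose interior the segment enters, or justified by a continuity/limit argument from nearby interior points; the consistency statement in Theorem~\ref{thm:consistent} ensures that the local subgradient inequality of Lemma~\ref{lem:submodular}(iv) gives the same vector $g$ regardless of which adjacent cube we use, so the promotion to a global subgradient is internally consistent.
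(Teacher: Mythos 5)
The paper does not actually write out a proof of this lemma --- it is asserted as a ``natural extension'' of Lemma~\ref{lem:submodular} via Theorem~\ref{thm:consistent} --- so there is no official argument to compare against. Your treatment of properties (i), (ii) and (iv) is correct and essentially the intended one: restriction to a cube for (i), the convex-combination representation~\eqref{eqn:submodular-1} for (ii), and the Lipschitz bound inherited by $f_y$ for (iv). The problem is property (iii).

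Your resolution of the boundary case rests on the claim that the consistency in Theorem~\ref{thm:consistent} ``ensures that the local subgradient inequality of Lemma~\ref{lem:submodular}(iv) gives the same vector $g$ regardless of which adjacent cube we use.'' That claim is false: Theorem~\ref{thm:consistent} only asserts agreement of \emph{function values} on overlaps, not of the greedy vectors. Concretely, take $d=2$, $N=3$, $f(x)=x_1^2$ (separable convex, hence $L^\natural$-convex), and $x=(2,3/2)\in\mathcal{C}_{(1,1)}\cap\mathcal{C}_{(2,1)}$. In $\mathcal{C}_{(1,1)}$ the local coordinate is $(1,1/2)$, the neighboring points are $(1,1),(2,1),(2,2)$, and \eqref{eqn:subgrad-general} gives $g=(3,0)$; in $\mathcal{C}_{(2,1)}$ the local coordinate is $(0,1/2)$, the neighboring points are $(2,1),(2,2),(3,2)$, and the formula gives $g'=(5,0)$. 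Both are legitimate subgradients (the subdifferential of the interpolant of $x_1^2$ at $x_1=2$ is $[3,5]\times\{0\}$), but they are different vectors. Consequently, when $z-x$ points out of $\mathcal{C}_y$, the local inequality available in the neighboring cube involves $g'$, and your step yields only $\phi(t)\geq\phi(0)+t\langle g'-g,\,z-x\rangle$ with an error of unknown sign; the conclusion $\phi'(0^+)\geq 0$ does not follow.

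The gap is repairable, and the ``limit from nearby interior points'' you mention in passing is the right idea, provided the approximating points are chosen correctly. Note that $g$ is the gradient of the affine function $\ell(z):=f(S^{x,0})+\langle g,\,z-S^{x,0}\rangle$, which coincides with $\tilde{f}$ on the entire full-dimensional simplex $\Delta=\{z\in\mathcal{C}_y:(z-y)_{\alpha_x(1)}\geq\cdots\geq(z-y)_{\alpha_x(d)}\}$ containing $x$ (this is the telescoping identity behind~\eqref{eqn:submodular-1}). Run your line-segment argument at a point $x'$ in the relative interior of $\Delta$, where every direction is initially inward and the issue disappears; this gives $\tilde{f}(z)\geq\tilde{f}(x')+\langle g,\,z-x'\rangle=\ell(z)$ for all $z\in[1,N]^d$. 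Since $\tilde{f}=\ell$ on all of $\Delta$, including its boundary, $g\in\partial\tilde{f}(x)$ follows for every $x\in\Delta$. Equivalently: a convex piecewise-linear function dominates each of its affine pieces, and the gradient of any piece whose closed cell contains $x$ is a subgradient at $x$. If you instead approximate $x$ by arbitrary interior points of $\mathcal{C}_y$, the greedy vector along the sequence need not equal $g$, so the sequence must stay inside the same ordering simplex $\Delta$.
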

Similar to the proof of Theorem \ref{thm:consistent}, the subgradient given in \eqref{eqn:subgrad-general} does not depend on the hypercube and the consistent permutation we choose. The subgradient estimator defined in \eqref{eqn:stochastic-subgrad} is still valid in the general case. Thus, \revisee{changing the orthogonal projection to be
\[ \mathcal{P}_\mathcal{X}(x) := (x \wedge N\mathbf{1}) \vee \mathbf{1},\quad\forall x\in\mathbb{R}^d, \]
Algorithm \ref{alg:multi-dim-ssgd} can be applied to the general case} and we get the counterpart to Theorem \ref{thm:ssgd1}.
\begin{theorem}\label{thm:ssgd3}
Suppose that Assumptions \ref{asp:1}-\ref{asp:5} hold and the subgradient estimator in \eqref{eqn:stochastic-subgrad} is used. If we choose
\[ T = \tilde{\Theta}\left[ \frac{dN^2}{\epsilon^2}\log\left(\frac{1}{\delta}\right) \right],\quad M = \tilde{\Theta}\left[\sqrt{\log\left(\frac{dNT}{\epsilon}\right)}\right],\quad \eta = \frac{N}{M\sqrt{T}}, \]
then Algorithm \ref{alg:multi-dim-ssgd} returns an $(\epsilon,\delta)$-PGS solution. Furthermore, we have
\[ T(\epsilon,\delta,\mathcal{MC}) = O\left[ \frac{d^2N^2}{\epsilon^2}\log\left(\frac{1}{\delta}\right) + \frac{d^3N^2}{\epsilon^2} \log\left( \frac{d^2N}{\epsilon^3} \right) + \frac{d^3N^2L^2}{\epsilon^2} \right] = \tilde{O}\left[ \frac{d^2N^2}{\epsilon^2}\log\left(\frac{1}{\delta}\right) \right]. \]
\end{theorem}
\begin{proof}{Proof of Theorem \ref{thm:ssgd3}.}
The proof of Theorem \ref{thm:ssgd3} is given in \ref{ec:ssgd3}.
\hfill\Halmos\end{proof}
\revisee{We reiterate that the results also apply to the general $L^\natural$-convex set case by replacing the scale $N$ with $\max_{x,x'\in\mathcal{X}}\|x-x'\|_\infty$.}
Similarly, the expected simulation costs in Theorem \ref{thm:ssgd3} can be improved under the bounded stochastic subgradient assumption and we defer the discussion to the appendix. For the PCS-IZ guarantee, we can choose $\epsilon = c / 2$ and Algorithm \ref{alg:multi-dim-ssgd} will return a $(c,\delta)$-PCS-IZ solution. Hence, the above asymptotic simulation costs also hold for the PCS-IZ guarantee. However, with the priori knowledge about the indifference zone parameter, we can design an acceleration scheme similar to~\citet{xu2016accelerated}, which is based on the Weak Sharp Minimum condition. The acceleration scheme reduces the dependence on $N$ from $O(N^2)$ to $O(\log(N))$ and we provide details in the appendix.

\section{Lower Bound on Expected Simulation Cost}
\label{sec:multi-low}

We derive lower bounds on the expected simulation cost for any \revise{simulation-optimization algorithm} that can achieve the PGS guarantee.
In this section, we prove that the expected simulation cost is lower bounded by $O(d\epsilon^{-2}\log(1/\delta))$. We acknowledge that the lower bound may not be tight, but the \revisee{proven} lower bound results suggest the limits for all \revise{simulation-optimization algorithms} to achieve the PGS guarantee for general simulation optimization problems with convex structure. 


To prove lower bounds, basically, \revise{we construct several convex models that are ``similar'' to each other but they have distinct optimal solutions, where the difference between two models is characterized by the Kullback–Leibler (KL) divergence between their distributions.} Hence, any \revise{simulation-optimization algorithms} need a large number of simulation runs to differentiate these models. More rigorously, the information-theoretical inequality in~\citet{kaufmann2016complexity} provides a systematic way to prove lower bounds of zeroth-order algorithms.
%
Given a zeroth-order algorithm and a model $M$, we denote $N_x(\tau)$ as the number of times that $F(x,\xi_x)$ is sampled when the algorithm terminates, where $\tau$ is the stopping time of the algorithm. Then, it follows from the definition that
\[ \mathbb{E}_{M}[\tau] = \sum_{x\in\mathcal{X}}~\mathbb{E}_{M}\left[N_x(\tau)\right], \]
where $\mathbb{E}_M$ is the expectation when the model $M$ is given. Similarly, we can define $\mathbb{P}_{M}$ as the probability when the model $M$ is given. The following lemma was proved in~\citet{kaufmann2016complexity} and is the major tool for deriving lower bounds in this paper.
\begin{lemma}
For any two models $M_1,M_2$ and any event $\mathcal{E}\in\mathcal{F}_{\tau}$, we have
\begin{align}\label{eqn:one-dim-low1} \sum_{x\in\mathcal{X}}~\mathbb{E}_{M_1}\left[N_x(\tau)\right]\mathrm{KL}(\nu_{1,x},\nu_{2,x}) \geq d(\mathbb{P}_{M_1}(\mathcal{E}),\mathbb{P}_{M_2}(\mathcal{E})), \end{align}
where $d(x,y):=x\log(x/y)+(1-x)\log((1-x)/(1-y))$, $\mathrm{KL}(\cdot,\cdot)$ is the KL divergence and $\nu_{k,x}$ is the distribution of model $M_k$ at point $x$ for $k=1,2$.
\end{lemma}
\revise{The information-theoretical inequality \eqref{eqn:one-dim-low1} is our major tool for deriving lower bounds.} We first reduce the construction of $L^\natural$-convex functions to the construction of submodular functions. Then, using the family of submodular functions defined in~\citet{graur2020new}, we can construct $d+1$ submodular functions that have different optimal solutions and have the same value except on $d+1$ potential solutions. Hence, the algorithm has to simulate enough samples on the $d+1$ potential solutions to decide the optimal solution and the simulation cost is proportional to $d$. 
\begin{theorem}\label{thm:multi-dim-low}
Suppose that Assumptions \ref{asp:1}-\ref{asp:4} hold. We have
\[  T(\epsilon,\delta,\mathcal{MC}) \geq \Theta\left[\frac{d}{\epsilon^2}\log\left(\frac{1}{\delta}\right)\right]. \]
\end{theorem}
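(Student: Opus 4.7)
The plan is to specialize to the $N=2$ case (so $\mathcal{X}=\{0,1\}^d$, where $L^\natural$-convexity coincides with submodularity, which is a subclass of $\mathcal{MC}$), and then apply the change-of-measure inequality \eqref{eqn:one-dim-low1} to a family of $d+1$ submodular models that share all distributions except at a small set of ``candidate'' minimizers. It suffices to lower-bound $T(\epsilon,\delta,\mathcal{MC})$ on this restricted family, since any $(\mathrm{PGS},\mathcal{MC})$-algorithm must in particular work on it.

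First I would fix the noise model: at every $x$, the simulation output $F(x,\xi_x)$ is Gaussian with variance $\sigma^2$ and mean $f_k(x)$ under model $M_k$. With this choice, $\mathrm{KL}(\nu_{k,x},\nu_{k',x}) = (f_k(x)-f_{k'}(x))^2/(2\sigma^2)$, which lets us exchange ``samples needed'' for ``squared mean gap.'' Next I would invoke the submodular construction of Graur et al.\ (2020): there is a base submodular function $f_0$ on $\{0,1\}^d$ and $d+1$ perturbations $f_1,\dots,f_{d+1}$, each remaining submodular, such that (i) $f_k$ and $f_0$ agree outside a single candidate point $x_k$, (ii) the $x_k$ are distinct, and (iii) under $f_k$ the point $x_k$ is the unique minimizer with a $2\epsilon$ gap to every other point. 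Concretely, $f_k(x_k)=f_0(x_k)-2\epsilon$ (or a variant shifting only by $\pm\epsilon$ so that all $2\epsilon$ gaps hold). If the raw Graur construction does not directly give a $2\epsilon$ gap while preserving submodularity, I would scale the base function and perturbations so that all relevant local submodular inequalities still hold; this is a routine check because each $f_k$ differs from $f_0$ at only one point.

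Now I would apply the $(\epsilon,\delta)$-PGS property. Let $\mathcal{E}_k:=\{\text{algorithm returns }x_k\}$. Under model $M_k$, the PGS guarantee forces $\mathbb{P}_{M_k}(\mathcal{E}_k)\ge 1-\delta$ (any other point is more than $\epsilon$ suboptimal), while under $M_0$ one has $\mathbb{P}_{M_0}(\mathcal{E}_k)\le \delta$ for at least one ``null'' choice, or more generally the $\mathcal{E}_k$ are mutually exclusive so at most one can have probability exceeding $1/2$ under $M_0$. Applying \eqref{eqn:one-dim-low1} with $M_1=M_0$, $M_2=M_k$, and event $\mathcal{E}_k$ gives
\begin{equation*}
\mathbb{E}_{M_0}[N_{x_k}(\tau)]\cdot\frac{(2\epsilon)^2}{2\sigma^2}\;\ge\;d(\mathbb{P}_{M_0}(\mathcal{E}_k),\,\mathbb{P}_{M_k}(\mathcal{E}_k))\;\ge\;\log\!\frac{1}{2.4\delta},
\end{equation*}
using the standard estimate $d(p,q)\ge \log(1/(2.4\delta))$ whenever $p\le \delta$ and $q\ge 1-\delta$ (or its analogue after a pigeonhole step if we cannot guarantee $\mathbb{P}_{M_0}(\mathcal{E}_k)\le\delta$ simultaneously for all $k$). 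Summing over $k=1,\dots,d+1$ and using $\mathbb{E}_{M_0}[\tau]\ge\sum_k\mathbb{E}_{M_0}[N_{x_k}(\tau)]$ yields $\mathbb{E}_{M_0}[\tau]\ge c\,d\sigma^2\epsilon^{-2}\log(1/\delta)$, which is exactly the claimed lower bound.

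The main obstacle is the combinatorial step: verifying that there genuinely exists a family of $d+1$ submodular functions on $\{0,1\}^d$ that agree on all but $d+1$ isolated candidate points while still each placing its unique minimum at a \emph{different} candidate point with an $\Omega(\epsilon)$ gap. I would invoke the Graur et al.\ (2020) construction directly for this; the arithmetic lower-bound argument above is then routine. A secondary technical issue is the pigeonhole/exclusivity step used to turn the PGS guarantee into both an upper bound on $\mathbb{P}_{M_0}(\mathcal{E}_k)$ and a lower bound on $\mathbb{P}_{M_k}(\mathcal{E}_k)$ simultaneously; this can be handled by restricting attention to the $k$'s for which $\mathbb{P}_{M_0}(\mathcal{E}_k)\le 1/(d+1)$, which accounts for at least $d$ of the $d+1$ indices and only loses a constant factor in the final bound.
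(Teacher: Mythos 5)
Your construction and information-theoretic machinery are essentially the paper's: the same Graur et al.\ family of $d+1$ submodular functions differing only on the chain points $x^0,\dots,x^d$, the same change-of-measure inequality \eqref{eqn:one-dim-low1}, the same Gaussian noise model, and the same summation over the $d$ alternative models to extract the factor $d$. For $N=2$ your argument goes through (your pigeonhole hedge is unnecessary: under the base model the unique $\epsilon$-optimal point is $x^0$, so $\mathbb{P}_{M_0}(\mathcal{E}_k)\le\delta$ holds for every $k$ simultaneously), and your event bookkeeping is equivalent to the paper's choice of the single event that the returned point lies in the $\epsilon$-optimal set of $M_0$.

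The gap is your opening reduction. The quantity $T(\epsilon,\delta,\mathcal{MC})$ is defined for the fixed decision space $\mathcal{X}=[N]^d$, and the theorem is asserted under Assumption \ref{asp:4} for every $N\ge 2$; a hard family living on $\{0,1\}^d$ is not a family of models in $\mathcal{MC}([N]^d)$ for $N>2$, and you give no reduction from general $N$ to $N=2$. The paper closes this by extending each submodular $g^i$ to an $L^\natural$-convex function on $\{0,\dots,N\}^d$ via the scaled \lovasz extension $f^i(x)=\tilde g^i(x/N)$ (its Step 1, using Murota's characterizations). That extension breaks your premise (i): after interpolation, $f^0$ and $f^i$ no longer agree outside a single point---they differ on the entire region where the interpolation weight on $x^i$ is nonzero---so the per-model KL inequality no longer localizes to one candidate point and your clean bound $\mathbb{E}_{M_0}[N_{x_k}(\tau)]\cdot(2\epsilon)^2/(2\sigma^2)\ge\log(1/(2.4\delta))$ does not apply. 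The paper's Step 3 repairs this with the telescoping estimate $\sum_{i=1}^{d-1}(x_{\alpha(i)}-x_{\alpha(i+1)})^2+x_{\alpha(d)}^2\le N^2$, which shows that the KL divergences summed over the $d$ alternative models remain $O(\epsilon^2/\sigma^2)$ at every point $x$, so the $d$-fold gain survives. Without that extension-plus-telescoping step (or an explicit restriction of the claim to $N=2$), your argument does not establish the theorem as stated.
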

\begin{proof}{Proof of Theorem \ref{thm:multi-dim-low}.}
The proof of Theorem \ref{thm:multi-dim-low} is given in \ref{ec:multi-dim-low}.
\hfill\Halmos\end{proof}
We note that the lower bound above is also true when Assumption \ref{asp:5} holds with $L\geq \epsilon / N$. In addition, a similar construction to Theorem \ref{thm:multi-dim-low} leads to a lower bound on the expected simulation cost for the PCS-IZ guarantee.
\begin{theorem}\label{thm:multi-dim-low-iz}
Suppose that Assumptions \ref{asp:1}-\ref{asp:4} hold. We have
\[  \reviseee{T(\delta,\mathcal{MC}_c)} \geq \Theta\left[\frac{d}{c^2}\log\left(\frac{1}{\delta}\right)\right]. \]
\end{theorem}
\begin{proof}{Proof of Theorem \ref{thm:multi-dim-low-iz}.}
The proof of Theorem \ref{thm:multi-dim-low-iz} is given in \ref{ec:multi-dim-low}.
\hfill\Halmos\end{proof}

\section{Simulation-optimization Algorithms with Biased Gradient Information}
\label{sec:first-order}

In large-scale \revise{discrete optimization via simulation}, during a simulation run for performance evaluation at a given \revise{value of the $d$-dimensional decision variable} $x$, it is sometimes possible that the neighboring \revise{values of} decision variables (those very close to $x$) can be evaluated simultaneously within the same simulation run for $x$ at marginal costs. See \cite{jian2016simulation} and \cite{jian2017explore} for a bike sharing \revise{discrete optimization via simulation} problem that adopts this feature. When the decision variable $x$ is in continuous space, this simultaneous simulation approach 
is called the \textit{Infinitesimal Perturbation Analysis} (IPA) or the \textit{Forward/Backward Automatic Differentiation}, in which a gradient estimator at $x$ can be obtained within the same simulation run for evaluation of  $x$. In continuous decision space, such gradient estimators can be unbiased under Lipschitz continuity regularity conditions, though no general guarantees on unbiasedness exist when continuity fails. In contrast, for \revise{discrete optimization via simulation} problems, in particular for those where discrete decision variables do not easily relax to continuous variables, \revisee{the difference of function value on $x$ and function value on the neighboring points of $x$ can be viewed as an approximate directional derivative. This approximate gradient information (i.e., the difference of objective function values) is very difficult}, if not impossible, to estimate without bias using only a single simulation run. In general, the system dynamics and logic are different for two different discrete decision variables even when they differ in only one coordinate. Therefore, in the simulation run for \revise{some choice of the decision variable} $x$, the simultaneous evaluation for \revise{neighboring choices of the decision variable} may incur a bias. See Chapter 4 of \cite{jian2017explore} for a detailed discussion in the bike-sharing optimization as an example. Despite the bias, the availability of such gradient information can potentially be beneficial when $d$ is large, because only one simulation run is needed to evaluate a biased version of a $d$-dimension gradient estimator. The gradient estimator can be usually obtained at a marginal cost that does not depend on the dimension $d$, which is much lower than the cost of constructing a finite difference gradient estimator. 

In this section, we provide \revise{simulation-optimization algorithms} to achieve the PGS guarantee for discrete convex simulation optimization problems, when the gradient information is available (but possibly biased) within a simulation run at a cost that does not depend on dimension. We call this class of \revise{simulation-optimization algorithms}, which utilize the available gradient information, \textit{first-order algorithms}. We will show how the use of the gradient information reduces the expected simulation cost and how the bias existing in the gradient information affects the results. We first rigorously define the gradient information that can be obtained in simulation with \revise{different choices of decision variables}. 
The gradient information that can be obtained within one simulation run is generally biased and has correlated components. The existence of correlation may increase the difficulty of analyzing the performance of \revise{simulation-optimization algorithms}. Moreover, the correlation \revise{could} contribute to a larger overall variance of the norm of the subgradient estimator, which may adversely affect the \revise{simulation-optimization algorithm}. 

On the bias side, if the bias in the subgradient estimator can be arbitrarily large, the sign of a subgradient estimator can even be flipped (see an example in \cite{eckhen20}). In those cases, there is in general no guarantee for gradient-based algorithms even for convex problems. Examples in~\citet{ajalloeian2020analysis} also show that the biased gradient-based methods may not converge to the optimum or even dramatically diverge. To circumvent this challenge, some existing works on biased gradient-based methods require the objective function to be smooth and have additional benign geometrical properties, e.g., the strongly convexity or the Polyak-\L ojasiewicz (PL) condition~\citep{devolder2014first,chen2018stochastic,ajalloeian2020analysis,hu2020biased}. Since the convex extension of a general $L^\natural$-convex function is a piecewise linear function and is neither smooth nor strongly convex, these methods which require benign structure cannot be applied to our case. 

%
In the special case when the biased subgradient estimator of $f(x)$ is the unbiased subgradient estimator of another function $h(x)$, we can view $h(x)$ as a perturbed version of $f(x)$. 
We define the \lovasz extension of $h(x)$ in the same way and equivalently minimize the \lovasz extension via the SSGD method. However, \reviseee{since} function $h(x)$ may not be $L^\natural$-convex, its \lovasz extension is a non-smooth and non-convex function and there is no guarantee on the complexity of the SSGD method~\citep{davis2020stochastic,daniilidis2020pathological}. In~\citet{zhang2020complexity}, the authors proposed a stochastic normalized subgradient descent method with sample complexity $O(\epsilon^{-4})$ for finding a point with \revise{a subgradient with norm smaller than $\epsilon$}. Under the assumption of weak convexity, algorithms with sample complexity of $O(\epsilon^{-2})$ have been proved in~\citet{davis2018stochastic,zhang2018convergence,mai2020convergence}. On the other hand, to achieve the same sample complexity as convex optimization, it is proved that the perturbation $h(x) - f(x)$ should has order $O(1/d)$ \revise{for all feasible $x$}~\citep{belloni2015escaping,jin2018local,mangoubi2018convex}. 
However, the existence of the perturbed function $h(x)$ does not always hold and therefore we may not use the above methods.
%

The above discussion shows that some regularity assumptions on the bias are necessary for the applicability of gradient information to achieve the PGS guarantee. Now, we describe a formal definition of biased subgradient estimator along with the assumption on bias. The key in the assumption is to regulate the relative magnitude of the bias, so that in expectation the bias does not flip the sign of \revise{any components of the true subgradient at any choices of decision variables, i.e., the magnitude of any component of the bias is bounded by the magnitude of this component of the true subgradient.} The use of common random variables whenever available in general can contribute to the validity of this assumption. As a comparison, \cite{eckhen20} regulate the \revise{norm} of the bias to provide guarantees for continuous stochastic optimization problems. To prepare notation, the set of \revise{neighboring choices of decision variable} $x\in\mathcal{X}$ is defined as
\[ \mathcal{N}_x := \left\{ x \pm e_{\mathcal{S}}: \mathcal{S} \subset[d] \right\} \cap \mathcal{X}. \]
where $e_i$ is the $i$-th unit vector of $\mathbb{R}^d$ and $e_{\mathcal{S}}$ is the indicator vector $\sum_{i\in\mathcal{S}} e_i$. The following assumption describes the case that allows the gradient information to have bias and correlation among different directions. 
\begin{assumption}[Subgradient estimator with bias and correlation.]\label{asp:8}
Given the bias ratio $a\in[0,1)$, for any point $x\in\mathcal{X}$, there exists a deterministic function $ H_x(y,\eta_y):~ \mathcal{N}_x \times \mathrm{Z} ~ \mapsto ~ \mathbb{R}$ such that 
\begin{align}\label{eqn:bias} \left| \mathbb{E}[H_x(y,\eta_y)] - \left[ f(y) - f(x) \right] \right| \leq a\cdot \left| f(y) - f(x) \right|,\quad\forall y\in\mathcal{N}_x, \end{align}
where $\mathcal{N}_x$ is the set of neighboring points of $x$ and $(\mathrm{Z},\mathcal{B}_{\mathrm{Z}})$ is a proper space that summarizes the randomness of $G(x,\eta_x)$. Moreover, the marginal distribution for each $H_x(y,\eta_y)$ is sub-Gaussian with parameter $\tilde{\sigma}^2$ and the simulation cost of evaluating $H_x(y,\eta_y)$ \emph{for all} $y\in\mathcal{N}_x$ is at most $\gamma$ multiplying the simulation cost of evaluating $F(x,\xi_x)$.
%
\end{assumption}
Under Assumption \ref{asp:8}, $\mathbb{E}[H_x(y,\eta_y)]$ has the same sign as $f(y)-f(x)$ and, using Theorem 7.14 in~\citet{murota2003discrete}, point $x\in\mathcal{X}$ is a minimizer of $f(x)$ if and only if
\[ \mathbb{E}[H_x(y,\eta_y)] \geq 0,\quad\forall y\in\mathcal{N}_x. \]
Therefore, it is still possible to check the global optimality by merely comparing the differences with \revisee{neighboring points}. A similar optimality condition can be established for the PGS guarantee.
Using the above observation, we give an algorithm for the PGS guarantee using the biased subgradient estimator $H_x(y,\eta_y)$. The algorithm can be seen as a stochastic version of the steepest descent method in~\citet{murota2003discrete} and is listed in Algorithm \ref{alg:multi-dim-biased}. 
\bigskip
\begin{breakablealgorithm}
\caption{Adaptive stochastic steepest descent method for the PGS guarantee}
\label{alg:multi-dim-biased}
\begin{algorithmic}[1]
\Require{Model $\mathcal{X},\mathcal{B}_{\mathsf{Y}},F(x,\xi_x)$, optimality guarantee parameters $\epsilon,\delta$, biased subgradient estimator $H_x(y,\eta_y)$, bias ratio $a$.}
\Ensure{An $(\epsilon,\delta)$-PGS solution $x^*$ to problem \eqref{eqn:obj}.}
\State Choose the initial point $x^{0,0}\leftarrow (N/2,\dots,N/2)^T$.
\State Set the initial confidence half-width threshold $h_0 \leftarrow (1-a)L/12$.
\State Set maximal number of epochs $E\leftarrow \lceil \log_2(NL/\epsilon) \rceil$.
\State Set maximal number of iterations $T\leftarrow (1+a)/(1-a) \cdot 6N$.
\For{ $e=0,1,\dots,E-1$ }
    \For{ $ t = 0,1,\dots, T-1 $ }
        \Repeat{ simulate $H_{x^{e,t}}(y,\eta_y)$ for all $y\in\mathcal{N}_{x^{e,t}}$ }
            \State Compute the empirical mean $\hat{H}_{x^{e,t}}(y)$ using all simulated samples for all $y\in\mathcal{N}_{x^{e,t}}$.
            \State Compute the $1-\delta/(ET)$ \revise{one-sided confidence interval}
            \[  \revise{\left[\hat{H}_{x^{e,t}}(y) - h_{y},\infty\right)  ,\quad\forall y\in\mathcal{N}_{x^{e,t}}.} \]
        \Until{ the confidence half-width $h_y \leq h_e$ for all $y\in\mathcal{N}_{x^{e,t}}$ }
        \If{ $ \hat{H}_{x^{e,t}}(y) \leq -2h_e $ for some $y\in\mathcal{N}_{x^{e,t}}$ } \Comment{This takes $2^{d+1}$ arithmetic operations.}
            \State Update $x^{e,t+1}\leftarrow y$.
        \ElsIf{ $ \hat{H}_{x^{e,t}}(y) > -2h_e $ for \revise{all} $y\in\mathcal{N}_{x^{e,t}}$ }
            \State \textbf{break}
        \EndIf
    \EndFor
    \State Set $x^{e+1,0} \leftarrow x^{e,t}$ and $h_{e+1} \leftarrow h_e / 2$.
\EndFor
\State Return $x^{E,0}$.
\end{algorithmic}
\end{breakablealgorithm}
\bigskip
The following theorem verifies the correctness of Algorithm \ref{alg:multi-dim-biased} and estimates its simulation cost.
\begin{theorem}\label{thm:biased-1}
Suppose that Assumptions \ref{asp:1}-\ref{asp:8} hold. Algorithm \ref{alg:multi-dim-biased} returns an $(\epsilon,\delta)$-PGS solution and we have
\[ T(\epsilon,\delta,\mathcal{MC}) = O\left[ \frac{\gamma N^3}{(1-a)^{3}\epsilon^{2}}\log\left(\frac{1}{\delta}\right) + \frac{\gamma N}{1-a} \log\left(\frac{N}{\epsilon}\right) \right] = \tilde{O}\left[ \frac{\gamma N^3}{(1-a)^{3}\epsilon^{2}}\log\left(\frac{1}{\delta}\right) \right]. \]
\end{theorem}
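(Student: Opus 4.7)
The plan is to prove correctness through an epoch-wise invariant controlling the optimality gap and then to bound the simulation cost as a geometric sum over epochs. First I would define the good event $\mathcal{E}$ on which every confidence interval constructed in Algorithm \ref{alg:multi-dim-biased} is valid; by the sub-Gaussian tails in Assumption \ref{asp:8} and a union bound over the at most $ET|\mathcal{N}_x|$ such intervals (setting each per-interval failure level to roughly $\delta/(ET|\mathcal{N}_x|)$), we have $\mathbb{P}(\mathcal{E}) \geq 1-\delta$. On $\mathcal{E}$ the central claim is the invariant
\[
f(x^{e+1,0}) - f^* \leq \frac{6Nh_e}{1-a}, \quad e = 0, 1, \ldots, E-1,
\]
which I would establish by induction on $e$.

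The base case $e=0$ uses Assumption \ref{asp:5} together with the choice $h_0 = (1-a)L/12$: starting from $x^{0,0}=(N/2,\ldots,N/2)^T$, the $\ell_\infty$-Lipschitz property gives $f(x^{0,0}) - f^* \leq NL = 12Nh_0/(1-a)$, so the first epoch must shrink the gap by a factor of two. For the inductive step I would split into two subcases. In case (i), the inner loop breaks at some iteration $t^*$, meaning every $y\in\mathcal{N}_{x^{e,t^*}}$ satisfies $\hat{H}_{x^{e,t^*}}(y) > -2h_e$; on $\mathcal{E}$ this gives $\mathbb{E}[H_{x^{e,t^*}}(y)] > -3h_e$ for all $y$, so Lemma \ref{lem:first-order-1} with $b=3h_e$ yields $f(x^{e,t^*}) - f^* \leq 6Nh_e/(1-a)$. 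In case (ii), all $T$ iterations execute a move, each with $\hat{H}(y) \leq -2h_e$, hence $\mathbb{E}[H(y)] \leq -h_e$; the bias bound in Assumption \ref{asp:8}, together with the sign consistency between $\mathbb{E}[H(y)]$ and $f(y) - f(x)$, then implies $f(y) - f(x) \leq -h_e/(1+a)$. Summing this decrease over the $T = 6N(1+a)/(1-a)$ moves produces a total drop of at least $6Nh_e/(1-a)$, shrinking the gap from $12Nh_e/(1-a)$ (the inductive hypothesis applied with $h_{e-1}=2h_e$) down to $6Nh_e/(1-a)$. Applying the invariant at $e = E-1$ with $E = \lceil\log_2(NL/\epsilon)\rceil$ yields $h_{E-1} \leq (1-a)\epsilon/(6N)$, hence $f(x^{E,0}) - f^* \leq \epsilon$.

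For the simulation cost, each iteration at epoch $e$ must reach confidence width $\leq h_e$, which by sub-Gaussian concentration requires $O(\tilde{\sigma}^2 \log(ET|\mathcal{N}_x|/\delta)/h_e^2)$ samples of $H$, each costing at most $\gamma$ simulations. Multiplying by at most $T$ iterations per epoch and summing over $e = 0,\ldots,E-1$ produces a geometric series in $4^e / h_0^2$ dominated by $e = E-1$. Using $4^E \leq (2NL/\epsilon)^2$, $h_0^{-2} = \Theta((1-a)^{-2}L^{-2})$, and $T = O(N/(1-a))$, the leading term reduces to $O(\gamma N^3 (1-a)^{-3} \epsilon^{-2} \log(1/\delta))$. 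The secondary term $O(\gamma N (1-a)^{-1} \log(N/\epsilon))$ accounts for the unavoidable one-sample-per-iteration cost summed over the $ET = O(N(1-a)^{-1} \log(NL/\epsilon))$ iterations.

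The main obstacle is the calibration in the inductive step: the empirical threshold $2h_e$, the bias factor $a$, and the iteration budget $T$ must be chosen so that the two subcases produce matching bounds of $6Nh_e/(1-a)$. Case (i) delivers this bound through Lemma \ref{lem:first-order-1} with $b = 3h_e$, while case (ii) delivers it only because $T\cdot h_e/(1+a) = 6Nh_e/(1-a)$ exactly halves the starting gap $12Nh_e/(1-a)$. This tight balance, encoded in the constants $1/12$ inside $h_0$ and $6(1+a)/(1-a)$ inside $T$, is what allows the per-epoch halving of $h_e$ to cleanly propagate the invariant and in turn yields the stated cost expression.
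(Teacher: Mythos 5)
Your proposal is correct and follows essentially the same argument as the paper: the same epoch-wise invariant (your $6Nh_e/(1-a)$ equals the paper's $2^{-e}NL$ up to reindexing), the same two-case induction via Lemma \ref{lem:first-order-1} in the early-break case and the telescoping decrease over $T=6N(1+a)/(1-a)$ moves otherwise, and the same geometric-series cost accounting. The only cosmetic difference is that you fix a single good event up front with a union bound over all $ET|\mathcal{N}_x|$ confidence intervals (which is, if anything, slightly more careful than the paper's per-iteration failure-probability accumulation), but this does not change the approach.
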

\begin{proof}{Proof of Theorem \ref{thm:biased-1}.}
The proof of Theorem \ref{thm:biased-1} is given in \ref{ec:biased-1}.
\hfill\Halmos\end{proof}
We note that Algorithm \ref{alg:multi-dim-biased} requires $2^{d+1}$ arithmetic operations for each iteration. Even though they share the same simulation logic, the memory cost may not be negligible, which may also incur additional computational cost of keeping track of large-scale vectors. There is then a trade-off between simulation costs and memory in general, which we do not exactly model in this work. 
To avoid exponentially many arithmetic operations and memory occupation in the steepest descent method, the comparison-based zeroth-order method in~\citet{agarwal2011stochastic} can be extended to our case and reduce the number of arithmetic operations to a polynomial in $d$. In addition, we may consider using the following stochastic coordinate steepest descent method as a simple and fast implementation of Algorithms \ref{alg:multi-dim-biased} and \ref{alg:multi-dim-biased-iz}. Let $x^t$ be the current iteration point and we update by two steps.
\begin{itemize}
    \item[1.] Simulate $H_{x^t}(y,\eta_y)$ for all $y \in \{ x^t \pm e_i,~i\in[d] \}$ until the confidence interval is small enough.
    \item[2.] If for some $y\in \{ x^t \pm e_i,~i\in[d] \}$, we know $f(y) < f(x^t)$ holds with high probability, then update $x^{t+1}=y$; otherwise if $f(y) \geq f(x^t) - O(\epsilon)$ holds for all $y\in \{ x^t \pm e_i,~i\in[d] \}$ with high probability, then we terminate the iteration and return $x^t$ as the solution.
\end{itemize}
%
%

We can see that the number of arithmetic operations for each iteration is $O(d)$. Moreover, an analogous method utilizing $O(d)$ neighboring points in constructing gradient is shown to have good empirical performance in~\citet{jian2017explore}. However, theoretically, without extra assumptions on the problem structure, \revise{the stopping criterion} $f(y) \geq f(x) - O(\epsilon)$ for all $y\in \{ x^t \pm e_i,~i\in[d] \}$ cannot ensure the approximate optimality of solution $x$. We give a counterexample to show that $f(y) \geq f(x)$ for all $y\in \{ x^t \pm e_i,~i\in[d] \}$ cannot ensure the optimality of solution $x$.
\begin{example}
We consider the case when $d=2$ and $N=3$. Define the objective function as
\[ f(x,y) := 2|x - y| - |x + y - 2|,\quad\forall (x,y)\in\{1,2,3\}^2. \]
We can verify that $f(x,y)$ is a $L^\natural$-convex function and its minimizer is $(3,3)$ with optimal value $-4$. Considering point $(2,2)$, we can calculate that
\[ f(2,2) = -2,~f(1,2) = 1,~f(3,2)=-1,~f(2,1)=1,~f(2,3)=-1. \]
Hence, the guarantee is satisfied at $(2,2)$ but the point is not a minimizer of $f(x)$. 
\end{example}

Finally, in the case when the indifference zone parameter $c$ is known, we can prove that choosing $\epsilon = Nc$ is enough for the $(c,\delta)$-PCS-IZ guarantee. \revise{We provide the algorithm and its complexity analysis in the appendix.}

\section{Numerical Experiments}
\label{sec:num}

\revisee{In this subsection, we implement our proposed simulation-optimization algorithms that are guaranteed to find high-confidence high-precision PGS solutions. We first consider the optimal allocation problem of a queueing system, where we show the advantage of using the truncation step. Next, we consider an artificially constructed $L^\natural$-convex function, where more details about the objective function landscape are available for the evaluation of the performance.}

\subsection{Optimal Allocation Problem}

In the optimal allocation problem, we consider the $24$-hour operation of a service system with a single stream of incoming customers. The customers arrive according to a a doubly stochastic non-homogeneous Poisson process with intensity function
\[ \Lambda(t) := 0.5\lambda N \cdot ( 1 - |t - 12| / 12 ),\quad\forall t\in[0,24], \]
where $\lambda$ is a positive constant and $N$ is a positive integer. Each customer requests a service with service time independent and identically distributed according to the log-normal distribution with mean $1/\lambda$ and variance $0.1$. We divide the $24$-hours operation into $d$ time slots with length $24/d$ for some positive integer $d$. For the $i$-th time slot, there are $x_i \in[N]$ of homogeneous servers that work independently in parallel and the number of servers cannot be changed during the slot. Assume that the system operates based on a first-come-first-serve routine, with unlimited waiting room in each queue, and that customers never abandon. 

The decision maker's objective is to select the staffing level $x:=(x_1,\dots,x_d)$ such that the total waiting time of all customers is minimized. Namely, letting $f(x)$ be the expected total waiting time under the staffing plan $x$, then the optimization problem can be written as
\begin{align}\label{eqn:num-1} \min_{x\in[N]^d} f(x). \end{align}
It has been proved in \citet{altman2003discrete} that the function $f(\cdot)$ is multimodular. We define the linear transformation
\[ g(y) := ( y_1,y_2-y_1,\dots, y_d - y_{d-1} ) ,\quad \forall y\in\mathbb{R}^d. \]
Then, \citet{murota2003discrete} has proved that
\[ h(y) := f \circ g(y) = f( y_1,y_2-y_1,\dots, y_d - y_{d-1} ) \]
is a $L^\natural$-convex function on the $L^\natural$-convex set
\[ \mathcal{Y} := \{ y \in [Nd]^d ~|~ y_1\in[N],~ y_{i+1}-y_i \in[N],~i=1,\dots, N - 1 \}. \]
The optimization problem \eqref{eqn:num-1} has the trivial solution $x_1=\cdots=x_d=N$. However, in reality, it is also necessary to keeping the staffing cost low. There are two different approaches to achieve this goal. \revisee{First}, we can constrain the total number of servers $\sum_{i=1}^d x_i$ to be at most $K$, where $K\leq Nd$ is a positive integer and the optimization problem can be written as
\begin{align}\label{eqn:num-2}
    \min_{y\in\mathcal{Y}} h(y) \quad \mathrm{s.t.}~ y_d \leq K.
\end{align}
On the other hand, we can add a regularization term $R(x_1,\dots,x_d) := C/d \cdot \sum_{i=1}^d x_i = C/d \cdot y_d$ 
to the objective function, where $C>0$ is a constant. The optimization problem can be written as
\begin{align}\label{eqn:num-3}
    \min_{y\in\mathcal{Y}} h(y) + C/d \cdot y_d.
\end{align}
We refer problems \eqref{eqn:num-2} and \eqref{eqn:num-3} as the constrained and the regularized problems, respectively. Our algorithms can be extended to this case by considering the \lovasz extension $\tilde{h}(y)$ on the set
\[ \tilde{\mathcal{Y}} := \{ y\in[1,Nd]^d ~|~ y_1\in[1,N],~ y_{i+1}-y_i \in[1,N],~i=1,\dots, N - 1 \}. \]

We compare the performance of the projected SSGD method (Algorithm \ref{alg:multi-dim-ssgd}) with truncation ($M<\infty$) and without truncation ($M=\infty$) on both problems. In the truncation-free case, the step size is chosen to be $\eta=O(N \sqrt{d/T})$. We first fix the dimension (number of time slots) to be $d=4$ and compare the performance when the scale $N\in\{10,20,30,40,50\}$, and we then fix the scale to be $N=10$ and compare the performance when the dimension $d\in\{4,8,12,16,20,24\}$. The parameters of the problem are chosen as $\lambda=4$, $C=50$ and $K = \lfloor Nd / 3\rfloor$, and the optimality guarantee parameters are $\epsilon=N/2$ and $\delta=10^{-6}$. For each problem setup, we average the simulation costs of $10$ independent implementations to estimate the expected simulation cost. Moreover, early stopping is used to terminate algorithms early when little progress is made after some iterations. More concretely, we maintain the empirical mean of stochastic objective function values up to the current iteration and terminate the algorithm if the empirical mean does not decrease by $\epsilon/\sqrt{N}$ after $O(d\epsilon^{-2}\log(1/\delta))$ consecutive iterations.

We first implement both algorithms on the trivial problem \eqref{eqn:num-1} for $10$ times. Since the optimal solution is known, it is possible to verify whether the solutions returned by algorithms are at most $\epsilon$ worse than the optimum, at a confidence that is larger than $1-\delta$. \revisee{In the experiment, we run sufficiently large number of simulation replications to verify the $\epsilon$-optimality at the selected solution with confidence higher than $1-\delta'$, where $\delta'\ll \delta$.}


Next, we consider the performance of algorithms on problems \eqref{eqn:num-2} and \eqref{eqn:num-3}. We summarize the simulation costs and the objective values in Table~\ref{tab:multi-dim}. We can see that both algorithms return a similar objective value and the simulation cost grows when $d$ becomes larger. The growth rate is approximately quadratic. The simulation cost becomes smaller when $N$ gets larger, since we allow \revisee{a} larger sub-optimality gap ($N/2$) when $N$ is larger. We note that the feasible set of both problems is not a hypercube, and thus the dependence of simulation costs on $d$ and $N$ is not exactly quadratic as indicated by our theory. In addition, we can see that the truncation plays an important role in reducing the simulation cost, especially when the dimension is high.

\begin{table}[t]
\caption{Simulation costs and objective function values of Algorithm \ref{alg:multi-dim-ssgd} on the optimal allocation problem. }\label{tab:multi-dim}
  \begin{center}  
      \begin{tabular}{p{0.6cm}<{\centering}p{0.6cm}<{\centering}p{1.5cm}<{\centering}p{1.3cm}<{\centering}p{1.3cm}<{\centering}p{1.3cm}<{\centering}p{1.3cm}<{\centering}p{1.3cm}<{\centering}p{1.3cm}<{\centering}p{1.3cm}}
          \toprule[2pt]
          \multicolumn{2}{c}{}    & \multicolumn{4}{c}{\textbf{Regularized}}  & \multicolumn{4}{c}{\textbf{Constrained}}  \\
          \cline{3-10}
          \multicolumn{2}{c}{\textbf{Params.}}     & \multicolumn{2}{c}{\textbf{Truncated}} & \multicolumn{2}{c}{\textbf{Not truncated}}       & \multicolumn{2}{c}{\textbf{Truncated}} & \multicolumn{2}{c}{\textbf{Not truncated}}   \\ 
          d & N     & Cost    & Obj.       & Cost    & Obj.       & Cost    & Obj.       & Cost    & Obj.   \\ 
          \midrule[1pt]
          4 & 10    & 2.99e5  & 2.10e2     & 6.56e5  & 2.11e2     & 3.00e5  & 4.76e1     & 4.99e5  & 4.97e1       \\
          \hline
          4 & 20    & 1.21e5  & 3.53e2     & 2.61e5  & 3.53e2     & 1.14e5  & 5.23e1     & 1.77e5  & 5.38e1       \\
          \hline
          4 & 30    & 8.85e4  & 4.75e2     & 1.68e5  & 4.76e2     & 7.38e4  & 5.24e1     & 1.23e5  & 5.21e1       \\
          \hline
          4 & 40    & 6.25e4  & 5.91e2     & 1.34e5  & 6.07e2     & 5.28e4  & 5.31e1     & 9.24e4  & 5.28e1       \\
          \hline
          4 & 50    & 5.34e4  & 7.07e2     & 1.08e5  & 7.07e2     & 4.66e4  & 5.64e1     & 6.61e4  & 5.51e1       \\
          \midrule[1pt]
          8 & 10    & 1.19e6  & 1.75e2     & 3.80e6  & 1.76e2     & 1.20e6  & 3.11e1     & 2.23e6  & 3.02e1       \\
          \hline
          12 & 10   & 2.68e6  & 1.59e2     & 9.48e6  & 1.59e2     & 2.69e6  & 1.87e1     & 5.36e6  & 1.86e1       \\
          \hline
          16 & 10   & 6.35e6  & 1.49e2     & 1.31e7  & 1.50e2     & 4.78e6  & 1.49e1     & 1.08e7  & 1.41e1       \\
          \hline
          20 & 10   & 9.91e6  & 1.43e2     & 2.09e7  & 1.46e2     & 9.43e6  & 1.17e1     & 1.70e7  & 1.28e1       \\
          \hline
          24 & 10   & 1.50e7  & 1.35e2     & 3.09e7  & 1.41e2     & 1.36e7  & 9.43e0     & 2.10e7  & 1.17e1       \\
          \bottomrule[2pt]
      \end{tabular}
  \end{center}
\end{table}

\subsection{Separable Convex Function Minimization}

We consider the problem of minimizing a stochastic $L^\natural$-convex function whose expectation is a separable convex function parameterized by \reviseee{a vector $c\in\mathbb{R}^d$} and the optimal solution $x^*\in\mathbb{R}^d$:
\[ f_{c,x^*}(x) := \sum_{i=1}^d c_i g(x^*_i;x_i), \]
where $c_i\in[0.75,1.25]$, $x^*_i \in\{1,\dots,\lfloor0.3N\rfloor\}$ for all $i\in[d]$ and
\[ g(y^*;y) := \begin{cases}
\sqrt{\frac{y^*}{y}} - 1 & \text{if } y \leq y^*\\ \sqrt{\frac{N+1-y^*}{N+1-y}} - 1 & \text{if } y > y^*
\end{cases} ,\quad \forall y,y^* \in [N].
\]
It is observed that the function $f_{c,x^*}(x)$ is a separable convex functions and therefore is $L^\natural$-convex.  Moreover, the function $f_{c,x^*}(x)$ has the optimum $x^*$ associated with the optimal value $0$. For stochastic evaluations, we add Gaussian noise with mean $0$ and variance $1$ to each point $x\in\mathcal{X}$. Due to the $O[(y^*)^{-3/2}]$ growth rate, the landscape of $g(y^*;y)$ is flat around $x^*$. The advantage of this numerical example is that the expected objective function has a closed form, and we are able to verify the $\epsilon$-optimality of the solutions returned by the proposed algorithms. 

To analyze the effect of the dimension and the scale on the expected simulation cost, we first fix $d=10$ and compare the performance when $N=30,60,90,120,150$; then we fix $N=30$ and compare the performance when $d=10,20,30,40,50$. The optimality guarantee parameters are chosen as $\epsilon=(d!)^{1/d}/5$ and $\delta=10^{-6}$. In the one-dimensional case, this choice of $\epsilon$ ensures that the $\epsilon$-sub-level set of the objective function approximately covers $N/4$ choices of decisions. We note that this choice of $\epsilon$ is only for comparisons between different $(d,N)$ and our results can be extended to other choices of $\epsilon$. We compute the average simulation cost of $100$ independently generated models to estimate the expected simulation cost. Similar early stopping criteria are also applied.

Figure \ref{fig:num-1} shows the results of fixed $d$ and fixed $N$. Since the choice of $\epsilon$ is dependent on $d$, the relation between the simulation costs and $d$ is not clear. Therefore, we compare the simulation costs to the theoretical bound (up to a constant)
\[ T(d,N) := N^2d^2\epsilon^{-2}\log{1/\delta}. \]
More specifically, we compare the simulation costs to $0.87T(d,N)$ in this experiment, which corresponds to the ``Theory'' curve in the figure. We can observe from the plotting that the growth of simulation costs matches our theory very well. This implies that our estimation on the performance of the truncated SSGD algorithm is tight on this example. Moreover, the optimality gap between the returned solutions and the optimal solution is smaller than $\epsilon$ for all experiments, which implies that the algorithm succeeds with high probability.

\begin{figure}[t]
\begin{center}
\begin{subfigure}{.49\textwidth}
    \centering
    \includegraphics[scale=0.5]{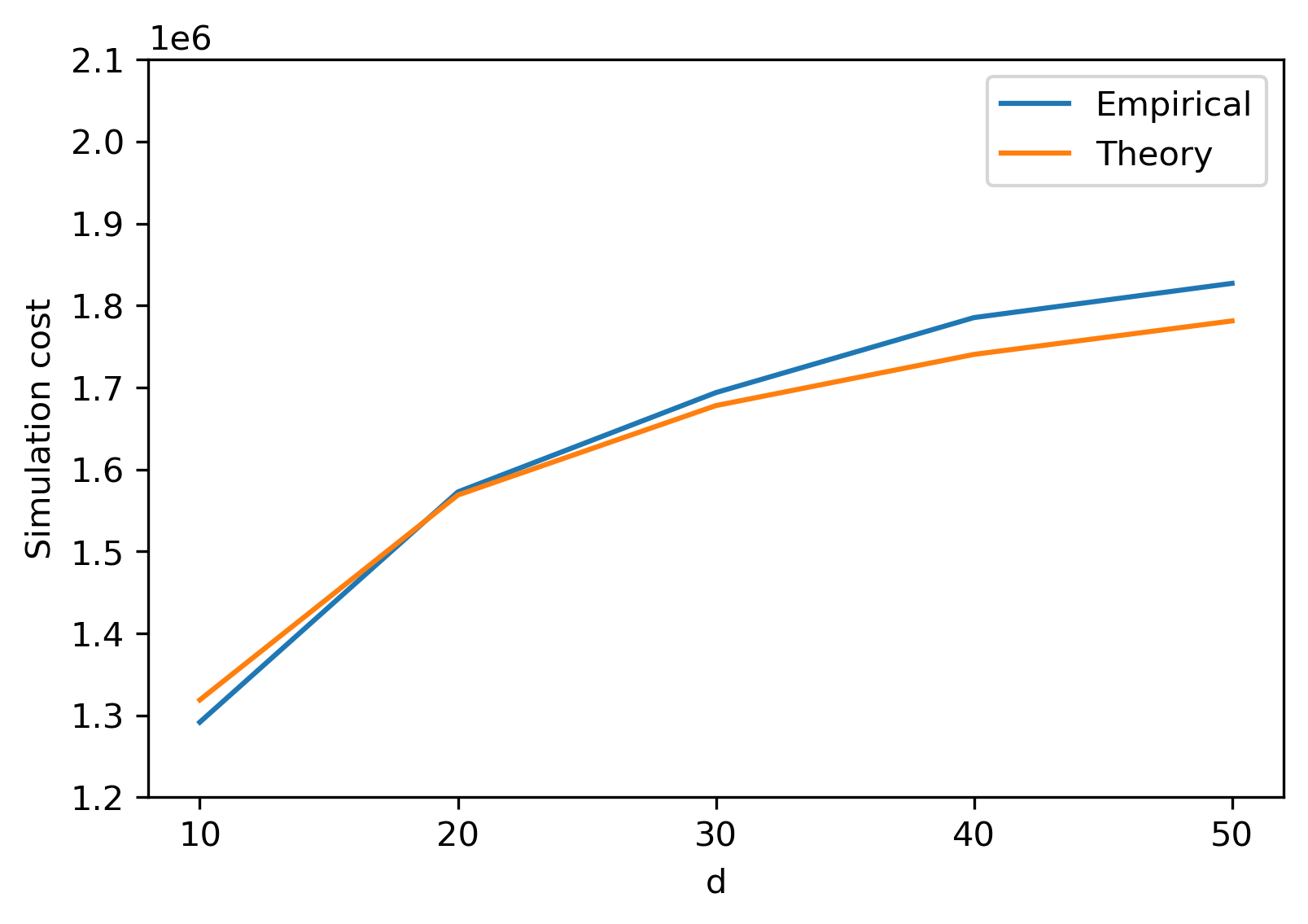}
    \caption{}
\end{subfigure}\hfill
\begin{subfigure}{.49\textwidth}
    \centering
    \includegraphics[scale=0.5]{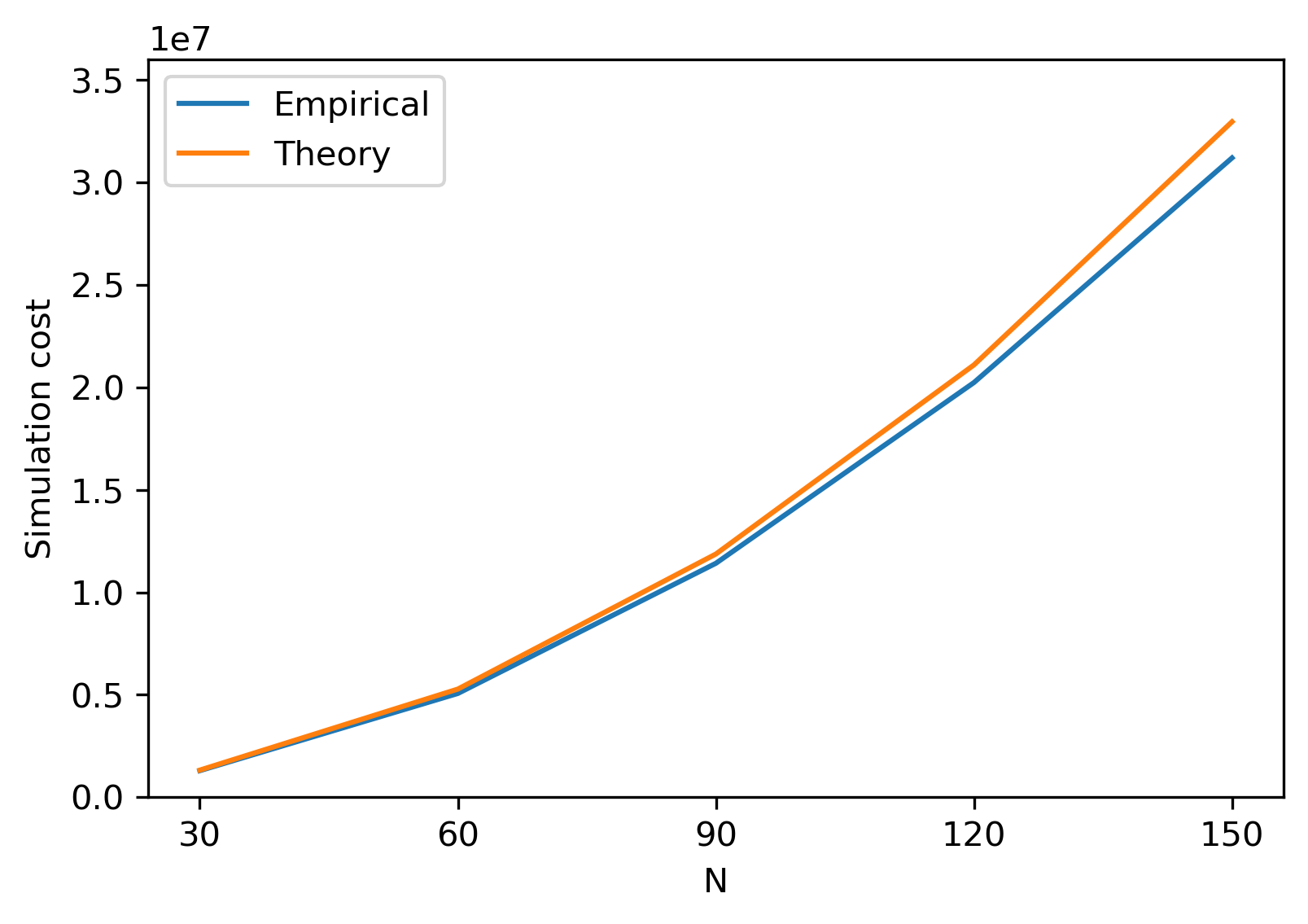}
    \caption{}
\end{subfigure}\hfill
\caption{The expected simulation costs of the separable convex minimization problem. \textbf{(a)} Expected simulation costs with $N=30$. \textbf{(b)} Expected simulation costs with $d=10$.} 
\label{fig:num-1}
\end{center}
\end{figure}

\section{Conclusion}
\label{sec:cls}
We propose computationally efficient \revise{simulation-optimization algorithms} for large-scale simulation optimization problems that have high-dimensional discrete decision space in the presence of a convex structure. For a user-specified precision level, the proposed \revise{simulation-optimization algorithms} are guaranteed to find a \revise{choice of decision variables} that is close to the optimal within the precision level with desired high probability. We provide upper bounds on simulation costs for the proposed \revise{simulation-optimization algorithms}. In this work, we mainly focus on algorithm design and theoretical guarantees. In future work, 
we seek to design better \revise{simulation-optimization algorithms} that provide simulation costs with matching upper and lower bounds. 

\section*{Acknowledgement}
We are grateful to the reviewers, the associate editor, and Shane Henderson for very helpful comments and suggestions. 

\bibliographystyle{informs2014} 
\bibliography{ref}

\begin{thebibliography}{86}
\providecommand{\natexlab}[1]{#1}
\providecommand{\url}[1]{\texttt{#1}}
\providecommand{\urlprefix}{URL }

\bibitem[{Agarwal et~al.(2011)Agarwal, Foster, Hsu, Kakade, \protect\BIBand{}
  Rakhlin}]{agarwal2011stochastic}
Agarwal A, Foster DP, Hsu DJ, Kakade SM, Rakhlin A (2011) Stochastic convex
  optimization with bandit feedback. \emph{Advances in Neural Information
  Processing Systems}, 1035--1043.

\bibitem[{Agrawal et~al.(2020)Agrawal, Juneja, \protect\BIBand{}
  Glynn}]{agrawal2020optimal}
Agrawal S, Juneja S, Glynn P (2020) Optimal $\delta$-correct best-arm selection
  for heavy-tailed distributions. \emph{Algorithmic Learning Theory}, 61--110.

\bibitem[{Ajalloeian \protect\BIBand{} Stich(2020)}]{ajalloeian2020analysis}
Ajalloeian A, Stich SU (2020) Onthe convergence of {SGD} with biased gradients.
  \emph{Workshop on ``Beyond First Order Methods in ML Systems'' at
  International Conference on Machine Learning}.

\bibitem[{Altman et~al.(2000)Altman, Gaujal, \protect\BIBand{}
  Hordijk}]{altman2000multimodularity}
Altman E, Gaujal B, Hordijk A (2000) Multimodularity, convexity, and
  optimization properties. \emph{Mathematics of Operations Research}
  25(2):324--347.

\bibitem[{Altman et~al.(2003)Altman, Gaujal, \protect\BIBand{}
  Hordijk}]{altman2003discrete}
Altman E, Gaujal B, Hordijk A (2003) \emph{Discrete-event control of stochastic
  networks: Multimodularity and regularity} ({S}pringer).

\bibitem[{Axelrod et~al.(2020)Axelrod, Liu, \protect\BIBand{}
  Sidford}]{axelrod2020near}
Axelrod B, Liu YP, Sidford A (2020) Near-optimal approximate discrete and
  continuous submodular function minimization. \emph{Proceedings of the
  Fourteenth Annual ACM-SIAM Symposium on Discrete Algorithms}, 837--853
  (SIAM).

\bibitem[{Bechhofer(1954)}]{bechhofer1954single}
Bechhofer RE (1954) A single-sample multiple decision procedure for ranking
  means of normal populations with known variances. \emph{The Annals of
  Mathematical Statistics} 16--39.

\bibitem[{Belloni et~al.(2015)Belloni, Liang, Narayanan, \protect\BIBand{}
  Rakhlin}]{belloni2015escaping}
Belloni A, Liang T, Narayanan H, Rakhlin A (2015) Escaping the local minima via
  simulated annealing: Optimization of approximately convex functions.
  \emph{Conference on Learning Theory}, 240--265.

\bibitem[{Borjesson \protect\BIBand{} Sundberg(1979)}]{borjesson1979simple}
Borjesson P, Sundberg CE (1979) Simple approximations of the error function q
  (x) for communications applications. \emph{IEEE Transactions on
  Communications} 27(3):639--643.

\bibitem[{Burke \protect\BIBand{} Ferris(1993)}]{burke1993weak}
Burke JV, Ferris MC (1993) Weak sharp minima in mathematical programming.
  \emph{SIAM Journal on Control and Optimization} 31(5):1340--1359.

\bibitem[{Chen \protect\BIBand{} Luss(2018)}]{chen2018stochastic}
Chen J, Luss R (2018) Stochastic gradient descent with biased but consistent
  gradient estimators. \emph{arXiv preprint arXiv:1807.11880} .

\bibitem[{Chen et~al.(2013)Chen, Ankenman, \protect\BIBand{}
  Nelson}]{chen2013enhancing}
Chen X, Ankenman BE, Nelson BL (2013) Enhancing stochastic kriging metamodels
  with gradient estimators. \emph{Operations Research} 61(2):512--528.

\bibitem[{Chen et~al.(2018)Chen, Zhou, \protect\BIBand{} Hu}]{chen2018discrete}
Chen X, Zhou E, Hu J (2018) Discrete optimization via gradient-based adaptive
  stochastic search methods. \emph{IISE Transactions} 50(9):789--805.

\bibitem[{Chick(2006)}]{chick2006subjective}
Chick SE (2006) Subjective probability and bayesian methodology.
  \emph{Handbooks in Operations Research and Management Science} 13:225--257.

\bibitem[{Daniilidis \protect\BIBand{}
  Drusvyatskiy(2020)}]{daniilidis2020pathological}
Daniilidis A, Drusvyatskiy D (2020) Pathological subgradient dynamics.
  \emph{SIAM Journal on Optimization} 30(2):1327--1338.

\bibitem[{Davis \protect\BIBand{} Drusvyatskiy(2019)}]{davis2018stochastic}
Davis D, Drusvyatskiy D (2019) Stochastic model-based minimization of weakly
  convex functions. \emph{SIAM Journal on Optimization} 29(1):207--239.

\bibitem[{Davis et~al.(2020)Davis, Drusvyatskiy, Kakade, \protect\BIBand{}
  Lee}]{davis2020stochastic}
Davis D, Drusvyatskiy D, Kakade S, Lee JD (2020) Stochastic subgradient method
  converges on tame functions. \emph{Foundations of Computational Mathematics}
  20(1):119--154.

\bibitem[{Devolder et~al.(2014)Devolder, Glineur, \protect\BIBand{}
  Nesterov}]{devolder2014first}
Devolder O, Glineur F, Nesterov Y (2014) First-order methods of smooth convex
  optimization with inexact oracle. \emph{Mathematical Programming}
  146(1-2):37--75.

\bibitem[{Eckman \protect\BIBand{} Henderson(2018)}]{eckman2018guarantees}
Eckman DJ, Henderson SG (2018) Guarantees on the probability of good selection.
  \emph{2018 Winter Simulation Conference (WSC)}, 351--365 (IEEE).

\bibitem[{Eckman \protect\BIBand{} Henderson(2020)}]{eckhen20}
Eckman DJ, Henderson SG (2020) Biased gradient estimators in simulation
  optimization. Bae KH, Feng B, Kim S, {Lazarova-Molnar} S, Zheng Z, Roeder T,
  Thiesing R, eds., \emph{Proceedings of the 2020 Winter Simulation
  Conference}, Submitted (Piscataway {NJ}: IEEE).

\bibitem[{Eckman \protect\BIBand{} Henderson(2021)}]{eckman2018fixed}
Eckman DJ, Henderson SG (2021) Fixed-confidence, fixed-tolerance guarantees for
  ranking-and-selection procedures. \emph{ACM Trans. Model. Comput. Simul.}
  31(2), ISSN 1049-3301.

\bibitem[{Eckman et~al.(2020)Eckman, Plumlee, \protect\BIBand{}
  Nelson}]{Nelson2020}
Eckman DJ, Plumlee M, Nelson BL (2020) Plausible screening using functional
  properties for simulations with large solution spaces, working paper.

\bibitem[{Even-Dar et~al.(2002)Even-Dar, Mannor, \protect\BIBand{}
  Mansour}]{even2002pac}
Even-Dar E, Mannor S, Mansour Y (2002) {PAC} bounds for multi-armed bandit and
  markov decision processes. \emph{International Conference on Computational
  Learning Theory}, 255--270 (Springer).

\bibitem[{Fan et~al.(2016)Fan, Hong, \protect\BIBand{}
  Nelson}]{fan2016indifference}
Fan W, Hong LJ, Nelson BL (2016) Indifference-zone-free selection of the best.
  \emph{Operations Research} 64(6):1499--1514.

\bibitem[{Favati(1990)}]{favati1990convexity}
Favati P (1990) Convexity in nonlinear integer programming. \emph{Ricerca
  Operativa} 53:3--44.

\bibitem[{Freund et~al.(2017)Freund, Henderson, \protect\BIBand{}
  Shmoys}]{freund2017minimizing}
Freund D, Henderson SG, Shmoys DB (2017) Minimizing multimodular functions and
  allocating capacity in bike-sharing systems. \emph{International Conference
  on Integer Programming and Combinatorial Optimization}, 186--198 (Springer).

\bibitem[{Fu(2002)}]{fu2002optimization}
Fu MC (2002) Optimization for simulation: Theory vs. practice. \emph{INFORMS
  Journal on Computing} 14(3):192--215.

\bibitem[{Fu \protect\BIBand{} Qu(2014)}]{fu2014regression}
Fu MC, Qu H (2014) Regression models augmented with direct stochastic gradient
  estimators. \emph{INFORMS Journal on Computing} 26(3):484--499.

\bibitem[{Fujishige(1984)}]{fujishige1984theory}
Fujishige S (1984) Theory of submodular programs: A {F}enchel-type min-max
  theorem and subgradients of submodular functions. \emph{Mathematical
  programming} 29(2):142--155.

\bibitem[{Fujishige(2005)}]{fujishige2005submodular}
Fujishige S (2005) \emph{Submodular functions and optimization} (Elsevier).

\bibitem[{Futschik \protect\BIBand{} Pflug(1995)}]{futschik1995confidence}
Futschik A, Pflug G (1995) Confidence sets for discrete stochastic
  optimization. \emph{Annals of Operations Research} 56(1):95--108.

\bibitem[{Futschik \protect\BIBand{} Pflug(1997)}]{futschik1997optimal}
Futschik A, Pflug GC (1997) Optimal allocation of simulation experiments in
  discrete stochastic optimization and approximative algorithms. \emph{European
  Journal of Operational Research} 101(2):245--260.

\bibitem[{Garivier \protect\BIBand{} Kaufmann(2016)}]{garivier2016optimal}
Garivier A, Kaufmann E (2016) Optimal best arm identification with fixed
  confidence. \emph{Conference on Learning Theory}, 998--1027.

\bibitem[{Graur et~al.(2020)Graur, Pollner, Ramaswamy, \protect\BIBand{}
  Weinberg}]{graur2020new}
Graur A, Pollner T, Ramaswamy V, Weinberg SM (2020) New query lower bounds for
  submodular function minimization. \emph{11th Innovations in Theoretical
  Computer Science Conference (ITCS 2020)} (Schloss Dagstuhl-Leibniz-Zentrum
  f{\"u}r Informatik).

\bibitem[{Gutjahr \protect\BIBand{} Pflug(1996)}]{gutjahr1996simulated}
Gutjahr WJ, Pflug GC (1996) Simulated annealing for noisy cost functions.
  \emph{Journal of Global Optimization} 8(1):1--13.

\bibitem[{Hazan \protect\BIBand{} Kale(2011)}]{hazan2011beyond}
Hazan E, Kale S (2011) Beyond the regret minimization barrier: an optimal
  algorithm for stochastic strongly-convex optimization. \emph{Proceedings of
  the 24th Annual Conference on Learning Theory}, 421--436.

\bibitem[{Hong et~al.(2021)Hong, Fan, \protect\BIBand{} Luo}]{hong2020review}
Hong LJ, Fan W, Luo J (2021) Review on ranking and selection: A new
  perspective. \emph{Frontiers of Engineering Management} 8(3):321--343.

\bibitem[{Hong \protect\BIBand{} Nelson(2006)}]{hong2006discrete}
Hong LJ, Nelson BL (2006) Discrete optimization via simulation using compass.
  \emph{Operations Research} 54(1):115--129.

\bibitem[{Hong et~al.(2010)Hong, Nelson, \protect\BIBand{}
  Xu}]{hong2010speeding}
Hong LJ, Nelson BL, Xu J (2010) Speeding up compass for high-dimensional
  discrete optimization via simulation. \emph{Operations Research Letters}
  38(6):550--555.

\bibitem[{Hong et~al.(2015)Hong, Nelson, \protect\BIBand{}
  Xu}]{hong2015discrete}
Hong LJ, Nelson BL, Xu J (2015) Discrete optimization via simulation.
  \emph{Handbook of simulation optimization}, 9--44 (Springer).

\bibitem[{Hu et~al.(2007)Hu, Fu, \protect\BIBand{} Marcus}]{hu2007model}
Hu J, Fu MC, Marcus SI (2007) A model reference adaptive search method for
  global optimization. \emph{Operations Research} 55(3):549--568.

\bibitem[{Hu et~al.(2008)Hu, Fu, Marcus et~al.}]{hu2008model}
Hu J, Fu MC, Marcus SI, et~al. (2008) A model reference adaptive search method
  for stochastic global optimization. \emph{Communications in Information \&
  Systems} 8(3):245--276.

\bibitem[{Hu et~al.(2020)Hu, Zhang, Chen, \protect\BIBand{} He}]{hu2020biased}
Hu Y, Zhang S, Chen X, He N (2020) Biased stochastic first-order methods for
  conditional stochastic optimization and applications in meta learning.
  \emph{Advances in Neural Information Processing Systems}, volume~33,
  2759--2770.

\bibitem[{Hunter \protect\BIBand{} Nelson(2017)}]{hunter2017parallel}
Hunter SR, Nelson BL (2017) Parallel ranking and selection. \emph{Advances in
  Modeling and Simulation}, 249--275 (Springer).

\bibitem[{Ito(2019)}]{ito2019submodular}
Ito S (2019) Submodular function minimization with noisy evaluation oracle.
  \emph{Advances in Neural Information Processing Systems}, 12103--12113.

\bibitem[{Jian(2017)}]{jian2017explore}
Jian N (2017) Exploring and exploiting structure in large scale simulation
  optimization. \emph{Ph. D. thesis} Operations Research and Information
  Engineering, Cornell University, Ithaca NY.

\bibitem[{Jian et~al.(2016)Jian, Freund, Wiberg, \protect\BIBand{}
  Henderson}]{jian2016simulation}
Jian N, Freund D, Wiberg HM, Henderson SG (2016) Simulation optimization for a
  large-scale bike-sharing system. \emph{2016 Winter Simulation Conference
  (WSC)}, 602--613 (IEEE).

\bibitem[{Jin et~al.(2018)Jin, Liu, Ge, \protect\BIBand{}
  Jordan}]{jin2018local}
Jin C, Liu LT, Ge R, Jordan MI (2018) On the local minima of the empirical
  risk. \emph{Advances in neural information processing systems}, 4896--4905.

\bibitem[{Kaufmann et~al.(2016)Kaufmann, Capp{\'e}, \protect\BIBand{}
  Garivier}]{kaufmann2016complexity}
Kaufmann E, Capp{\'e} O, Garivier A (2016) On the complexity of best-arm
  identification in multi-armed bandit models. \emph{The Journal of Machine
  Learning Research} 17(1):1--42.

\bibitem[{Kaufmann \protect\BIBand{}
  Kalyanakrishnan(2013)}]{kaufmann2013information}
Kaufmann E, Kalyanakrishnan S (2013) Information complexity in bandit subset
  selection. \emph{Conference on Learning Theory}, 228--251.

\bibitem[{Kim \protect\BIBand{} Nelson(2006)}]{kim2006selecting}
Kim SH, Nelson BL (2006) Selecting the best system. \emph{Handbooks in
  operations research and management science} 13:501--534.

\bibitem[{Kleywegt et~al.(2002)Kleywegt, Shapiro, \protect\BIBand{} Homem-de
  Mello}]{kleywegt2002sample}
Kleywegt AJ, Shapiro A, Homem-de Mello T (2002) The sample average
  approximation method for stochastic discrete optimization. \emph{SIAM Journal
  on Optimization} 12(2):479--502.

\bibitem[{L'Ecuyer(1990)}]{l1990unified}
L'Ecuyer P (1990) A unified view of the {IPA, SF, and LR} gradient estimation
  techniques. \emph{Management Science} 36(11):1364--1383.

\bibitem[{Lee et~al.(2015)Lee, Sidford, \protect\BIBand{} Wong}]{lee2015faster}
Lee YT, Sidford A, Wong SCw (2015) A faster cutting plane method and its
  implications for combinatorial and convex optimization. \emph{2015 IEEE 56th
  Annual Symposium on Foundations of Computer Science}, 1049--1065 (IEEE).

\bibitem[{Lim(2012)}]{lim2012stochastic}
Lim E (2012) Stochastic approximation over multidimensional discrete sets with
  applications to inventory systems and admission control of queueing networks.
  \emph{ACM Transactions on Modeling and Computer Simulation (TOMACS)}
  22(4):1--23.

\bibitem[{Lov{\'a}sz(1983)}]{lovasz1983submodular}
Lov{\'a}sz L (1983) Submodular functions and convexity. \emph{Mathematical
  programming the state of the art}, 235--257 (Springer).

\bibitem[{Luo et~al.(2015)Luo, Hong, Nelson, \protect\BIBand{}
  Wu}]{luo2017fully}
Luo J, Hong LJ, Nelson BL, Wu Y (2015) Fully sequential procedures for
  large-scale ranking-and-selection problems in parallel computing
  environments. \emph{Operations Research} 63(5):1177--1194.

\bibitem[{Ma \protect\BIBand{} Henderson(2017)}]{ma2017efficient}
Ma S, Henderson SG (2017) An efficient fully sequential selection procedure
  guaranteeing probably approximately correct selection. \emph{2017 Winter
  Simulation Conference (WSC)}, 2225--2236 (IEEE).

\bibitem[{Ma \protect\BIBand{} Henderson(2019)}]{mahen17b}
Ma S, Henderson SG (2019) Predicting the simulation budget in ranking and
  selection procedures. \emph{{ACM} Transactions on Modeling and Computer
  Simulation} 29(3):Article 14, 1--25.

\bibitem[{Mai \protect\BIBand{} Johansson(2020)}]{mai2020convergence}
Mai V, Johansson M (2020) Convergence of a stochastic gradient method with
  momentum for non-smooth non-convex optimization. \emph{International
  Conference on Machine Learning}, 6630--6639 (PMLR).

\bibitem[{Mangoubi \protect\BIBand{} Vishnoi(2018)}]{mangoubi2018convex}
Mangoubi O, Vishnoi NK (2018) Convex optimization with unbounded nonconvex
  oracles using simulated annealing. \emph{Conference On Learning Theory},
  1086--1124 (PMLR).

\bibitem[{Murota(2003)}]{murota2003discrete}
Murota K (2003) Discrete convex analysis. \emph{Society for Industrial and
  Applied Mathematics} (Citeseer).

\bibitem[{Nelson(2010)}]{nelson2010optimization}
Nelson BL (2010) Optimization via simulation over discrete decision variables.
  \emph{Risk and Optimization in an Uncertain World}, 193--207 (Informs).

\bibitem[{Nemirovskij \protect\BIBand{} Yudin(1983)}]{nemirovsky1983problem}
Nemirovskij AS, Yudin DB (1983) \emph{Problem complexity and method efficiency
  in optimization} (Wiley-Interscience).

\bibitem[{Nesterov(2018)}]{nesterov2018lectures}
Nesterov Y (2018) \emph{Lectures on convex optimization}, volume 137
  (Springer).

\bibitem[{Ni et~al.(2017)Ni, Ciocan, Henderson, \protect\BIBand{}
  Hunter}]{nietal17}
Ni EC, Ciocan DF, Henderson SG, Hunter SR (2017) Efficient ranking and
  selection in high performance computing environments. \emph{Operations
  Research} 65(3):821--836.

\bibitem[{Park \protect\BIBand{} Kim(2015)}]{park2015penalty}
Park C, Kim SH (2015) Penalty function with memory for discrete optimization
  via simulation with stochastic constraints. \emph{Operations Research}
  63(5):1195--1212.

\bibitem[{Park et~al.(2014)Park, Telci, Kim, \protect\BIBand{}
  Aral}]{park2014designing}
Park C, Telci IT, Kim SH, Aral MM (2014) Designing an optimal water quality
  monitoring network for river systems using constrained discrete optimization
  via simulation. \emph{Engineering Optimization} 46(1):107--129.

\bibitem[{Qu \protect\BIBand{} Fu(2014)}]{qu2014gradient}
Qu H, Fu MC (2014) Gradient extrapolated stochastic kriging. \emph{ACM
  Transactions on Modeling and Computer Simulation (TOMACS)} 24(4):1--25.

\bibitem[{Semelhago et~al.(2020)Semelhago, Nelson, Song, \protect\BIBand{}
  W\"{a}chter}]{semelhago2020rapid}
Semelhago M, Nelson BL, Song E, W\"{a}chter A (2020) Rapid discrete
  optimization via simulation with {Gaussian Markov random fields}.
  \emph{INFORMS journal on Computing} Articles in Advance.

\bibitem[{Sen \protect\BIBand{} Higle(2001)}]{Sen2001}
Sen S, Higle JL (2001) \emph{Stabilization of cutting plane algorithms for
  stochastic linear programming problemsStabilization of Cutting Plane
  Algorithms for Stochastic Linear Programming Problems}, 2434--2440 (Boston,
  MA: Springer US), ISBN 978-0-306-48332-5,
  \urlprefix\url{http://dx.doi.org/10.1007/0-306-48332-7_482}.

\bibitem[{Shaked \protect\BIBand{} Shanthikumar(1988)}]{shaked1988stochastic}
Shaked M, Shanthikumar JG (1988) Stochastic convexity and its applications.
  \emph{Advances in Applied Probability} 20(2):427--446.

\bibitem[{Shamir(2011)}]{shamir2011variant}
Shamir O (2011) A variant of {Azuma's} inequality for martingales with
  {sub-Gaussian} tails. \emph{arXiv preprint arXiv:1110.2392} .

\bibitem[{Singhvi et~al.(2015)Singhvi, Singhvi, Frazier, Henderson, O'Mahony,
  Shmoys, \protect\BIBand{} Woodard}]{singhvi2015predicting}
Singhvi D, Singhvi S, Frazier PI, Henderson SG, O'Mahony E, Shmoys DB, Woodard
  DB (2015) Predicting bike usage for {New York City's} bike sharing system.
  \emph{AAAI Workshop: Computational Sustainability} (Citeseer).

\bibitem[{Sun et~al.(2014)Sun, Hong, \protect\BIBand{} Hu}]{sun2014balancing}
Sun L, Hong LJ, Hu Z (2014) Balancing exploitation and exploration in discrete
  optimization via simulation through a gaussian process-based search.
  \emph{Operations Research} 62(6):1416--1438.

\bibitem[{Wang et~al.(2013)Wang, Pasupathy, \protect\BIBand{}
  Schmeiser}]{wang2013integer}
Wang H, Pasupathy R, Schmeiser BW (2013) Integer-ordered simulation
  optimization using r-spline: Retrospective search with piecewise-linear
  interpolation and neighborhood enumeration. \emph{ACM Transactions on
  Modeling and Computer Simulation (TOMACS)} 23(3):1--24.

\bibitem[{Wang et~al.(2020)Wang, Xu, Hu, \protect\BIBand{}
  Chen}]{wang2020optimal}
Wang T, Xu J, Hu JQ, Chen CH (2020) Optimal computing budget allocation for
  regression with gradient information. \emph{to appear} .

\bibitem[{Wolff \protect\BIBand{} Wang(2002)}]{wolff2002convexity}
Wolff RW, Wang CL (2002) On the convexity of loss probabilities. \emph{Journal
  of applied probability} 402--406.

\bibitem[{Xu et~al.(2010)Xu, Nelson, \protect\BIBand{} Hong}]{xu2010industrial}
Xu J, Nelson BL, Hong JL (2010) Industrial strength compass: A comprehensive
  algorithm and software for optimization via simulation. \emph{ACM
  Transactions on Modeling and Computer Simulation (TOMACS)} 20(1):1--29.

\bibitem[{Xu et~al.(2016)Xu, Lin, \protect\BIBand{} Yang}]{xu2016accelerated}
Xu Y, Lin Q, Yang T (2016) Accelerated stochastic subgradient methods under
  local error bound condition. \emph{arXiv preprint arXiv:1607.01027} .

\bibitem[{Yang \protect\BIBand{} Lin(2018)}]{yang2018rsg}
Yang T, Lin Q (2018) {RSG}: Beating subgradient method without smoothness and
  strong convexity. \emph{The Journal of Machine Learning Research}
  19(1):236--268.

\bibitem[{Zhang et~al.(2020)Zhang, Lin, Jegelka, Sra, \protect\BIBand{}
  Jadbabaie}]{zhang2020complexity}
Zhang J, Lin H, Jegelka S, Sra S, Jadbabaie A (2020) Complexity of finding
  stationary points of nonconvex nonsmooth functions. \emph{Proceedings of the
  37th International Conference on Machine Learning}, volume 119 of
  \emph{Proceedings of Machine Learning Research}, 11173--11182 (PMLR).

\bibitem[{Zhang \protect\BIBand{} He(2018)}]{zhang2018convergence}
Zhang S, He N (2018) On the convergence rate of stochastic mirror descent for
  nonsmooth nonconvex optimization. \emph{arXiv preprint arXiv:1806.04781} .

\bibitem[{Zhong \protect\BIBand{} Hong(2018)}]{zhong2018fully}
Zhong Y, Hong LJ (2018) Fully sequential ranking and selection procedures with
  pac guarantee. \emph{2018 Winter Simulation Conference (WSC)}, 1898--1908
  (IEEE).

\bibitem[{Zhong \protect\BIBand{} Hong(2021)}]{zhong2021knockout}
Zhong Y, Hong LJ (2021) Knockout-tournament procedures for large-scale ranking
  and selection in parallel computing environments. \emph{Operations Research}
  .

\bibitem[{Zinkevich(2003)}]{zinkevich2003online}
Zinkevich M (2003) Online convex programming and generalized infinitesimal
  gradient ascent. \emph{Proceedings of the 20th International Conference on
  Machine Learning (icml-03)}, 928--936.

\end{thebibliography}

\ECSwitch


\ECHead{Supplementary Material -- Proofs of Statements}

\revise{
\section{More numerical experiments}

\subsection{Illustrations of the \lovasz extension}

In this subsection, we show the \lovasz extension of a two-dimensional function on $[3]^2=\{1,2,3\}^2$. We consider the quadratic function
\[ f(x) := x^T \begin{bmatrix} 0.101 & -0.068\\ -0.068 & 0.146 \end{bmatrix} x,\quad\forall x\in\mathbb{R}^2. \]
By the results in \citet[Section 7.3]{murota2003discrete}, we know the function $f(\cdot)$ is a $L^\natural$-convex function. We compare the landscapes of the original objective and the \lovasz extension in Figure \ref{fig:landscape}. We can see that the \lovasz extension is a piecewise linear and convex function, which is consistent with the results in Section \ref{sec:multi-general} and \citet{murota2003discrete}.
}

\begin{figure}[h]
\begin{center}
    \includegraphics[scale=0.7]{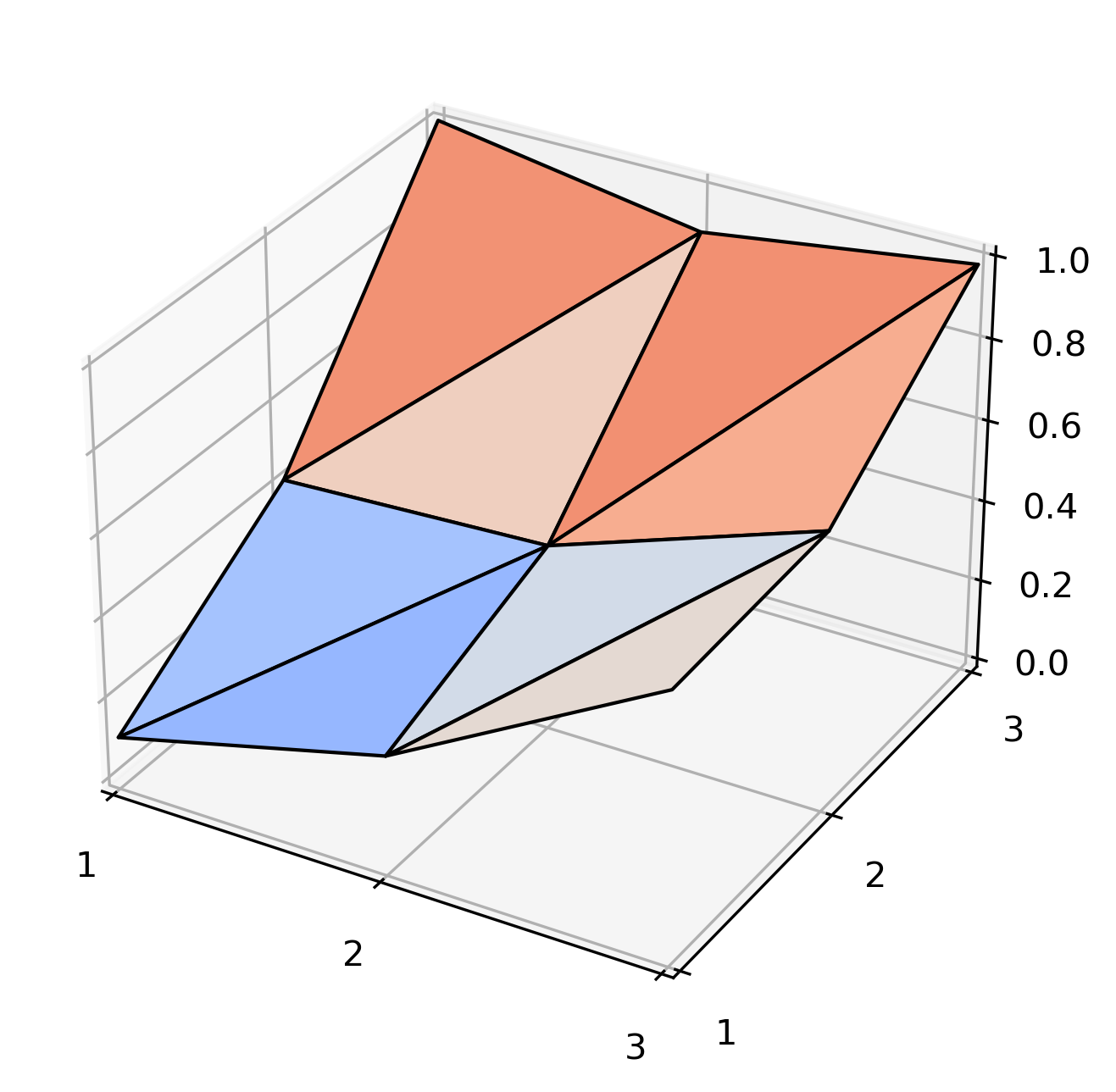}
\caption{The \lovasz extension of the objective function. } 
\label{fig:landscape}
\end{center}
\end{figure}

\section{Proofs in Section \ref{sec:multi-sfm}}

\subsection{Proof of Theorem \ref{thm:round}}
\label{ec:round}
\begin{proof}{Proof of Theorem \ref{thm:round}.}
We denote the optimal value of $f(x)$ as $f^*$. Since point $\bar{x}$ satisfies the $(\epsilon/2,\delta/2)$-PGS guarantee, we have
\[ \tilde{f}(\bar{x}) - f^* \leq \epsilon / 2 \]
holds with probability at least $1-\delta/2$. We assume this event happens in the following of this proof. Let $S^0,S^1,\dots,S^d$  be the neighboring points of $\bar{x}$. Using the expression of the \lovasz extension in \eqref{eqn:submodular-1}, we know there exists an $\epsilon/2$-optimal solution among $S^0,S^1,\dots,S^d$. We denote the $\epsilon/2$-optimal solution and the solution returned by Algorithm \ref{alg:multi-dim-round} as $S^*$ and $\hat{S}$, respectively. \revise{By the definition of confidence intervals, we know
\[ \left| \hat{F}_n(S_i) - f(S_i) \right| \leq \epsilon / 4 ,\quad \forall i\in\{0,\dots,d\},\quad \left| \hat{F}_n(\hat{S}) - f(\hat{S}) \right| \leq \epsilon / 4 \]
holds uniformly with probability at least $1-\delta/2$.} Under this event, we know
\[ f(\hat{S}) - f^* \leq \hat{F}_n(\hat{S}) - f^* + \epsilon / 4 \leq \hat{F}_n(S^*) - f^* + \epsilon / 4 \leq f(S^*) - f^* + \epsilon/2 \leq \epsilon, \]
which implies that $x^*\in\mathcal{X}$ is an $\epsilon$-optimal solution and the probability is at least $1-\delta/2-\delta/2 = 1-\delta$. Hence, we know $x^*$ is an $(\epsilon,\delta)$-PGS solution to problem \eqref{eqn:obj}.

Now, we estimate the simulation cost of Algorithm \ref{alg:multi-dim-round}. \revise{By Hoeffding bound, simulating
\[ \frac{32}{\epsilon^2}\log\left(\frac{8d}{\delta}\right) \]
times on each neighboring point is enough to achieve $1-\delta/(4d)$ confidence half-width $\epsilon/4$. Hence, the simulation cost of Algorithm \ref{alg:multi-dim-round} is at most
\[ \frac{32(d+1)}{\epsilon^2}\log\left(\frac{8d}{\delta}\right) = O\left[ \frac{d}{\epsilon^2}\log\left(\frac{d}{\delta}\right) \right] = \tilde{O}\left[ \frac{d}{\epsilon^2}\log\left(\frac{1}{\delta}\right) \right]. \]
}
\hfill\Halmos\end{proof}

\subsection{Proof of Theorem \ref{thm:ssgd1}}
\label{ec:ssgd1}

The following Azuma's inequality for martingales with sub-Gaussian tails plays as a major role for deriving high-probability bounds, \revise{i.e., the number of required samples to ensure the algorithms succeed with high probability}.
\begin{lemma}[Azuma’s inequality for sub-Gaussian tails \citep{shamir2011variant}]\label{lem:azuma-subgaussian}
Let $X_0,\dots,X_{T-1}$ be a martingale difference sequence. Suppose there exist constants $b_1\geq1,b_2>0$ such that, for any $t\in\{0,\dots,T-1\}$,
\begin{align}\label{eqn:sub-gaussian}
    \mathbb{P}(|X_t| \geq a~|~X_1,\dots,X_{t-1}) \leq 2b_1\exp(-b_2 a^2),\quad\forall a\geq0.
\end{align}
Then for any $\delta>0$, it holds with probability at least $1-\delta$ that
\[ \frac{1}{T} \sum_{t=0}^{T-1}~ X_t \leq \sqrt{\frac{28b_1}{b_2T}\log\left(\frac{1}{\delta}\right)}. \]
\end{lemma}

Since the stochastic subgradient $\hat{g}^t$ is truncated, the stochastic subgradient used for updating, namely $\tilde{g}^t$, is not unbiased. We define the bias at each step as
\[ b_t := \mathbb{E}\left[ \tilde{g}^t~|~x^0,x^1,\dots,x^t \right] - g^t,\quad\forall t \in \{0,1,\dots,T-1\}. \]
First, we bound the $\ell_1$-norm of the bias.
\begin{lemma}\label{lem:ssgd-1}
Suppose that Assumptions \ref{asp:1}-\ref{asp:5} hold. If we have
\[ M \geq 2\sigma \cdot\sqrt{\log\left(\frac{4\sigma dT}{\epsilon}\right)} = \Theta\left[\sqrt{\log\left(\frac{dT}{\epsilon}\right)}\right],\quad T\geq \frac{2\epsilon}{\sigma}, \]
then it holds
\[ \| b^t \|_1 \leq \frac{\epsilon}{2T} ,\quad \forall t \in\{0,1,\dots,T-1\}. \]
\end{lemma}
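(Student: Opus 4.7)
\medskip

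\textbf{Proof proposal.} The plan is to bound each component $|b^t_i|$ separately by a tail expectation, then sum the $d$ coordinates. Since $\hat{g}^t$ is unbiased conditional on $x^0,\ldots,x^t$, we can rewrite
\[
 b^t_i \;=\; \mathbb{E}\bigl[\mathcal{T}_M(\hat{g}^t_i) - \hat{g}^t_i \,\big|\, x^0,\ldots,x^t\bigr],
\]
and because truncation only shrinks magnitudes toward the origin, $|\mathcal{T}_M(\hat g^t_i) - \hat g^t_i| = (|\hat g^t_i| - M)_+$. Hence $|b^t_i| \le \mathbb{E}[(|\hat g^t_i| - M)_+]$, which reduces the problem to controlling the upper tail of each component of $\hat g^t$.

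The key distributional fact is that by construction \eqref{eqn:stochastic-subgrad}, the random variable $\hat{g}^t_{\alpha_x(i)}$ is the difference of two independent sub-Gaussians with parameter $\sigma^2$ (Assumption~\ref{asp:3}), so it is sub-Gaussian with parameter $2\sigma^2$ around its mean $g^t_{\alpha_x(i)}$; by Assumption~\ref{asp:5} the mean satisfies $|g^t_{\alpha_x(i)}| \le L$. Writing $\hat g^t_i = g^t_i + Z_i$ with $Z_i$ centered sub-Gaussian, one has for $M \ge L$ the sandwich $(|\hat g^t_i| - M)_+ \le (|Z_i| - (M-L))_+$, and a standard tail-integration argument gives
\[
 \mathbb{E}[(|Z_i| - a)_+] \;=\; \int_a^\infty \mathbb{P}(|Z_i|>t)\,dt \;\le\; \int_a^\infty 2\,\exp\!\bigl(-t^2/(4\sigma^2)\bigr)\,dt \;\le\; \tfrac{4\sigma^2}{a}\exp\!\bigl(-a^2/(4\sigma^2)\bigr)
\]
for every $a>0$. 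Taking $a = M-L$ and using the hypothesis $M \ge 2\sigma\sqrt{\log(4\sigma dT/\epsilon)}$ (so that $M^2/(4\sigma^2)$ absorbs $\log(4\sigma dT/\epsilon)$, up to the shift by $L$ which appears only in a lower-order term controlled by the condition $T \ge 2\epsilon/\sigma$) drives the right-hand side below $\epsilon/(2dT)$ for each coordinate.

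Summing over the $d$ coordinates then yields $\|b^t\|_1 \le d \cdot \epsilon/(2dT) = \epsilon/(2T)$, which is the claim. The main technical obstacle is bookkeeping of the constants: the shift by the mean $g^t_i$ (bounded by $L$) must be absorbed into the exponent without blowing up the threshold $M$ beyond the stated logarithmic order, and one must verify that the auxiliary condition $T \ge 2\epsilon/\sigma$ is precisely what is needed to keep $\log(4\sigma dT/\epsilon)$ positive and to render the $L$-dependent correction negligible so that the explicit constant $2\sigma$ in $M$ suffices. Everything else is a routine sub-Gaussian tail computation followed by the union bound over the $d$ coordinates.
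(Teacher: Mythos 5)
Your core computation is the same as the paper's: bound each coordinate of the bias by the tail integral of the noise in $\hat g^t_i$ (a difference of two independent $\sigma^2$-sub-Gaussians, hence sub-Gaussian with parameter $2\sigma^2$), use $\int_a^\infty e^{-t^2/4\sigma^2}\,dt\le \frac{2\sigma^2}{a}e^{-a^2/4\sigma^2}$, choose $M$ so that each coordinate contributes at most $\epsilon/(2dT)$, and sum over the $d$ coordinates. The only substantive difference is where the truncation threshold sits relative to the mean: the paper writes $b^t_{\alpha_t(i)}$ as the bias of $\mathcal{T}_M$ applied to the \emph{centered} noise $Y$ at level $M$, whereas you keep the truncation on $\hat g^t_i=g^t_i+Z_i$ and reduce to $\mathbb{E}\bigl[(|Z_i|-(M-L))_+\bigr]$. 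Your reduction $|b^t_i|\le\mathbb{E}[(|\hat g^t_i|-M)_+]$ via unbiasedness is correct and is in fact the more faithful accounting.

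But it is exactly this extra care that leaves a genuine gap. Under the stated hypothesis $M\ge 2\sigma\sqrt{\log(4\sigma dT/\epsilon)}$ alone, $M-L$ can be arbitrarily small or negative, so $\frac{4\sigma^2}{M-L}\exp\bigl(-(M-L)^2/4\sigma^2\bigr)$ need not fall below $\epsilon/(2dT)$; your chain really requires something like $M\ge L+2\sigma\sqrt{\log(4\sigma dT/\epsilon)}$, which is not assumed. Your proposed resolution is also a misdiagnosis: the condition $T\ge 2\epsilon/\sigma$ has nothing to do with absorbing $L$ --- in the paper it serves only to guarantee $4\sigma dT/\epsilon\ge 8d$, hence $M\ge 2\sigma\sqrt{\log(8d)}\ge 2\sigma$, which is what permits replacing $2\sigma^2/M$ by $\sigma$ in the tail bound. (The paper itself sidesteps the shift by implicitly identifying $\mathcal{T}_M(g_i+Z_i)-g_i$ with $\mathcal{T}_M(Z_i)$, exact only when $g_i=0$; and even the downstream choice $M=\max\{L,2\sigma\sqrt{\cdots}\}$ does not make $M-L$ large. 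So you have located a real subtlety, but as written your argument does not close it, and the auxiliary condition you invoke cannot.)
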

\begin{proof}{Proof.}{}
Let $\alpha_{t}$ be a consistent permutation of $x^t$ and $S^{t,i}$ be the corresponding $i$-th neighboring points. We only need to prove
\[ \left| b^t_{\alpha_t(i)} \right| \leq \frac{\epsilon}{2dT} ,\quad\forall i\in[d]. \]
We define two random variables
\[ Y_1 := F\left( S^{t,i}, \xi_i^1 \right) - f\left( S^{t,i} \right),\quad Y_2 := F\left( S^{t,i-1}, \xi_{i-1}^2 \right) - f\left( S^{t,i-1} \right). \]
By Assumption \ref{asp:1}, both $Y_1$ and $Y_2$ are independent and sub-Gaussian with parameter $\sigma^2$. Hence, we know 
\begin{align*} 
b^t_{\alpha_t(i)} &= \mathbb{E}\left[ \tilde{g}^t_{\alpha_t(i)} - g^t_{\alpha_t(i)} \right] = \mathbb{E}\left[ (Y_1 + Y_2)\cdot\mathbf{1}_{ -M \leq Y_1 + Y_2 \leq M } \right] + \mathbb{E}\left[ M\cdot\mathbf{1}_{ Y_1 + Y_2 > M } \right] + \mathbb{E}\left[ -M\cdot\mathbf{1}_{ Y_1 + Y_2 < -M } \right]\\
& = \mathbb{E}\left[ (M - Y_1 - Y_2)\cdot\mathbf{1}_{ Y_1 + Y_2 > M } \right] + \mathbb{E}\left[ -(M + Y_1 + Y_2)\cdot\mathbf{1}_{ Y_1 + Y_2 < -M } \right],
\end{align*}
where the second step is from $\mathbb{E}[Y_1] = \mathbb{E}[Y_2]=0$. Taking the absolute value on both sides, we get
\begin{align}\label{eqn:bound-ssgd-1} 
\left| b^t_{\alpha_t(i)} \right| &\leq \mathbb{E}\left[ (Y_1 + Y_2 - M)\cdot\mathbf{1}_{ Y_1 + Y_2 > M } \right] + \mathbb{E}\left[ -(M + Y_1 + Y_2)\cdot\mathbf{1}_{ Y_1 + Y_2 < -M } \right]\\
\nonumber&= \mathbb{E}\left[ (Y - M)\cdot\mathbf{1}_{ Y > M } \right] + \mathbb{E}\left[ -(Y + M)\cdot\mathbf{1}_{ Y < -M } \right],
\end{align}
where we define the random variable $Y := Y_1+Y_2$. Since $Y_1,Y_2$ are independent, random variable $Y$ is sub-Gaussian with parameter $2\sigma^2$. Let $F(y) := \mathbb{P}[Y \leq y]$ be the distribution function of $Y$. Then, we have
\begin{align}\label{eqn:bound-ssgd-2}
\mathbb{E}\left[ (Y - M)\cdot\mathbf{1}_{ Y > M } \right] &= \int_{M}^\infty~(y-M)~d F(y) = \int_M^\infty~(1 - F(y))~dy.
\end{align}
By the Hoeffding bound, we know
\[ 1 - F(y) = \mathbb{P}[Y > y] \leq \exp\left(-y^2 / 4 \sigma^2\right),\quad\forall y \geq 0. \]
Using the upper bound for $Q$-function in~\citet{borjesson1979simple}, it holds that
\[ \int_M^\infty~1 - F(y)~dy \leq \int_M^\infty~\exp\left(-y^2 / 4 \sigma^2\right)~dy \leq \frac{2\sigma^2}{M}\exp\left(-\frac{M^2}{4\sigma^2}\right).  \]
By the choice of $M$, we know
\[ M \geq 2\sigma\sqrt{\log(8d)} \geq 2\sigma \quad \text{and}\quad \sigma\exp(-M^2/4\sigma^2) \leq \frac{\epsilon}{4dT}. \]
which implies that
\[ \int_M^\infty~1 - F(y)~dy \leq \frac{2\sigma^2}{M}\exp(-M^2/4\sigma^2) \leq \frac{\epsilon}{4dT}. \]
Substituting the above inequality into \eqref{eqn:bound-ssgd-2}, we have
\[ \mathbb{E}\left[ (Y - M)\cdot\mathbf{1}_{ Y > M } \right] \leq \frac{\epsilon}{4dT}. \]
Considering $-Y$ in the same way, we can prove
\[ \mathbb{E}\left[ -(Y + M)\cdot\mathbf{1}_{ Y < -M } \right] \leq \frac{\epsilon}{4dT}. \]
Substituting the last two estimates into inequality \eqref{eqn:bound-ssgd-1}, we know
\[ \left|b^t_{\alpha_t(i)}\right| \leq \frac{\epsilon}{2dT}. \]
\hfill\Halmos\end{proof}
Next, we show that $\langle g^t+b^t - \tilde{g}^t, x^t - x^* \rangle$ forms a martingale sequence and use Azuma's inequality to bound the deviation, where $x^*$ is a minimizer of $f(x)$.
\begin{lemma}\label{lem:ssgd-2}
Suppose that Assumptions \ref{asp:1}-\ref{asp:5} hold and let $x^*$ be a minimizer of $f(x)$. The sequence
\[ X_t := \left\langle g^t+b^t - \tilde{g}^t, x^t - x^* \right\rangle\quad t = 0,1,\dots,T-1 \]
forms a martingale difference sequence. Furthermore, if we have
\[ M = \max\left\{ L, 2\sigma \cdot\sqrt{\log\left(\frac{4\sigma dT}{\epsilon}\right)} \right\} = \tilde{\Theta}\left[\sqrt{\log\left(\frac{dT}{\epsilon}\right)}\right],\quad T \geq \frac{2\epsilon}{\sigma}, \]
then it holds
\[ \frac{1}{T}\sum_{t=0}^{T-1}~X_t \leq \sqrt{\frac{224d\sigma^2}{T}\log\left(\frac{1}{\delta}\right)} \]
with probability at least $1-\delta$.
\end{lemma}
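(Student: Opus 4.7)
{Proof proposal.}
The plan has two parts, matching the two claims of the lemma.

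First I would establish the martingale difference property. Let $\mathcal{F}_t$ denote the $\sigma$-algebra generated by $x^0,\ldots,x^t$ (equivalently, by the stochastic subgradients used through step $t-1$). By construction, the permutation $\alpha_t$ and the true subgradient $g^t$ are $\mathcal{F}_t$-measurable, and by the definition of the bias, $\mathbb{E}[\tilde g^t\mid\mathcal{F}_t]=g^t+b^t$. Hence $\mathbb{E}[X_t\mid\mathcal{F}_t]=\langle g^t+b^t-\mathbb{E}[\tilde g^t\mid\mathcal{F}_t],x^t-x^*\rangle=0$, which gives the martingale difference property directly.

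Next I would control the conditional tails of $X_t$ and invoke Lemma~\ref{lem:azuma-subgaussian}. Conditional on $\mathcal{F}_t$, the permutation $\alpha_t$ is fixed, and the samples $\xi_i^1,\xi_{i-1}^2$ used in \eqref{eqn:stochastic-subgrad} are independent across the $d$ components (this is exactly the independence claim recorded after \eqref{eqn:stochastic-subgrad}). Each component $\hat g^t_{\alpha_t(i)}$ is the difference of two independent $\sigma^2$-sub-Gaussian variables, so $\hat g^t_{\alpha_t(i)}-g^t_{\alpha_t(i)}$ is sub-Gaussian with parameter $2\sigma^2$. Because the choice $M\ge L$ guarantees $|g^t_{\alpha_t(i)}|\le L\le M$, the map $y\mapsto\mathcal{T}_M(y)-g^t_{\alpha_t(i)}$ is $1$-Lipschitz with value $0$ at $y=g^t_{\alpha_t(i)}$, so passing through this contraction and re-centering keeps each summand sub-Gaussian with parameter at most a constant multiple of $\sigma^2$ (in particular, at most $4\sigma^2$ after one accounts for the re-centering). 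Therefore, conditional on $\mathcal{F}_t$,
\begin{equation*}
X_t=\sum_{i=1}^d (x^t_{\alpha_t(i)}-x^*_{\alpha_t(i)})\bigl(\mathbb{E}[\tilde g^t_{\alpha_t(i)}\mid\mathcal{F}_t]-\tilde g^t_{\alpha_t(i)}\bigr)
\end{equation*}
is a sum of independent sub-Gaussian variables whose parameters combine to at most $4\sigma^2\sum_i (x^t_i-x^*_i)^2\le 4\sigma^2 d$, where I used $\|x^t-x^*\|_\infty\le 1$ for iterates in $[0,1]^d$. Consequently the tail bound \eqref{eqn:sub-gaussian} holds with $b_1=1$ and $b_2=1/(8d\sigma^2)$.

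Finally I would plug these parameters into Lemma~\ref{lem:azuma-subgaussian}, yielding
\begin{equation*}
\frac{1}{T}\sum_{t=0}^{T-1}X_t\le\sqrt{\frac{28\cdot 8\,d\sigma^2}{T}\log(1/\delta)}=\sqrt{\frac{224\,d\sigma^2}{T}\log(1/\delta)}
\end{equation*}
with probability at least $1-\delta$, as required. The main obstacle is the sub-Gaussian control of the \emph{centered truncated} components: one must argue that truncation at level $M\gtrsim L+\sigma\sqrt{\log(dT/\epsilon)}$ does not blow up the sub-Gaussian parameter from $\sigma^2$ (the noise scale) to $M^2$ (the pointwise bound), which is exactly what the Lipschitz-contraction argument above is designed to avoid; getting the constant right here is what produces the numerical constant $224$ in the final bound.
\hfill\Halmos
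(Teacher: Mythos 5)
Your proof is correct in substance but takes a genuinely different route from the paper's in the tail-control step. The paper bounds $|X_t|\le\|g^t+b^t-\tilde g^t\|_1\,\|x^t-x^*\|_\infty$ by H\"older, strips the truncation via $|\tilde g^t_i-g^t_i|\le|\hat g^t_i-g^t_i|$ (using $M\ge L$), controls the bias separately through Lemma~\ref{lem:ssgd-1} to get $\|b^t\|_1\le\epsilon/(2T)$, and then handles the residual additive $\epsilon/(2T)$ term by a case split on $a\gtrless\epsilon/T$ — this is precisely where the hypotheses $M\ge 2\sigma\sqrt{\log(4\sigma dT/\epsilon)}$ and $T\ge 2\epsilon/\sigma$ enter. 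You instead expand $X_t$ componentwise with weights $x^t_i-x^*_i$, exploit the conditional independence of the components, and center each truncated coordinate at its conditional mean $g^t_i+b^t_i$, so the bias never appears as a separate term and those two hypotheses are not needed for this lemma at all; what your route buys is a cleaner argument, at the price of having to show that the \emph{centered truncated} coordinate is sub-Gaussian with a controlled parameter. That last step is the one soft spot: the Lipschitz-contraction bound gives $|\mathcal{T}_M(\hat g^t_i)-g^t_i|\le|\hat g^t_i-g^t_i|$, but after re-centering at $\mathbb{E}[\tilde g^t_i\mid\mathcal{F}_t]$ you pick up an additive constant of order $\sigma$ in the tail, so turning the resulting shifted tail into the clean form \eqref{eqn:sub-gaussian} with $b_1=1$ and $b_2=1/(8d\sigma^2)$ still requires either a small-$a$ case split of the same kind the paper performs or accepting $b_1>1$ (which Lemma~\ref{lem:azuma-subgaussian} tolerates but which would perturb the constant $224$). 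Since the paper's own bookkeeping of sub-Gaussian parameters here is equally loose at the level of absolute constants, this does not undermine your argument, but you should either carry out that case split or state the final constant as universal rather than exactly $224$.
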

\begin{proof}{Proof.}
Let $\mathcal{F}_t$ be the filtration generated by $x_0,x_1,\dots,x_t$. By the definition of $b^t$, we know
\[ \mathbb{E}\left[ g^t + b^t - \tilde{g}^t~|~\mathcal{F}_t \right] = 0, \]
which implies that
\[ \mathbb{E}\left[ X_t ~|~ \mathcal{F}_t \right] = \left\langle \mathbb{E}\left[ g^t + b^t - \tilde{g}^t~|~\mathcal{F}_t \right], x^t - x^* \right\rangle = 0. \]
Hence, the sequence $\{X_t\}$ is a martingale difference sequence. Next, we estimate the probability $\mathbb{P}[ |X_t| \geq a~|~\mathcal{F}_t ]$. We have the bound
\begin{align*} 
|X_t| &= \left| \left\langle g^t+b^t - \tilde{g}^t, x^t - x^* \right\rangle \right| \leq \left\| g^t+b^t - \tilde{g}^t \right\|_1 \left\| x^t - x^* \right\|_\infty \leq \left\| g^t+b^t - \tilde{g}^t \right\|_1 \leq \left\| g^t - \tilde{g}^t \right\|_1  + \left\|b^t  \right\|_1.
\end{align*}
Since $M$ satisfies the condition in Lemma \ref{lem:ssgd-1}, we know $\left\|b^t  \right\|_1 \leq \epsilon / 2T$. Recalling Assumption \ref{asp:5}, we get $| g^t_i | \leq L$ for all $i\in[d]$. By the truncation rule and the assumption $M \geq L$, we have
\[ \left| \tilde{g}^t_i - g^t_i \right| = \left| (\hat{g}^t_i \wedge M ) \vee (-M) - g^t_i \right| \leq \left| \hat{g}^t - g^t \right|,\quad\forall i\in[d]. \]
Hence, we get
\begin{align}\label{eqn:bound-ssgd-3} |X_t| \leq \frac{\epsilon}{2T} + \left\| \hat{g}^t - g^t \right\|_1. \end{align}
Define random variables $Y_i := \left|\hat{g}^t_i - g^t_i \right|$ for all $i\in[d]$. By Assumption \ref{asp:1}, $Y_i$ is sub-Gaussian with parameter $\sigma^2$. Hence, we have
\[ Y := \left\| \hat{g}^t - g^t \right\|_1 = \sum_{i=1}^{d}~Y_i \]
is sub-Gaussian with parameter $d\sigma^2$. First, we consider the case when $a \geq \epsilon/T$. Using inequality \eqref{eqn:bound-ssgd-3}, it follows that
\begin{align}\label{eqn:bound-ssgd-4} \mathbb{P}\left[ |X_t| \geq a ~|~ \mathcal{F}_\sigma \right] \leq \mathbb{P}\left[ \frac{\epsilon}{2T} + Y \geq a \right] \leq \mathbb{P}\left[ Y \geq a - \frac{\epsilon}{2T} \right] \leq \mathbb{P}\left[ Y \geq \frac{a}{2} \right] \leq 2\exp\left( -\frac{a^2}{8d\sigma^2} \right), \end{align}
where the last inequality is from Hoeffding bound. In this case, we know condition \eqref{eqn:sub-gaussian} holds with
\[ b_1 = 1,\quad b_2 = \frac{1}{8d\sigma^2}. \]
Now, we consider the case when $a < \epsilon / T$. In this case, by the assumption that $T\geq 2\epsilon/\sigma$, we have
\[ 2b_1 \exp\left( -b_2a^2 \right) > 2\exp\left( -\frac{1}{8d\sigma^2}\cdot \frac{\epsilon^2}{T^2} \right) \geq 2\exp\left( -\frac{1}{32d}\right) \geq 2\exp\left( -\frac{1}{32}\right) > 1. \]
Hence, it holds
\[ \mathbb{P}\left[ |X_t| \geq a ~|~ \mathcal{F}_\sigma \right] \leq 1 < 2b_1 \exp\left( -b_2a^2 \right). \]
Combining with inequality \eqref{eqn:bound-ssgd-4}, we know condition \eqref{eqn:sub-gaussian} holds with $b$ and $c$ defined above. Using Lemma \ref{lem:azuma-subgaussian}, we know
\[ \frac{1}{T}\sum_{t=0}^{T-1} X_t \leq \sqrt{\frac{224d\sigma^2}{T}\log\left(\frac{1}{\delta}\right)} \]
holds with probability at least $1-\delta$.
\hfill\Halmos\end{proof}
Then, we prove a lemma similar to the Lemma in \cite{zinkevich2003online}.
\begin{lemma}\label{lem:ssgd-3}
Suppose that Assumptions \ref{asp:1}-\ref{asp:5} hold and let $x^*$ be a minimizer of $f(x)$. If we choose
\[ \eta = \frac{1}{M\sqrt{T}}, \]
then we have
\[ \frac{1}{T}\sum_{t=0}^{T-1} ~\langle \tilde{g}^t, x^t - x^*\rangle \leq \frac{dM}{\sqrt{T}}. \]
\end{lemma}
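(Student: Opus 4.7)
The plan is to follow the classical projected-stochastic-subgradient analysis of Zinkevich, adapted to our $\ell_\infty$-truncated setting. The only new ingredients are that the truncation bound $\|\tilde g^t\|_\infty \le M$ replaces the usual bound on $\|\hat g^t\|_2$, and that the feasible set is the unit cube $[0,1]^d$ so that distances to the optimum are controlled dimension-wise.

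First, I would invoke the non-expansiveness of the orthogonal projection $\mathcal{P}$ onto the convex set $[0,1]^d$ (which contains $x^*$ since $x^*\in\{0,1\}^d$) to obtain
\[
\|x^{t+1}-x^*\|_2^2 \;=\; \|\mathcal{P}(x^t-\eta\tilde g^t)-x^*\|_2^2 \;\le\; \|x^t-\eta\tilde g^t-x^*\|_2^2.
\]
Expanding the right-hand side and rearranging gives the one-step descent inequality
\[
\langle \tilde g^t,\,x^t-x^*\rangle \;\le\; \frac{\|x^t-x^*\|_2^2-\|x^{t+1}-x^*\|_2^2}{2\eta} \;+\; \frac{\eta}{2}\,\|\tilde g^t\|_2^2.
\]

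Next, I would sum this bound over $t=0,\dots,T-1$. The first term telescopes and is bounded by $\|x^0-x^*\|_2^2/(2\eta)$. Because $x^0,x^*\in[0,1]^d$, we have $\|x^0-x^*\|_\infty\le 1$ and hence $\|x^0-x^*\|_2^2\le d$. For the second term, the truncation operator $\mathcal{T}_M$ forces $\|\tilde g^t\|_\infty\le M$ coordinate-wise, so $\|\tilde g^t\|_2^2\le dM^2$. Combining these two observations yields
\[
\sum_{t=0}^{T-1}\langle \tilde g^t,x^t-x^*\rangle \;\le\; \frac{d}{2\eta}+\frac{\eta}{2}\cdot T\cdot dM^2.
\]

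Finally, plugging in the chosen step size $\eta=1/(M\sqrt{T})$ balances the two terms: each equals $dM\sqrt{T}/2$, giving the total bound $dM\sqrt{T}$, which after dividing by $T$ produces the advertised $dM/\sqrt{T}$. There is no serious obstacle here; the only point that needs care is to verify that bounding $\|\tilde g^t\|_2^2$ by $dM^2$ (rather than by a Euclidean-norm surrogate of $L$) is the right step, since this is precisely what the coordinate-wise truncation buys us and what controls the $d$-dependence in Theorem~\ref{thm:ssgd1}.
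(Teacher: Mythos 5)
Your proposal is correct and follows essentially the same argument as the paper's proof: non-expansiveness of the projection onto $[0,1]^d$, the one-step descent inequality, telescoping the squared distances with $\|x^0-x^*\|_2^2\le d$, bounding $\|\tilde g^t\|_2^2\le dM^2$ via the coordinate-wise truncation, and balancing the two terms with $\eta = 1/(M\sqrt{T})$. No substantive differences.
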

\begin{proof}{Proof.}
We define $\tilde{x}^{t+1}:=x^t - \eta \tilde{g}^t$ as the next point before the projection onto $[0,1]^d$. Recalling the non-expansion property of orthogonal projection, we get
\begin{align*}
\| x^{t+1} - x^* \|_2^2 &= \|\mathcal{P}_\mathcal{X}\left(\tilde{x}^{t+1} - x^*\right) \|_2^2 \leq \| \tilde{x}^{t+1} - x^* \|_2^2 = \| x^t - x^* - \eta \tilde{g}^t \|_2^2\\
&= \| x^t - x^* \|_2^2 + \eta^2 \|\tilde{g}^t\|^2_2 - 2\eta \langle \tilde{g}^t, x^t - x^*\rangle,
\end{align*}
and equivalently,
\[  \langle \tilde{g}^t, x^t - x^*\rangle = \frac{1}{2\eta}\left[ \left\| x^t - x^* \right\|_2^2 - \left\| x^{t+1} - x^* \right\|_2^2\right] + \frac{\eta}{2} \cdot \left\|\tilde{g}^t\right\|^2_2. \]
Summing over $t=0,1,\dots,T-1$, we have
\begin{align*}
\sum_{t=0}^{T-1}~ \langle \tilde{g}^t, x^t - x^*\rangle &= \frac{\left\| x^0 - x^* \right\|_2^2 - \left\| x^T - x^* \right\|_2^2}{2\eta} + \frac{\eta}{2}\sum_{t=0}^{T-1}\left\|\tilde{g}^t\right\|^2_2\\
&\leq \frac{ d \left\| x^0 - x \right\|^2_\infty}{2\eta} + \frac{\eta}{2}\sum_{t=0}^{T-1}\left\|\tilde{g}^t\right\|^2_2 \leq \frac{ d}{2\eta} + \frac{\eta}{2}\sum_{t=0}^{T-1}\left\|\tilde{g}^t\right\|^2_2.
\end{align*}
By the definition of truncation, it follows that $\|\tilde{g}^t\|_2^2 \leq dM^2$. Choosing
\[ \eta := \frac{1}{M\sqrt{T}}, \]
it follows that
\[ \sum_{t=0}^{T-1}~ \langle \tilde{g}^t, x^t - x^*\rangle \leq \frac{ d}{2\eta} + \frac{\eta}{2}\sum_{t=0}^{T-1}\left\|\tilde{g}^t\right\|^2_2 \leq \frac{d}{2\eta} + \frac{\eta T dM^2}{2} = {dM\sqrt{T}}. \]
\hfill\Halmos\end{proof}
Finally, using Lemmas \ref{lem:ssgd-1}, \ref{lem:ssgd-2} and \ref{lem:ssgd-3}, we can finish the proof of Theorem \ref{thm:ssgd1}.
\begin{proof}{Proof of Theorem \ref{thm:ssgd1}.}

Denote $f^*$ as the optimal value of $\tilde{f}(x)$. Using the convexity of $\tilde{f}(x)$, we know
\begin{align}\label{eqn:bound-ssgd-5} \tilde{f}(\bar{x}) - f^* &\leq \frac{1}{T}\sum_{t=0}^{T-1} \left[ \tilde{f}({x}^t) - f^* \right] \leq \frac{1}{T}\sum_{t=0}^{T-1}~\left\langle g^t, x^t - x^*\right\rangle\\
\nonumber&= \frac{1}{T}\sum_{t=0}^{T-1} \left[ \left\langle g^t + b^t - \tilde{g}^t, x^t - x^*\right\rangle + \left\langle \tilde{g}^t, x^t - x^*\right\rangle - \left\langle b^t, x^t - x^*\right\rangle \right]. \end{align}
We choose
\[ T := \frac{3584d\sigma^2}{\epsilon^2}\log\left(\frac{2}{\delta}\right) = \Theta\left[ \frac{d}{\epsilon^2}\log\left(\frac{1}{\delta}\right) \right]. \]
Recalling Assumption \ref{asp:1}, we know $\delta$ is small enough and therefore we have the following estimates:
\[ L^2 \leq M^2 = \tilde{\Theta}\left[ \log\left(\frac{dT}{\epsilon}\right) \right] = \tilde{O}\left[ \log\left(\frac{d^2}{\epsilon^3}\right) + \log\log\left(\frac{1}{\delta}\right) \right] \leq \frac{\epsilon^2 T}{64d^2},\quad T \geq \max\left\{ \frac{2\epsilon}{\sigma}, 4 \right\}. \]
Hence, the conditions in Lemmas \ref{lem:ssgd-1} and \ref{lem:ssgd-2} are satisfied. By Lemma \ref{lem:ssgd-1}, we know
\begin{align}\label{eqn:bound-ssgd-6} - \frac{1}{T}\sum_{t=0}^{T-1}~ \left\langle b^t, x^t - x^*\right\rangle \leq \frac{1}{T}\sum_{t=0}^{T-1}~ \left\|b^t\right\|_1\left\|x^t - x^*\right\|_\infty \leq \frac{\epsilon}{2T} \leq \frac{\epsilon}{8}. \end{align}
By Lemma \ref{lem:ssgd-2}, it holds
\begin{align}\label{eqn:bound-ssgd-7} \frac{1}{T}\sum_{t=0}^{T-1}~\left\langle g^t + b^t - \tilde{g}^t, x^t - x^*\right\rangle \leq \sqrt{\frac{224d\sigma^2}{T}\log\left(\frac{2}{\delta}\right)} \leq \frac{\epsilon}{4} \end{align}
with probability at least $1-\delta$, where the last inequality is from our choice of $T$. By Lemma \ref{lem:ssgd-3}, we know
\begin{align}\label{eqn:bound-ssgd-8} \frac{1}{T}\sum_{t=0}^{T-1}~\left\langle \tilde{g}^t, x^t - x^*\right\rangle \leq \frac{dM}{\sqrt{T}} \leq \frac{\epsilon}{8}. \end{align}
Substituting inequalities \eqref{eqn:bound-ssgd-6}, \eqref{eqn:bound-ssgd-7} and \eqref{eqn:bound-ssgd-8} into inequality \eqref{eqn:bound-ssgd-5}, we get
\[ \tilde{f}(\bar{x}) - f^* \leq \frac{\epsilon}{2} \]
holds with probability at least $1-\delta/2$. By the results of Theorem \ref{thm:round}, we know Algorithm \ref{alg:multi-dim-ssgd} returns an $(\epsilon,\delta)$-PGS solution. 

Finally, we estimate the simulation cost of Algorithm \ref{alg:multi-dim-ssgd}. For each iteration, we need to generate a stochastic subgradient using \eqref{eqn:stochastic-subgrad} and the simulation cost is $2d$. Hence, the total simulation cost of all iterations is
\[ 2d \cdot T = \tilde{\Theta}\left[ \frac{d^2}{\epsilon^2}\log\left(\frac{1}{\delta}\right) \right]. \]
By Theorem \ref{thm:round}, the simulation cost of rounding process is at most
\[ \tilde{O}\left[ \frac{d}{\epsilon^2}\log\left(\frac{1}{\delta}\right) \right]. \]
Thus, we know the total simulation cost of Algorithm \ref{alg:multi-dim-ssgd} is at most
\[ \tilde{O}\left[ \frac{d^2}{\epsilon^2}\log\left(\frac{1}{\delta}\right) \right]. \]
\hfill\Halmos\end{proof}

\subsection{Analysis of the bounded stochastic subgradient case}

In this subsection, we consider the special case when the stochastic subgradient is assumed to have a bounded $\ell_1$-norm. 
\begin{assumption}\label{asp:6}
There exist a constant $G$ and an unbiased subgradient estimator $\hat{g}$ such that
\[ \mathbb{P}\left[ \|\hat{g}\|_1 \leq G \right] = 1 . \]
Moreover, the simulation cost of generating each $\hat{g}$ is at most $\beta$ simulations. 
\end{assumption}
We note that $G$ and $\beta$ may depend on $d$ and $N$. In the field of stochastic optimization, this assumption is common when analyzing the high-probability convergence of stochastic subgradient methods \citep{hazan2011beyond,xu2016accelerated}. 
%
We first give examples where Assumption \ref{asp:6} holds. 
\begin{example}\label{exm:convex}
We consider the case when the randomness of each \revise{choice of decision variables} shares the same measure space, i.e., there exists a measure space $(\mathrm{Z},\mathcal{B}_{\mathrm{Z}})$ such that $\xi_x$ can be any element in the measure space for all $x\in\mathcal{X}$. Moreover, for any fixed $\xi\in\mathcal{B}$, the function $F(\cdot,\xi)$ is also $L^\natural$-convex (or submodular when $N=2$) and has $\ell_\infty$-Lipschitz constant $\tilde{L}$. Then, we consider the subgradient estimator
\begin{align}\label{eqn:stochastic-subgrad-2} \hat{g}_{\alpha_x(i)} := F\left(S^{x,i},\xi\right) - F\left(S^{x,i-1},\xi\right),\quad\forall i\in[d]. \end{align}
%
The simulation cost of estimator \eqref{eqn:stochastic-subgrad-2} is $d+1$.
%
In addition, property (v) of Lemma \ref{lem:submodular} gives
\[ \|\hat{g}\|_1 \leq 3\tilde{L} / 2. \]
Therefore, in this situation, the Assumption \ref{asp:6} holds with $G=3\tilde{L}/2$ and $\beta=d+1$.
\end{example}
When the distribution at each \revise{choice of decision variables} is the Bernoulli, we show that Assumption \ref{asp:6} also holds.
\begin{example}\label{exm:convex2}
We consider the case when the distribution at each point $x\in\mathcal{X}$ is Bernoulli, namely, we have
\[ \mathbb{P}[F(x,\xi_x) = 1] = 1 - \mathbb{P}[F(x,\xi_x) = 0] = f(x) \in [0,1],\quad\forall x\in\mathcal{X}. \]
We note that the Bernoulli distribution is a special case of sub-Gaussian distributions. In this case, the $\ell_\infty$-Lipschitz constant is $1$ and property (v) in Lemma \ref{lem:submodular} gives $\left\| g \right\|_1 \leq 3/2$ for any subgradient $g$. \revise{We consider the subgradient estimator \eqref{eqn:stochastic-subgrad}. At point $x$, if index $i$ is chosen, then we know that
\[ \left\|\hat{g}\right\|_1 = d\cdot\left| F\left(S^{x,i},\xi_i^1\right) - F\left(S^{x,i-1},\xi_{i-1}^2\right) \right| \leq d. \]
%
Hence, Assumption \ref{asp:6} holds with $G = d$ and $\beta=2$.}
\end{example}
%
%
%
Next, we estimate the expected simulation cost of Algorithm \ref{alg:multi-dim-ssgd} under Assumption \ref{asp:6}. 
Since the stochastic subgradient is bounded, the truncation step is unnecessary in Algorithm \ref{alg:multi-dim-ssgd}. The simulation cost of Algorithm \ref{alg:multi-dim-ssgd} is estimated in the following theorem. The proof is similar to Lemma 10 in~\citet{hazan2011beyond} and, since the feasible set is the hypercube $[0,1]^d$, we use $\ell_\infty$-norm instead of $\ell_2$-norm to bound distances between points. 
\begin{theorem}\label{thm:ssgd2}
Suppose that Assumptions \ref{asp:1}-\ref{asp:5} and \ref{asp:6} hold. If we skip the truncation step in Algorithm \ref{alg:multi-dim-ssgd} (i.e., set $M=\infty$) and choose
\[ T = \tilde{\Theta}\left[ \frac{(L+G)^2}{\epsilon^2}\log\left(\frac{1}{\delta}\right) \right],\quad \eta = \sqrt{\frac{d}{TG^2}}, \]
then Algorithm \ref{alg:multi-dim-ssgd} returns an $(\epsilon,\delta)$-PGS solution. Furthermore, we have
\[ T(\epsilon,\delta,\mathcal{MC}) = O\left[ \frac{\beta(L+G)^2 + d}{\epsilon^2}\log\left(\frac{1}{\delta}\right) + \frac{d^2G^2}{\epsilon^2} \right] = \tilde{O}\left[ \frac{\beta(L+G)^2 + d}{\epsilon^2}\log\left(\frac{1}{\delta}\right) \right]. \]
\end{theorem}
\begin{proof}{Proof of Theorem \ref{thm:ssgd2}.}
The proof of Theorem \ref{thm:ssgd2} is given in \ref{ec:ssgd2}.
\hfill\Halmos\end{proof}
In the case of Example \ref{exm:convex}, we have $\beta = d + 1, G = 3\tilde{L}/2$ and then the asymptotic simulation cost of Algorithm \ref{alg:multi-dim-ssgd} is at most
\[ \tilde{O}\left[ \frac{d(L+\tilde{L})^2}{\epsilon^2}\log\left(\frac{1}{\delta}\right) \right]. \]
If both Lipschitz constants are independent of $d$ and $N$, the asymptotic simulation cost becomes
\[ \tilde{O}\left[ \frac{d}{\epsilon^2}\log\left(\frac{1}{\delta}\right) \right], \]
which is $O(d)$ better than the general case without Assumption \ref{asp:6}. In addition, in the case of Example \ref{exm:convex2}, we have $G = d$ and $\beta=2$. Hence, the asymptotic simulation cost is at most
\[ \tilde{O}\left[ \frac{d^2}{\epsilon^2}\log\left(\frac{1}{\delta}\right) \right]. \]
%
Finally, we note that if we substitute $\epsilon$ with $c/2$, all upper bounds of simulation cost under Assumption \ref{asp:6} also hold for the PCS-IZ guarantee.

\subsection{Proof of Theorem \ref{thm:ssgd2}}
\label{ec:ssgd2}

In this subsection, we provide a proof to Theorem \ref{ec:ssgd2}. Since the stochastic gradient is bounded, we apply the Azuma's inequality for martingale difference sequences with bounded tails.
\begin{lemma}[Azuma's inequality with bounded tails]\label{lem:azuma}
Let $X_0,\dots,X_{T-1}$ be a martingale difference sequence. Suppose there exists a constant $b$ such that for any $t\in\{0,\dots,T-1\}$,
\[ \mathbb{P}(|X_t| \leq b) = 1. \]
Then for any $\delta>0$, it holds with probability at least $1-\delta$ that
\begin{align}\label{eqn:sub-gaussian-1} \frac{1}{T} \sum_{t=0}^{T-1}X_t \leq b\sqrt{\frac{2}{T}\log\left(\frac{1}{\delta}\right)}. \end{align}
\end{lemma}

The proof of Theorem \ref{thm:ssgd2} follows a similar way as Theorem \ref{thm:ssgd1}. We first bound the noise term by Azuma's inequality.

\begin{lemma}\label{lem:ssgd-4}
Suppose that Assumptions \ref{asp:1}-\ref{asp:6} hold and let $x^*$ be a minimizer of $f(x)$. Then, it holds
\[ \frac{1}{T}\sum_{t=0}^{T-1}~\left\langle g^t - \hat{g}^t, x^t - x^*\right\rangle \leq \left( \frac{3L}{2} + G \right)\sqrt{\frac{2}{T}\log\left(\frac{1}{\delta}\right)} \]
with probability at least $1-\delta$.
\end{lemma}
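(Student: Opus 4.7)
The plan is to recognize that the quantity of interest, when viewed across iterations, forms a martingale difference sequence to which the bounded-tail Azuma inequality (Lemma \ref{lem:azuma}) can be applied. Specifically, let $\mathcal{F}_t$ denote the $\sigma$-algebra generated by $x^0,\dots,x^t$ (equivalently, by the randomness of $\hat g^0,\dots,\hat g^{t-1}$), and set
\[
X_t \;:=\; \left\langle g^t - \hat g^t,\; x^t - x^*\right\rangle.
\]
Since $x^t$ and $g^t$ are $\mathcal{F}_t$-measurable and $\hat g^t$ is an unbiased estimator of $g^t$ conditional on $\mathcal{F}_t$ (by Lemma \ref{lem:submodular}(iv) together with the construction \eqref{eqn:stochastic-subgrad} or \eqref{eqn:stochastic-subgrad-2} used in Assumption \ref{asp:6}), we have $\mathbb{E}[X_t\mid\mathcal{F}_t]=0$, so $\{X_t\}_{t=0}^{T-1}$ is a martingale difference sequence.

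Next I would obtain an almost-sure uniform bound on $|X_t|$. By Hölder's inequality,
\[
|X_t| \;\leq\; \|g^t - \hat g^t\|_1\,\|x^t - x^*\|_\infty.
\]
The iterates produced by Algorithm \ref{alg:multi-dim-ssgd} lie in $[0,1]^d$ after projection, and the minimizer $x^*$ also lies there, so $\|x^t - x^*\|_\infty \leq 1$. For the subgradient term, Lemma \ref{lem:submodular}(v) yields $\|g^t\|_1 \leq 3L/2$, and Assumption \ref{asp:6} gives $\|\hat g^t\|_1 \leq G$ almost surely. Combining by the triangle inequality,
\[
|X_t| \;\leq\; \tfrac{3L}{2} + G \qquad\text{a.s.}
\]

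Finally I would invoke Azuma's inequality with bounded tails (Lemma \ref{lem:azuma}) applied to the martingale difference sequence $\{X_t\}$ with bound $b = 3L/2 + G$, which delivers
\[
\frac{1}{T}\sum_{t=0}^{T-1} X_t \;\leq\; \left(\tfrac{3L}{2}+G\right)\sqrt{\tfrac{2}{T}\log(\tfrac{1}{\delta})}
\]
with probability at least $1-\delta$, matching the claimed bound. There is no real obstacle here; the statement is essentially an immediate consequence of the bounded-tail Azuma inequality once one verifies the martingale difference structure and the almost-sure $\ell_1$ bounds on $g^t$ and $\hat g^t$. The only mild subtlety is tracking that $\|x^t - x^*\|_\infty \leq 1$ holds throughout the iterations, which follows from the projection step onto $[0,1]^d$ and the feasibility of $x^*$.
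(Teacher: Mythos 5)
Your proof is correct and follows essentially the same route as the paper's: identify $X_t$ as a martingale difference sequence via unbiasedness, bound $|X_t|\le \|g^t-\hat g^t\|_1\|x^t-x^*\|_\infty \le 3L/2+G$ using Lemma \ref{lem:submodular}(v), Assumption \ref{asp:6}, and $\|x^t-x^*\|_\infty\le 1$, then apply the bounded-tail Azuma inequality (Lemma \ref{lem:azuma}) with $b=3L/2+G$. No discrepancies.
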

\begin{proof}{Proof.}
Same as the proof of Lemma \ref{lem:ssgd-2}, the fact that $\hat{g}^t$ is unbiased implies that
\[ X_t := \left\langle g^t - \hat{g}^t, x^t - x^*\right\rangle\quad t=0,1,\dots,T-1 \]
is a martingale difference sequence. By Assumption \ref{asp:6} and property (v) in Lemma \ref{lem:submodular}, we know
\[ \left| X_t \right| = \left|\left\langle g^t - \hat{g}^t, x^t - x^*\right\rangle\right| \leq \left\| g^t - \hat{g}^t \right\|_1\left\| x^t - x^* \right\|_\infty \leq \left\| g^t - \hat{g}^t \right\|_1 \leq 3L/2 + G, \]
which implies that the condition \eqref{eqn:sub-gaussian-1} holds with $b=3L/2 + G$. Using Lemma \ref{lem:azuma}, we get the conclusion of this lemma.
\hfill\Halmos\end{proof}
The following lemma bounds the error of the algorithm and is similar to Theorem 3.2.2 in~\citet{nesterov2018lectures}.
\begin{lemma}\label{lem:ssgd-5}
Suppose that Assumptions \ref{asp:1}-\ref{asp:6} hold and let $x^*$ be a minimizer of $f(x)$. If we choose
\[ \eta = \sqrt{\frac{d}{TG^2}}, \]
then we have
\[ \frac{1}{T}\sum_{t=0}^{T-1}~\left\langle \hat{g}^t, x^t - x^*\right\rangle \leq \sqrt{\frac{dG^2}{T}}. \]
\end{lemma}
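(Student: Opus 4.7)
The plan is to mimic the standard projected SGD analysis, but measure the initial distance in $\ell_\infty$ (which is bounded by $1$ on the cube $[0,1]^d$) and bound $\|\hat g^t\|_2$ via $\|\hat g^t\|_1 \le G$. Since the stochastic subgradient here is used in the unbiased, non-truncated form ($M=\infty$), the argument is essentially the one-line projected-gradient identity without any bias term to carry around.

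First, I would use non-expansiveness of the projection $\mathcal P$ onto $[0,1]^d$: writing $x^{t+1}=\mathcal P(x^t-\eta\hat g^t)$ and $x^*\in[0,1]^d$,
\[
\|x^{t+1}-x^*\|_2^2 \le \|x^t-x^*-\eta\hat g^t\|_2^2 = \|x^t-x^*\|_2^2 - 2\eta\langle \hat g^t,x^t-x^*\rangle + \eta^2\|\hat g^t\|_2^2.
\]
Rearranging and telescoping over $t=0,\dots,T-1$ gives
\[
\sum_{t=0}^{T-1}\langle \hat g^t,x^t-x^*\rangle \;\le\; \frac{\|x^0-x^*\|_2^2}{2\eta} + \frac{\eta}{2}\sum_{t=0}^{T-1}\|\hat g^t\|_2^2.
\]

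Next I bound the two terms. For the initial distance, $x^0,x^*\in[0,1]^d$ yields $\|x^0-x^*\|_\infty\le 1$ and hence $\|x^0-x^*\|_2^2\le d\|x^0-x^*\|_\infty^2\le d$; this is the step where using the cube geometry (rather than the $\ell_2$ diameter $\sqrt d$ naively applied to an arbitrary compact set) is essential. For the gradient term, Assumption \ref{asp:6} gives $\|\hat g^t\|_1\le G$ almost surely, so $\|\hat g^t\|_2\le\|\hat g^t\|_1\le G$ and the sum is at most $T G^2$. Combining,
\[
\sum_{t=0}^{T-1}\langle \hat g^t,x^t-x^*\rangle \;\le\; \frac{d}{2\eta} + \frac{\eta T G^2}{2}.
\]

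Finally, the choice $\eta=\sqrt{d/(TG^2)}$ balances the two terms and gives $\sum_{t}\langle \hat g^t,x^t-x^*\rangle\le\sqrt{dG^2 T}$. Dividing by $T$ yields the claimed bound $\sqrt{dG^2/T}$. There is no real obstacle here; the only subtle choice is using the $\ell_\infty$ diameter of the feasible cube to keep the factor at $d$ rather than implicitly inflating it, which in turn makes the optimal step size $\eta$ scale as $\sqrt d$ and produces the dimension dependence reported in Theorem \ref{thm:ssgd2}.
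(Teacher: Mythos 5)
Your proof is correct and follows essentially the same route as the paper's: non-expansiveness of the projection, telescoping the standard identity, bounding $\|x^0-x^*\|_2^2\le d$ via the $\ell_\infty$ diameter of the cube, and bounding $\|\hat g^t\|_2\le\|\hat g^t\|_1\le G$ from Assumption \ref{asp:6} before balancing with the stated $\eta$. No gaps.
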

\begin{proof}{Proof.}
We define $\tilde{x}^{t+1}:=x^t - \eta \hat{g}^t$ as the next point before the projection onto $[0,1]^d$. Recalling the non-expansion property of orthogonal projection, we get
\begin{align*}
\| x^{t+1} - x^* \|_2^2 &= \|\mathcal{P}_\mathcal{X}\left(\tilde{x}^{t+1} - x^*\right) \|_2^2 \leq \| \tilde{x}^{t+1} - x^* \|_2^2 = \| x^t - x^* - \eta \hat{g}^t \|_2^2\\
&= \| x^t - x^* \|_2^2 + \eta^2 \|\tilde{g}^t\|^2_2 - 2\eta \langle \hat{g}^t, x^t - x^*\rangle,
\end{align*}
and equivalently,
\[ \langle \hat{g}^t, x^t - x^*\rangle = \frac{1}{2\eta}\left[ \left\| x^t - x^* \right\|_2^2 - \left\| x^{t+1} - x^* \right\|_2^2\right] + \frac{\eta}{2} \cdot \left\|\hat{g}^t\right\|^2_2. \]
Using Assumption \ref{asp:6}, we know $\left\|\hat{g}^t\right\|^2_2 \leq \left\|\hat{g}^t\right\|^2_1 \leq G^2$ and therefore
\[ \langle \hat{g}^t, x^t - x^*\rangle = \frac{1}{2\eta}\left[ \left\| x^t - x^* \right\|_2^2 - \left\| x^{t+1} - x^* \right\|_2^2\right] + \frac{\eta G^2}{2}. \]
Summing over $t=0,1,\dots,T-1$, we have
\begin{align*}
\sum_{t=0}^{T-1}~ \langle \hat{g}^t, x^t - x^*\rangle &= \frac{\left\| x^0 - x^* \right\|_2^2 - \left\| x^T - x^* \right\|_2^2}{2\eta} + T\cdot \frac{\eta G^2}{2} \leq \frac{ d \left\| x^0 - x \right\|^2_\infty}{2\eta} + \frac{\eta TG^2}{2} \leq \frac{d}{2\eta} + \frac{\eta TG^2}{2}.
\end{align*}
Choosing
\[ \eta := \sqrt{\frac{d}{TG^2}}, \]
it follows that
\[ \sum_{t=0}^{T-1}~ \langle \tilde{g}^t, x^t - x^*\rangle \leq G{\sqrt{dT}}. \]
\hfill\Halmos\end{proof}
Now, we prove Theorem \ref{thm:ssgd2} using Lemmas \ref{lem:ssgd-4} and \ref{lem:ssgd-5}.
\begin{proof}{Proof of Theorem \ref{thm:ssgd2}.}

According to to the proof of Theorem \ref{thm:ssgd1}, we have
\begin{align}\label{eqn:bound-ssgd-9} \tilde{f}(\bar{x}) - f^* &\leq \frac{1}{T}\sum_{t=0}^{T-1}~\left[ \tilde{f}({x}^t) - f^* \right] \leq \frac{1}{T}\sum_{t=0}^{T-1}~\left\langle g^t, x^t - x^*\right\rangle\\
\nonumber&= \frac{1}{T}\sum_{t=0}^{T-1}~\left\langle \hat{g}^t, x^t - x^*\right\rangle + \frac{1}{T}\sum_{t=0}^{T-1}~\left\langle g^t - \hat{g}^t, x^t - x^*\right\rangle. \end{align}
By Lemmas \ref{lem:ssgd-4} and \ref{lem:ssgd-5}, it holds
\[ \frac{1}{T}\sum_{t=0}^{T-1}~\left\langle \hat{g}^t, x^t - x^*\right\rangle \leq \left( \frac{3L}{2} + G \right)\sqrt{\frac{2}{T}\log\left(\frac{2}{\delta}\right)},\quad \frac{1}{T}\sum_{t=0}^{T-1}~\left\langle g^t - \hat{g}^t, x^t - x^*\right\rangle \leq \sqrt{\frac{dG^2}{T}}  \]
with probability at least $1-\delta/2$. Choosing
\[ T = \left( \frac{3L}{2} + G \right)^{2} \cdot \frac{32}{\epsilon^2}\log\left(\frac{2}{\delta}\right) = \Theta\left[ \frac{(L+G)^2}{\epsilon^2}\log\left(\frac{1}{\delta}\right) \right], \]
we know
\[ T \geq \frac{16dG^2}{\epsilon^2} \]
when $\delta$ is small enough. Hence, we have
\[ \frac{1}{T}\sum_{t=0}^{T-1}~\left\langle \hat{g}^t, x^t - x^*\right\rangle \leq \frac{\epsilon}{4},\quad \frac{1}{T}\sum_{t=0}^{T-1}~\left\langle g^t - \hat{g}^t, x^t - x^*\right\rangle \leq \frac{\epsilon}{4}  \]
holds with probability at least $1-\delta/2$. Substituting into inequality \eqref{eqn:bound-ssgd-9}, we have
\[ \tilde{f}(\bar{x}) - f^* \leq \frac{\epsilon}{2} \]
holds with probability at least $1-\delta/2$. By the results of Theorem \ref{thm:round}, we know Algorithm \ref{alg:multi-dim-ssgd} returns an $(\epsilon,\delta)$-PGS solution. 

Finally, we estimate the simulation cost of Algorithm \ref{alg:multi-dim-ssgd}. For each iteration, the simulation cost is decided by the generation of a stochastic subgradient, which is at most $\beta$ by Assumption \ref{asp:6}. Hence, the total simulation cost of all iterations is
\[ O\left[ \beta T \right] = \tilde{O}\left[ \frac{\beta(L+G)^2}{\epsilon^2}\log\left(\frac{1}{\delta}\right) \right]. \]
By Theorem \ref{thm:round}, the simulation cost of rounding process is at most
\[ \tilde{O}\left[ \frac{d}{\epsilon^2}\log\left(\frac{1}{\delta}\right) \right]. \]
Thus, we know the total simulation cost of Algorithm \ref{alg:multi-dim-ssgd} is at most
\[ \tilde{O}\left[ \frac{\beta(L+G)^2+d}{\epsilon^2}\log\left(\frac{1}{\delta}\right) \right]. \]
\hfill\Halmos\end{proof}

\section{Proofs in Section \ref{sec:multi-general}}

\subsection{Proof of Theorem \ref{thm:consistent}}
\label{ec:consistent}

\begin{proof}{Proof of Theorem \ref{thm:consistent}.}
To prove the function is well-defined, we only need to show that for any two different points $y,z\in[N-1]^d$ such that $\mathcal{C}_y \cap \mathcal{C}_z\neq\emptyset$, we have $\tilde{f}_y(x) = \tilde{f}_z(x)$ for all $x\in\mathcal{C}_y \cap \mathcal{C}_z$. We first consider the case when $\|y - z\|_1 = 1$. Without loss of generality, we assume
\[ y = (1,1\dots,1),\quad z = (2,1,\dots,1). \]
In this case, we know that
\[ \mathcal{C}_{y} \cap \mathcal{C}_{z} = \{ (2,x_2,\dots,x_d):x_2,\dots,x_d\in[0,1] \}. \]
Suppose that point $x\in\mathcal{C}_{y} \cap \mathcal{C}_{z}$. We first calculate $\tilde{f}_y(x)$. We can define the ``local coordinate'' of $x$ in $\mathcal{C}_{y}$ as
\[ x - y = (1,x_2-1,\dots,x_d-1). \]
Let $\alpha_1$ be a consistent permutation of $x$ in $\mathcal{C}_{y}$ and and $S^{1,i}$ be the corresponding $i$-th neighbouring point. Since $(x-y)_1 = 1$ is not smaller than any other components, we can assume $\alpha_1(1) = 1$ and calculate $\tilde{f}_{y}(x)$ as
%
%
\begin{align} \label{eqn:bound-ssgd-10}
\tilde{f}_{y}(x) &= [1-(x-y)_{\alpha_1(1)}]f(S^{1,0}) + \sum_{i=1}^{d-1}~[(x-y)_{\alpha_1(i)} - (x-y)_{\alpha_1(i+1)}]f(S^{1,i}) + (x-y)_{\alpha_1(d)}f(S^{1,d})\\
\nonumber&= \sum_{i=1}^{d-1}~[(x-y)_{\alpha_1(i)} - (x-y)_{\alpha_1(i+1)}]f(S^{1,i}) + (x-y)_{\alpha_1(d)}f(S^{1,d})\\
\nonumber&= \sum_{i=1}^{d-1}~[x_{\alpha_1(i)} - x_{\alpha_1(i+1)}]f(S^{1,i}) + \left[ x_{\alpha_1(d)} -1 \right]f(S^{1,d}).
\end{align}
%
Next, we consider $\tilde{f}_z(x)$ and define the ``local coordinate'' of $x$ in $\mathcal{C}_{z}$ is
\[ x - z = (0,x_2-1,\dots,x_d - 1). \]
We define the permutation $\alpha_2$ as 
\[ \alpha_2(i) = \alpha_1(i+1),\quad\forall i\in[d-1],\quad \alpha_2(d) = \alpha_1(1) = 1. \]
By the definition of $\alpha_1$, we know
\begin{align*} 
&(x-z)_{\alpha_2(i)} = (x-y)_{\alpha_1(i+1)} \geq (x-y)_{\alpha_1(i+2)} = (x-z)_{\alpha_2(i+1)},\quad\forall i\in[d-2],\\ &(x-z)_{\alpha_2(d-1)} \geq 0 = (x_z)_{\alpha_2(d)}.
\end{align*}
Hence, we know $\alpha_2$ is a consistent permutation of $x$ in $\mathcal{C}_{z}$ and let $S^{2,i}$ be the corresponding $i$-th neighbouring point of $x$ in $\mathcal{C}_{z}$. Similar to the first case, the \lovasz extension $\tilde{f}_z(x)$ can be calculated as
\begin{align} \label{eqn:bound-ssgd-11}
\tilde{f}_{y}(x) &= [1-(x-z)_{\alpha_2(1)}]f(S^{2,0}) + \sum_{i=1}^{d-1}~[(x-z)_{\alpha_2(i)} - (x-z)_{\alpha_2(i+1)}]f(S^{2,i}) + (x-z)_{\alpha_2(d)}f(S^{2,d})\\
\nonumber&= [1-(x-z)_{\alpha_2(1)}]f(S^{2,0}) + \sum_{i=1}^{d-1}~[(x-z)_{\alpha_2(i)} - (x-z)_{\alpha_2(i+1)}]f(S^{2,i})\\
\nonumber&= [2-x_{\alpha_2(1)}]f(S^{2,0}) + \sum_{i=1}^{d-1}~[x_{\alpha_2(i)} - x_{\alpha_2(i+1)}]f(S^{2,i}) + f(S^{2,d-1}).
\end{align}
Recalling the fact that $z = y + e_1$, for any $i\in[d-1]$, we have
\[ S^{2,i} = z + \sum_{j=1}^i~e_{\alpha_2(j)} = y + e_1 + \sum_{j=1}^i~e_{\alpha_1(j+1)} = y + \sum_{j=1}^{i+1}~e_{\alpha_1(i)} = S^{1,i+1}. \]
Substituting into equation \eqref{eqn:bound-ssgd-11}, we know
\begin{align*}
\tilde{f}_{y}(x) &= [2-x_{\alpha_2(1)}]f(S^{2,0}) + \sum_{i=1}^{d-1}~[x_{\alpha_2(i)} - x_{\alpha_2(i+1)}]f(S^{2,i}) + f(S^{2,d-1})\\
&= [2-x_{\alpha_2(1)}]f(S^{1,1}) + \sum_{i=1}^{d-2}~[x_{\alpha_2(i)} - x_{\alpha_2(i+1)}]f(S^{1,i+1}) + [x_{\alpha_2(d-1)} - x_{\alpha_2(d)}]f(S^{1,d}) + f(S^{1,d})\\
&= [x_{\alpha_1(1)}-x_{\alpha_1(2)}]f(S^{1,1}) + \sum_{i=1}^{d-2}~[x_{\alpha_1(i+1)} - x_{\alpha_1(i+2)}]f(S^{1,i+1}) + \left[ x_{\alpha_1(d)} - 2 \right]f(S^{1,d}) + f(S^{1,d})\\
&= \sum_{i=1}^{d-1}~[x_{\alpha_1(i)} - x_{\alpha_1(i+1)}]f(S^{1,i}) + \left[ x_{\alpha_1(d)} - 1 \right]f(S^{1,d}),
\end{align*}
which is equal to $\tilde{f}_y(x)$ by equation \eqref{eqn:bound-ssgd-10}.

Then, we consider the case when $\|y - z\|_1 > 1$. Since $\mathcal{C}_y \cap \mathcal{C}_z \neq\emptyset$, we know $\|y - z\|_\infty = 1$. Without loss of generality, we consider the case when
\[ y = (1,1,\dots,1),\quad z = y + \sum_{j=1}^k~e_j, \]
where constant $k\in[d]$. In this case, we know
\[ \mathcal{C}_y \cap \mathcal{C}_z = \left\{ x \in\mathbb{R}^d : x_j = 2,~\forall j\leq k,~x_j \in [0,1],~\forall j\geq k+1 \right\}. \]
We define
\[ y_i := y + \sum_{j=1}^i~e_j,\quad\forall i\in\{0,1,\dots,k\}. \]
Then, it follows that
\[  \| y_i - y_{i-1} \|_1 = 1,\quad\forall i\in[k],\quad y_0 = y,\quad y_k = z \]
and
\[ x \in \mathcal{C}_y \cap \mathcal{C}_z \subset \mathcal{C}_{y_i} \cap \mathcal{C}_{y_{i-1}} = \left\{ x \in\mathbb{R}^d : x_i = 2,~x_j \in [0,1],~\forall j\in[d]\backslash\{i\} \right\} ,\quad\forall i\in[k]. \]
Hence, by the results for the case when $\|y-z\|_1=1$, we know
\[ \tilde{f}_y(x) = \tilde{f}_{y_0}(x) = \tilde{f}_{y_1}(x) = \cdots = \tilde{f}_{y_k}(x) = \tilde{f}_z(x), \]
which means $\tilde{f}(x)$ is well-defined.

Finally, we prove the convexity of $\tilde{f}(x)$. Since the \lovasz extension is the support function of submodular functions~\citep[section 6.3]{fujishige2005submodular}, the function $\tilde{f}_y(x)$ is the support function of $f(x)$ inside hypercube $\mathcal{C}_y$. In addition, Theorem 7.20 in~\citet{murota2003discrete} implies that the $L^\natural$-convex function $f(x)$ is integrally convex. Hence, we know that the support function of ${f}(x)$ on $\mathcal{X}$ is equal to $\tilde{f}_y(x)$ in each hypercube $\mathcal{C}_y$. By the definition of $\tilde{f}(x)$ in \eqref{eqn:convex-extension}, the function $\tilde{f}(x)$ is the support function of $f(x)$ on $\mathcal{X}$. Since support functions are convex, we know $\tilde{f}(x)$ is convex.
\hfill\Halmos\end{proof}

\subsection{Proof of Theorem \ref{thm:ssgd3}}
\label{ec:ssgd3}

\begin{proof}{Proof of Theorem \ref{thm:ssgd3}.}

The proof can be done in the same way as Theorem \ref{thm:ssgd1} and we only give a sketch of the proof. We use the same notation as the proof of Theorem \ref{lem:ssgd-1}.
\begin{itemize}
    \item If we have
    \[  M \geq 2\sigma \cdot\sqrt{\log\left(\frac{8\sigma dT}{\epsilon}\right)} = \tilde{\Theta}\left[\sqrt{\log\left(\frac{dT}{\epsilon}\right)}\right],\quad T\geq\frac{2\epsilon}{\sigma}, \]
    then the proof of Lemma \ref{lem:ssgd-1} implies that
    \[ \left\| b^t \right\|_1 \leq \frac{\epsilon}{2T},\quad\forall t\in\{0,1,\dots,T-1\}. \]
    \item If we have
    \[ M = \max\left\{ L, 2\sigma \cdot\sqrt{\log\left(\frac{8\sigma dT}{\epsilon}\right)} \right\} = \tilde{\Theta}\left[\sqrt{\log\left(\frac{dNT}{\epsilon}\right)}\right],\quad T \geq \frac{2N\epsilon}{\sigma}, \]
    then the proof of Lemma \ref{lem:ssgd-2} shows that
    \[ \frac{1}{T}\sum_{t=0}^{T-1}~X_t \leq \sqrt{\frac{224dN^2\sigma^2}{T}\log\left(\frac{1}{\delta}\right)} \]
    holds with probability at least $1-\delta$.
    \item If we choose
    \[ \eta = \frac{N}{M\sqrt{T}}, \]
    then the proof of Lemma \ref{lem:ssgd-3} implies that
    \[ \frac{1}{T}\sum_{t=0}^{T-1} \langle \tilde{g}^t, x^t - x^*\rangle \leq \frac{dNM}{\sqrt{T}}. \]
\end{itemize}
Hence, choosing
\[ T = \tilde{\Theta}\left[ \frac{dN^2}{\epsilon^2}\log\left(\frac{1}{\delta}\right) \right],\quad M = \tilde{\Theta}\left[\sqrt{\log\left(\frac{dNT}{\epsilon}\right)}\right],\quad \eta = \frac{N}{M\sqrt{T}} \]
and using the inequality \eqref{eqn:bound-ssgd-5}, we know the averaging point $\bar{x}$ is an $(\epsilon/2,\delta/2)$-PGS solution. Combining with Theorem \ref{thm:round}, Algorithm \ref{alg:multi-dim-ssgd} returns an $(\epsilon,\delta)$-PGS solution. Since the simulation cost of each iteration is $2d$, the total simulation cost of Algorithm \ref{alg:multi-dim-ssgd} is at most
\[ \tilde{O}\left[ \frac{d^2N^2}{\epsilon^2}\log\left(\frac{1}{\delta}\right) \right] + \tilde{O}\left[ \frac{d}{\epsilon^2}\log\left(\frac{1}{\delta}\right) \right] = \tilde{O}\left[ \frac{d^2N^2}{\epsilon^2}\log\left(\frac{1}{\delta}\right) \right]. \]
\hfill\Halmos\end{proof}

\revise{Similarly, we can estimate the asymptotic simulation cost under Assumption \ref{asp:6}.
\begin{theorem} \label{thm:ssgd4}
Suppose that Assumptions \ref{asp:1}-\ref{asp:5} and \ref{asp:6} hold. If we skip the truncation step in Algorithm \ref{alg:multi-dim-ssgd} (or equivalently set $M=\infty$) and choose
\[ T = \tilde{\Theta}\left[ \frac{(L+G)^2N^2}{\epsilon^2}\log\left(\frac{1}{\delta}\right) \right],\quad \eta = \sqrt{\frac{dN^2}{TG^2}}, \]
then Algorithm \ref{alg:multi-dim-ssgd} returns an $(\epsilon,\delta)$-PGS solution. Furthermore, we have
\[ T(\epsilon,\delta,\mathcal{MC}) = O\left[ \frac{\beta(L+G)^2N^2 + d}{\epsilon^2}\log\left(\frac{1}{\delta}\right) + \frac{G^2d^2N^2 }{\epsilon^2} \right] = \tilde{O}\left[ \frac{\beta(L+G)^2N^2 + d}{\epsilon^2}\log\left(\frac{1}{\delta}\right) \right]. \]
\end{theorem}
The above theorem can be proved in the same way as Theorem \ref{thm:ssgd2} and we omit the proof. We note that the step size $\eta$ does not depend on $N$ in this case.}

\subsection{Algorithms for the PCS-IZ case}
\label{ec:ssgd5}

\revise{We first prove that the existence of indifference zone is equivalent to the so-called weak sharp minima condition of the convex extension. Moreover, we use the $\ell_\infty$ norm in place of the $\ell_2$ norm since the feasible set is a hypercube.
\begin{definition}
We say a function $f(x):\mathcal{X}\mapsto\mathbb{R}$ satisfies the \textbf{Weak Sharp Minimum (WSM) condition}, if the function $f(x)$ has a unique minimizer $x^*$ and there exists a constant $\kappa>0$ such that
\[ \|x - x^*\|_\infty \leq \kappa \left(f(x) - f^* \right),\quad\forall x\in\mathcal{X}, \]
where $f^*:=f(x^*)$.
\end{definition}
The WSM condition was first defined in~\citet{burke1993weak}, and is also called the polyhedral error bound condition in recent literature~\citep{yang2018rsg}. In addition, the WSM condition is a special case of the global growth condition in~\citet{xu2016accelerated} with $\theta=1$. The WSM condition can be used to leverage the distance between intermediate solutions and $(c,\delta)$-PCS-IZ solutions. The next theorem verifies that the WSM condition is equivalent to the existence of indifference zone.
\begin{theorem}\label{thm:wsm}
Suppose that function $f(x):\mathcal{X}\mapsto\mathbb{R}$ is a $L^\natural$-convex function and $\tilde{f}(x)$ is the convex extension on $[1,N]^d$. Given a constant $c>0$, function $f(x)\in\mathcal{MC}_{c}$ if and only if $\tilde{f}(x)$ satisfies the WSM condition with $\kappa=c^{-1}$.
\end{theorem}}
%
%
\begin{proof}{Proof of Theorem \ref{thm:wsm}.}

We first prove the sufficiency part and then consider the necessity part.

\paragraph{Sufficiency.} Suppose there exists a constant $\kappa>0$ such that the function $\tilde{f}(x)$ satisfies the WSM condition with $\kappa$. Considering any point $x\in\mathcal{X}\backslash\{x^*\}$, we know $\|x - x^*\|_\infty \geq 1$ and, by the WSM condition, 
\[ {f}(x) - f^* = \tilde{f}(x) - f^* \geq \kappa^{-1}\|x - x^*\|_\infty \geq \kappa^{-1}. \]
Thus, we know the indifference zone parameter for $f(x)$ is at least $\kappa^{-1}$ and $f(x)\in\mathcal{MC}_{\kappa^{-1}}$.

\paragraph{Necessity.} Suppose there exists a constant $c>0$ such that 
\[  f(x) - f^* \geq c,\quad\forall x\in\mathcal{X}\backslash\{x^*\}. \]
%
We first consider point $x\in[1,N]^d$ such that $\|x - x^*\|_\infty \leq 1$. In this case, we know there exists a hypercube $\mathcal{C}_y$ containing both $x$ and $x^*$. By the definition of \lovasz extension, we know that
\[ \tilde{f}(x) = [1-x_{\alpha_x(1)}]f\left(S^{x,0}\right) + \sum_{i=1}^{d-1}~[x_{\alpha_x(i)} - x_{\alpha_x(i+1)}]f\left(S^{x,i}\right) + x_{\alpha_x(d)}f\left(S^{x,d}\right) = \sum_{i=0}^{d}~\lambda_i f\left(S^{x,i}\right), \]
where we define
\[ \lambda_i := x_{\alpha_x(i)} - x_{\alpha_x(i+1)},\quad \forall i\in[d-1],\quad \lambda_0 := 1-x_{\alpha_x(1)},\quad \lambda_d := x_{\alpha_x(d)}. \]
Recalling the definition of consistent permutation, we get
\[ \sum_{i=0}^d~\lambda_i = 1,\quad \lambda_i \geq0,\quad\forall i\in\{0,\dots,d\} \]
and $\tilde{f}(x)$ is a convex combination of $f\left(S^{x,0}\right),\dots,f\left(S^{x,d}\right)$. 
In addition, we can calculate that
\[ \left(\sum_{i=0}^d ~ \lambda_i S^{x,i}\right)_{\alpha_x(k)} = \sum_{i=0}^d ~ \lambda_i \cdot S^{x,i}_{\alpha_x(k)} = \sum_{i=0}^d ~ \lambda_i \cdot \revisee{\mathbf{1}(i\geq k)} = \sum_{i=k}^{d} \lambda_i = x_{\alpha_x(k)}, \]
which implies that
\[ x = \sum_{i=0}^d~ \lambda_i S^{x,i}. \]
If $x^*\notin \left\{S^{x,0},\dots,S^{x,d}\right\}$, the assumption that indifference zone parameter is $c$ gives
\begin{align*}
    \tilde{f}(x) - f^* &= \sum_{i=0}^d~\lambda_i\left[ f\left(S^{x,i}\right) - f^* \right] \geq \sum_{i=0}^d~ \lambda_i \cdot c = c.
\end{align*}
Combining with $\|x-x^*\|_\infty\leq1$, we have
\[ \|x-x^*\|_\infty \leq c^{-1}\cdot\left[\tilde{f}(x) - f^*\right]. \]
Otherwise if $x^* = S^{x,i}$ for some $i\in\{0,\dots,d\}$. Then, we know
\[ \tilde{f}(x) - f^* = \sum_{i=0}^d~\lambda_i \left[ f\left(S^{x,i}\right) - f^* \right] \geq \sum_{i\neq k}~\lambda_i \cdot c = (1-\lambda_k)c \]
and
\begin{align*}
\|x - x^*\|_\infty &= \left\| \sum_{i=0}^d~\lambda_i S^{x,i} - x^* \right\|_\infty = \left\| \sum_{i=0}^d~\lambda_i \left(S^{x,i} - x^*\right) \right\|_\infty = \left\| \sum_{i\neq k}~\lambda_i \left(S^{x,i} - x^*\right) \right\|_\infty\\
&\leq \sum_{i\neq k}~\lambda_i\left\|S^{x,i} - x^*\right\|_\infty \leq \sum_{i\neq k}~\lambda_i = 1-\lambda_k,
\end{align*}
where the last inequality is because $S^{x,i}$ and $x^*$ are in the same hypercube $\mathcal{C}_y$. Combining the above two inequalities, it follows that
\[ \|x-x^*\|_2 \leq c^{-1}\cdot\left[\tilde{f}(x) - f^*\right], \]
which means that the WSM condition holds with $\kappa=c^{-1}$. Now we consider point $x\in[1,N]^d$ such that $\|x - x^*\|_\infty \geq 1$. We define
\[ \tilde{x} := x^* + \frac{x - x^*}{\| x - x^* \|_\infty} \]
to be the point on the segment $\overline{xx^*}$ such that $\|\tilde{x}-x^*\|_\infty = 1$. By the convexity of $\tilde{f}(x)$ and the WSM condition for point $\tilde{x}$, we know
\[ \tilde{f}(x) - f^* \geq \frac{\|x - x^*\|_\infty}{ \|\tilde{x} - x^*\|_\infty} \left[ \tilde{f}(\tilde{x}) - f^* \right] = \frac{\tilde{f}(\tilde{x}) - f^*}{ \|\tilde{x} - x^*\|_\infty} \cdot \|x - x^*\|_\infty \geq c^{-1}\cdot \|x - x^*\|_\infty, \]
which shows that the WSM condition holds with $\kappa=c^{-1}$. Hence, the WSM condition holds for all points in $[1,N]^d$ with $\kappa=c^{-1}$.
\hfill\Halmos\end{proof}

\revise{Using the WSM condition, we can accelerate Algorithm \ref{alg:multi-dim-ssgd} by dynamically shrinking the search space. To describe the shrinkage of search space, we define the $\ell_\infty$-neighbourhood of point $x$ as
\[ \mathcal{N}(x,a) := \{ y \in [1,N]^d: \|y - x\|_\infty \leq a \} \]
and the orthogonal projection onto $\mathcal{N}(x,a)$ as 
\[ \mathcal{P}_{x,a}(y) := (y \wedge (x+a)\mathbf{1}) \vee (x-a)\mathbf{1},\quad\forall x\in\mathbb{R}^d. \]
Now we give the adaptive SSGD algorithm for the PCS-IZ guarantee.
\bigskip
\begin{breakablealgorithm}
\caption{Adaptive SSGD method for the PCS-IZ guarantee}
\label{alg:multi-dim-ssgd-iz}
\begin{algorithmic}[1]
\Require{Model $\mathcal{X},\mathcal{B}_{\mathsf{Y}},F(x,\xi_x)$, optimality guarantee parameter $\delta$, indifference zone parameter $c$.}
\Ensure{An $(c,\delta)$-PCS-IZ solution $x^*$ to problem \eqref{eqn:obj}.}
\State Set the initial guarantee $\epsilon_0\leftarrow cN/4$.
\State Set the number of epochs $E\leftarrow \lceil \log_2(N) \rceil + 1$.
\State Set the initial search space $\mathcal{Y}_0 \leftarrow [1,N]^d$.
\For{$e=0,\dots,E-1$}
    \State Use Algorithm \ref{alg:multi-dim-ssgd} to get an $(\epsilon_e,\delta/(2E))$-PGS solution $x_e$ in $\mathcal{Y}_e$.
    \State Update guarantee $\epsilon_{e+1}\leftarrow \epsilon_e / 2$.
    \State Update the search space $\mathcal{Y}_{e+1}\leftarrow \mathcal{N}(x_e, 2^{-e-2}N)$.
\EndFor
\State Round $x_{E-1}$ to an integral point satisfying the $(c,\delta)$-PCS-IZ guarantee by Algorithm \ref{alg:multi-dim-round}.
\end{algorithmic}
\end{breakablealgorithm}
\bigskip
Basically, the algorithm finds a $(c/2,\delta)$-PGS solution and, with the assumption that the indifference zone parameter is $c$, the solution satisfies the $(c,\delta)$-PCS-IZ guarantee. We prove that the expected simulation cost of Algorithm \ref{alg:multi-dim-ssgd-iz} has only $O(\log(N))$ dependence on $N$.
\begin{theorem}\label{thm:ssgd5}
Suppose that Assumptions \ref{asp:1}-\ref{asp:5} hold. Then, Algorithm \ref{alg:multi-dim-ssgd-iz} returns a $(c,\delta)$-PCS-IZ solution. Furthermore, we have
\[ T(\delta,\mathcal{MC}_c) = O\left[ \frac{d^2\log(N)}{c^2}\log\left(\frac{1}{\delta}\right) + \frac{d^3\log(N)}{c^2} \log\left( \frac{d^2N}{\epsilon^3} \right) + \frac{d^3 \log(N) L^2}{c^2} \right] = \tilde{O}\left[ \frac{d^2\log(N)}{c^2}\log\left(\frac{1}{\delta}\right) \right]. \]
\end{theorem}}
%
\begin{proof}{Proof of Theorem \ref{thm:ssgd5}.}
We first prove the correctness of Algorithm \ref{alg:multi-dim-ssgd-iz}. Let $x^*$ be the minimizer of $f(x)$ and $f^*:=f(x^*)$. We use the induction method to prove that, for each epoch $e$, it holds
\[ \tilde{f}(x_e) - f^* \leq \epsilon_e \]
with probability at least $1-(e+1)\delta/(2E)$. For epoch $0$, the solution $x_0$ is $(\epsilon_0,\delta/(2E))$-PGS and we know
\[ \tilde{f}(x_0) - f^* \leq \epsilon_0 \]
holds with probability at least $1-\delta/(2E)$. We assume that the above event happens for the $(e-1)$-th epoch with probability at least $1-e\cdot \delta/(2E)$ and consider the case when this event happens. By Theorem \ref{thm:wsm}, function $\tilde{f}(x)$ satisfies the WSM condition with $\kappa=c^{-1}$. Hence, the intermediate solution $x_{e-1}$ satisfies
\[ \|x_{e-1} - x^*\|_\infty \leq c^{-1} \left[ \tilde{f}(x_{e-1}) - f^* \right] \leq c^{-1}\epsilon_{e-1} = c^{-1}\cdot 2^{-e+1}\epsilon_0 = 2^{-e-1}N, \]
which implies that  $x^* \in \mathcal{N}(x_{e-1},2^{-e-1}N)=\mathcal{N}_{e}$ and therefore $x^* \in \mathcal{N}_e$. For the epoch $e$, it holds 
\[ \tilde{f}(x_e) - f^* = \tilde{f}(x_e) - \min_{x\in\mathcal{N}_e}~\tilde{f}(x) \leq \epsilon_e \]
with probability at least $1-\delta/(2E)$. Hence, the above event happens with probability at least $1-\delta/(2E) - e\cdot\delta/(2E) = 1 - (e+1)\delta/(2E)$ for epoch $e$. By the induction method, we know the claim holds for all epochs. Considering the last epoch, we know 
\[ \tilde{f}(x_{E-1}) - f^* \leq \epsilon_{E-1} = 2^{-E+1}\epsilon_0 = 2^{-\lceil \log_2(N) \rceil-2} \cdot cN \leq 2^{-\log_2(N) -2} \cdot cN = c / 4 \]
holds with probability at least $1-\delta/2$. Thus, we know $x_{E-1}$ satisfies the $(c/4,\delta/2)$-PGS guarantee. By Theorem \ref{thm:round}, the integral solution returned by Algorithm \ref{alg:multi-dim-ssgd-iz} satisfies the $(c/2,\delta)$-PGS guarantee. Since the indifference zone parameter is $c$, the solution satisfying the $(c/2,\delta)$-PGS guarantee must satisfies the $(c,\delta)$-PCS-IZ guarantee.

Next, we estimate the asymptotic simulation cost of Algorithm \ref{alg:multi-dim-ssgd-iz}. By Theorem \ref{thm:ssgd1}, the simulation cost of epoch $e$ is at most
\[ \tilde{O}\left[ \frac{d^2\left(2^{-e}N\right)^2}{\epsilon_e^2}\log\left(\frac{E}{\delta}\right) \right] = \tilde{O}\left[ \frac{d^2\left(2^{-e}N\right)^2}{\left( 2^{-e-2} \cdot cN \right)^2}\log\left(\frac{E}{\delta}\right) \right] = \tilde{O}\left[ \frac{d^2}{c^2}\log\left(\frac{1}{\delta}\right) \right]. \]
Summing over $e=0,1,\dots,E-1$, we know the total simulation cost of $E$ epochs is at most
\[ \tilde{O}\left[ E\cdot \frac{d^2}{c^2}\log\left(\frac{1}{\delta}\right) \right] =  \tilde{O}\left[ \frac{d^2\log(N)}{c^2}\log\left(\frac{1}{\delta}\right) \right]. \]
By Theorem \ref{thm:round}, the simulation cost of the rounding process is at most
\[ \tilde{O}\left[ \frac{d}{c^2}\log\left(\frac{1}{\delta}\right) \right]. \]
Combining the two parts, we know the asymptotic simulation cost of Algorithm \ref{alg:multi-dim-ssgd-iz} is at most
\[ \tilde{O}\left[ \frac{d^2\log(N)}{c^2}\log\left(\frac{1}{\delta}\right) \right]. \]
\hfill\Halmos\end{proof}
\revise{Similarly, we can estimate the asymptotic simulation cost under Assumption \ref{asp:6} and we omit the proof.
\begin{theorem} \label{thm:ssgd6}
Suppose that Assumptions \ref{asp:1}-\ref{asp:5} and \ref{asp:6} hold. Then, Algorithm \ref{alg:multi-dim-ssgd-iz} returns a $(c,\delta)$-PCS-IZ solution. Furthermore, we have
\[ T(\delta,\mathcal{MC}_c) = \tilde{O}\left[ \frac{\beta(L+G)^2\log(N) + d}{c^2}\log\left(\frac{1}{\delta}\right) \right]. \]
\end{theorem}}

\section{Proofs in Section \ref{sec:multi-low}}

\subsection{Proof of Theorem \ref{thm:multi-dim-low}}
\label{ec:multi-dim-low}

\begin{proof}{Proof of Theorem \ref{thm:multi-dim-low}.}
In this proof, we change the feasible set to $\mathcal{X} = \{0,1,\dots,N\}^d$, where $N \geq 1$. We split the proof into three steps.

\paragraph{Step 1.} We first show that the construction of $L^\natural$-convex functions can be reduced to the construction of submodular functions. Equivalently, we show that any submodular function defined on $\{0,1\}^d$ can be extended to a $L^\natural$-convex function on $\mathcal{X}$ with the same convex extension after scaling. Let $g(x)$ be a submodular function defined on $\{0,1\}^d$ and $\tilde{g}(x)$ be the \lovasz extension of $g(x)$. We first extend the domain of the \lovasz extension to $[0,N]^d$ by scaling, i.e.,
\[ \tilde{f}(x) := \tilde{g}(x / N),\quad\forall x \in [0,N]^d. \]
Then, we define the discretization of $\tilde{f}(x)$ by restricting to the integer lattice
\[ f(x) := \tilde{f}(x),\quad\forall x \in \mathcal{X}. \]
We prove that ${f}(x)$ is a $L^\natural$-convex function. By Proposition 7.25 in~\cite{murota2003discrete}, we know the \lovasz extension $\tilde{g}(x)$ is a polyhedral $L$-convex function. Since the scaling operation does not change the $L$-convexity, we know $\tilde{f}(x)$ is also polyhedral $L$-convex. Hence, by Theorem 7.29 in~\citet{murota2003discrete}, the function $\tilde{f}(x)$ satisfies the $\mathrm{SBF}^\natural[\mathbb{R}]$ property, namely,
\[ \tilde{f}(p) + \tilde{f}(q) \geq \tilde{f}[(p-\alpha\mathbf{1}) \vee q] + \tilde{f}( p \wedge (q+\alpha\mathbf{1})),\quad\forall p,q\in[0,N]^d,~\alpha\geq0. \]
Restricting to the integer lattice, we know the $\mathrm{SBF}^\natural[\mathbb{Z}]$ property holds for $f(x)$, namely,
\[ {f}(p) + {f}(q) \geq {f}[(p-\alpha\mathbf{1}) \vee q] + {f}( p \wedge (q+\alpha\mathbf{1})),\quad\forall p,q\in\{0,\dots,N\}^d,~\alpha\in\mathbb{N}. \]
Finally, Theorem 7.7 in~\citet{murota2003discrete} shows that the $L^\natural$-convexity is equivalent to the $\mathrm{SBF}^\natural[\mathbb{Z}]$ property and therefore we know that $f(x)$ is a $L^\natural$-convex function. 

\paragraph{Step 2.} Next, we construct $d+1$ submodular functions on $\{0,1\}^d$ and extend them to $\mathcal{X}$ by the process defined in Step 1. The construction is based on the family of submodular functions defined in~\citet{graur2020new}. We denote $\mathcal{I} := \{0\}\cup[d]$. For each $i\in\mathcal{I}$, we define point $x^i\in\{0,1\}^d$ as
\[ x^i := \sum_{j=1}^i~e_j, \]
where $e_j$ is the $j$-th unit vector of $\mathbb{R}^d$. Index $j(x)$ is defined as the maximal index $j$ such that
\[ x_i = 1,\quad\forall i \in [j]. \]
If $x_1=0$, then we define $j(x) = 0$. Given $c:\mathcal{I} \mapsto \mathbb{R}$, we define a function on $\{0,1\}^d$ as 
\[ g^c(x) := \begin{cases}
-c(i)&\text{if }x = x^i \text{ for some } i\in\mathcal{I}\\
\left( \|x\|_1 - j(x) \right)\cdot\left( d + 2 - j(x) \right)&\text{otherwise}.\\
\end{cases} \]
By Lemma 6 in~\citet{graur2020new}, the function $g^c(x)$ is submodular if $c(i) \in\{0,1\}$. Using the fact that convex combinations of submodular functions are still submodular, we know that $g^c(x)$ is submodular for any $c$ such that $c(i) \in [0,1]$. Then, for each $i\in\mathcal{I}$, we construct
\[ c^i(0) := \frac12,\quad c^i(j) := \begin{cases} 1& j=i\\ 0& j\neq i \end{cases} ,\quad\forall j\in[d]. \]
%
We denote $g^i(x) := g^{c^i}(x)$ and let $f^i(x)$ be the extension of $6\epsilon\cdot g^i(x)$ on $\mathcal{X}$ by the process in Step 1. By the result in Step 1, we know that $f^i(x)$ is $L^\natural$-convex.

Next, we prove that $f^0(x)$ has disjoint set of $\epsilon$-optimal solutions with $f^i(x)$ for any $i\in[d]$. For each $f^i(x)$, we define the set of $\epsilon$-optimal solutions as
\[ \mathcal{X}_{\epsilon}^i := \{ x \in \mathcal{X} : f^i(x) - \min_y~f^i(y) \leq \epsilon \}. \]
We first consider $\mathcal{X}_{\epsilon}^0$. By the definition of $g^0(x)$, we know that
\begin{align}\label{eqn:multi-dim-low-1} f^0(x^0) = g^0(x^0) = -3\epsilon,\quad f^0(x) = g^0(x / N) \geq 0 ,\quad\forall x\in\{0,N\}^d\backslash\{x^0\}, \end{align}
which implies that
\[ \mathcal{X}_{\epsilon}^0 = \{ x \in \mathcal{X} : f^0(x) \leq -2\epsilon \}. \]
Since $f^0(x)$ is defined by the scaled \lovasz extension of $g^0(x)$, we have
\begin{align}\label{eqn:multi-dim-low-4} f^0(x) = N^{-1}\cdot\left[ (N-x_{\alpha(1)})f^0(S^0) + \sum_{i=1}^{d-1}(x_{\alpha(i)}-x_{\alpha(i+1)})f^0(S^i) + x_{\alpha(d)}f^0(S^d) \right], \end{align}
where $\alpha$ is a consistent permutation of $x/N$ and $S^i := N\cdot S^{x/N,i} \in\{0,N\}^d$ is the $i$-th neighbouring points of $x$ in the hypercube $\{0,N\}^d$. Using the relation in \eqref{eqn:multi-dim-low-1} and the fact $S^0 = x^0$, we get
\[ f^0(x) \geq N^{-1}\cdot (N-x_{\alpha(1)})f(S_0) = N^{-1}\cdot (N-x_{\alpha(1)})f(x^0) = -3\epsilon N^{-1}\cdot (N-x_{\alpha(1)}) . \]
Hence, for any point $x\in\mathcal{X}_{\epsilon}^0$, we have $N-x_{\alpha(1)} = N - \max_i~x_i \geq 2N/3$ and therefore
\begin{align}\label{eqn:multi-dim-low-2} \mathcal{X}_{\epsilon}^0 \subset \{x\in\mathcal{X} : N - \max_i~x_i \geq 2N/3 \} = \{x\in\mathcal{X} : \max_i~x_i \leq N/3 \}. \end{align}
Next, we consider $\mathcal{X}_{\epsilon}^i$ with $i\in[d]$. By the definition of $g^i(x)$, we have
\[ f^i(x^0) = g^i(x^0) = -3\epsilon,\quad f^i(x) = g^i(x) \geq -6\epsilon,\quad\forall x\in\{0,N\}^d\backslash\{x^0\}, \]
which implies that
\[ \mathcal{X}_{\epsilon}^i = \{ x \in \mathcal{X} : f^i(x) \leq -5\epsilon \}. \]
Since the consistent permutation and neighboring points only depend on the coordinate of $x$, we know
\begin{align}\label{eqn:multi-dim-low-5}
    f^i(x) &= N^{-1}\cdot\left[ (N-x_{\alpha(1)})f^i(S^0) + \sum_{i=1}^{d-1}(x_{\alpha(i)}-x_{\alpha(i+1)})f^i(S^i) + x_{\alpha(d)}f^i(S^d) \right]\\
    \nonumber&\geq N^{-1}\cdot \left[ -3\epsilon(N-x_{\alpha(1)}) -6\epsilon \sum_{i=1}^{d-1}(x_{\alpha(i)}-x_{\alpha(i+1)}) - 6\epsilon\cdot x_{\alpha(d)} \right]\\
    \nonumber&= N^{-1}\cdot \left[ -3\epsilon(N-x_{\alpha(1)}) -6\epsilon\cdot x_{\alpha(1)} \right] = -3\epsilon N^{-1}\cdot (N+x_{\alpha(1)}).
\end{align} 
Hence, the set $\mathcal{X}_{\epsilon}^i$ satisfies 
\begin{align}\label{eqn:multi-dim-low-3} \mathcal{X}_{\epsilon}^i \subset \{x\in\mathcal{X} : N + \max_i~x_i \geq 5N/3 \} = \{x\in\mathcal{X} : \max_i~x_i \geq 2N/3 \}. \end{align}
Combining the relations \eqref{eqn:multi-dim-low-2} and \eqref{eqn:multi-dim-low-3}, we know $\mathcal{X}_{\epsilon}^0\cap \mathcal{X}_{\epsilon}^i = \emptyset$ for all $i\in[d]$. 

\paragraph{Step 3.} Finally, we give a lower bound of $T_0(\epsilon,\delta,\mathcal{MC})$. For each $i\in\mathcal{I}$, we define $M_i$ as the model such that the objective function is $f^i(x)$ and the distribution at each point is Gaussian with variance $\sigma^2$. Same as the one-dimensional case, given a zeroth-order algorithm and a model $M$, we denote $N_x(\tau)$ as the number of times that $F(x,\xi_x)$ is simulated when the algorithm terminates. By definition, we have
\[ \mathbb{E}_{M}[\tau] = \sum_{x\in\mathcal{X}}~\mathbb{E}_{M}\left[N_x(\tau) \right], \]
where $\mathbb{E}_M$ is the expectation when the model $M$ is given. Similarly, we can define $\mathbb{P}_{M}$ as the probability when the model $M$ is given. Suppose $\mathcal{A}$ is an $[(\epsilon,\delta)\text{-PGS},\mathcal{MC}]$-algorithm and let $\mathcal{E}$ be the event that the solution returned by $\mathcal{A}$ is in the set $\mathcal{X}_\epsilon^0$. Since $\mathcal{X}_\epsilon^0 \cap \mathcal{X}_\epsilon^i=\emptyset$ for all $i\in[d]$, we know 
\[ \mathbb{P}_{M_0}[\mathcal{E}] \geq 1 - \delta,\quad \mathbb{P}_{M_i}[\mathcal{E}] \leq \delta,\quad\forall i\in[d]. \]
Using the information-theoretical inequality \eqref{eqn:one-dim-low1}, it holds
\begin{align}\label{eqn:multi-low3} \sum_{x\in\mathcal{X}}~\mathbb{E}_{M_0}\left[N_x(\tau)\right]\mathrm{KL}(\nu_{0,x},\nu_{i,x}) \geq d(\mathbb{P}_{M_0}(\mathcal{E}),\mathbb{P}_{M_i}(\mathcal{E})) \geq d(1-\delta,\delta) \geq \log\left(\frac{1}{2.4\delta}\right), \end{align}
where $d(x,y):=x\log(x/y)+(1-x)\log((1-x)/(1-y))$, $\mathrm{KL}(\cdot,\cdot)$ is the KL divergence and $\nu_{i,x}$ is the distribution of $F^i(x,\xi_x)$. Since the distributions $\nu_{i,x}$ are Gaussian with variance $\sigma^2$, the KL divergence can be calculated as
\[ \mathrm{KL}(\nu_{0,x},\nu_{i,x}) = 2\sigma^{-2}\left( f^0(x) - f^i(x) \right)^2.
\]
Now we estimate $f^0(x) - f^i(x)$ for all $i\in[d]$. By equations \eqref{eqn:multi-dim-low-4} and \eqref{eqn:multi-dim-low-5}, we get
\begin{align}\label{eqn:multi-dim-low-6} f^0(x) - f^i(x) &= N^{-1}\bigg[ (N-x_{\alpha(1)})\left( f^0(S^0) - f^i(S^0) \right)\\
&\nonumber \hspace{8em}+ \sum_{j=1}^{d-1}(x_{\alpha(j)}-x_{\alpha(j+1)})\left( f^0(S^j) - f^i(S^j) \right) + x_{\alpha(d)}\left( f^0(S^d) - f^i(S^d) \right) \bigg], \end{align}
where $\alpha$ is a consistent permutation of $x/N$ and $S^i$ is the $i$-th neighboring point of $x$ in hypercube $\{0,N\}^d$. By the definition of $f^0(x)$ and $f^i(x)$, we have
\[ f^0(x) - f^i(x) = \begin{cases} 6\epsilon&\text{if } x = x^i\\ 0&\text{otherwise}. \end{cases} \]
Since $\left\|x^i\right\|_1 = i$ and $\left\| S^j \right\|_1 = j$ for all $j\in\mathcal{I}$, we know
\[ f^0(S^i) - f^i(S^i) \leq 6\epsilon, \quad f^0(S^j) - f^i(S^j) = 0 ,\quad\forall j\in\mathcal{I} \backslash \{i\}. \]
Substituting into equation \eqref{eqn:multi-dim-low-6}, it follows that
\[ f^0(x) - f^i(x) \leq \begin{cases}\left( 6\epsilon \cdot (x_{\alpha(i)} - x_{\alpha(i+1)}) \right)^2&\text{if }i\in[d-1]\\\left( 6\epsilon \cdot x_{\alpha(d)} \right)^2&\text{if }i=d. \end{cases} \]
Hence, the KL divergence is bounded by
\[ \mathrm{KL}(\nu_{0,x},\nu_{i,x}) = 2\sigma^{-2}\left( f^0(x) - f^i(x) \right)^2 \leq \begin{cases} 72\sigma^{-2}N^{-2}\epsilon^2 \left( (x_{\alpha(i)} - x_{\alpha(i+1)}) \right)^2&\text{if }i\in[d-1]\\ 72\sigma^{-2}N^{-2}\epsilon^2 x^2_{\alpha(d)} &\text{if }i=d. \end{cases} \]
Substituting the KL divergence into inequality \eqref{eqn:multi-low3} and summing over $i=1,\dots,d$, we get
\begin{align}\label{eqn:multi-low4} \sum_{x\in\mathcal{X}}~\mathbb{E}_{M_0}\left[N_x(\tau)\right]\cdot 72\sigma^{-2}N^{-2}\epsilon^2\left[\sum_{i=1}^{d-1}(x_{\alpha(i)}-x_{\alpha(i+1)})^2 + x_{\alpha(d)}^2 \right] \geq d\log\left(\frac{1}{2.4\delta}\right). \end{align}
Since $\alpha$ is the consistent permutation of $x$, we know
\[ 0 \leq x_{\alpha(i)}-x_{\alpha(i+1)} \leq N ,\quad\forall i \in[d-1] \]
and therefore
\[ \sum_{i=1}^{d-1}(x_{\alpha(i)}-x_{\alpha(i+1)})^2 + x_{\alpha(d)}^2 \leq N \cdot \left(\sum_{i=1}^{d-1}(x_{\alpha(i)}-x_{\alpha(i+1)}) + x_{\alpha(d)}\right) = N\cdot x_{\alpha(1)} \leq N^2. \]
Combining with inequality \eqref{eqn:multi-low4}, we get
\[ \sum_{x\in\mathcal{X}}~\mathbb{E}_{M_0}\left[N_x(\tau)\right]\cdot 72\epsilon^2\sigma^{-2} \geq d\log\left(\frac{1}{2.4\delta}\right), \]
which implies that
\[ \mathbb{E}_{M_0}[\tau] = \sum_{x\in\mathcal{X}}~\mathbb{E}_{M_0}\left[ N_x(\tau) \right] \geq \frac{d\sigma^2}{72\epsilon^2}\log\left(\frac{1}{2.4\delta}\right). \]
\hfill\Halmos\end{proof}

\revisee{\subsection{Proof of Theorem \ref{thm:multi-dim-low-iz}}

\begin{proof}{Proof of Theorem \ref{thm:multi-dim-low-iz}.}

We consider the submodular functions $g^0(x),\dots,g^d(x)$ constructed in the proof of Theorem \ref{thm:multi-dim-low}. We want to construct objective functions $f^0(x),\dots,f^d(x)$ on $\mathcal{X}=[N]^d$ such that
\begin{align*}
    f^i(x) = \begin{cases} 
    6c \cdot g^i(x-1) + h(x) & \text{if } x\in[2]^d\\
    h(x) & \text{if } x\in[N]^d \backslash [2]^d,\end{cases}\quad \forall i\in\{0,\dots,d\},
\end{align*}
where $(x-1)_j := x_j - 1$ for all $j\in[d]$ and $h(x)$ is a suitably designed function. Similar to the proof of Theorem \ref{thm:multi-dim-low}, we apply the information-theoretical inequality \eqref{eqn:one-dim-low1} to pairs $f^0(x)$ and $f^i(x)$ for all $i\in[d]$. Since the objective function values for $f^0(x)$ and $f^i(x)$ are equal for all $x\in[N]^d \backslash [2]^d$, the terms with respect to those $x$ will disappear and we only need to analyze the terms with $x\in[2]^d$. Now, using the same analysis and notations as Theorem \ref{thm:multi-dim-low}, we get the desired lower bound
\[ \mathbb{E}_{M_0}[\tau] = \sum_{x\in\mathcal{X}}~\mathbb{E}_{M_0}\left[ N_x(\tau) \right] \geq \frac{d\sigma^2}{72c^2}\log\left(\frac{1}{2.4\delta}\right). \]
Therefore, it remains to chose a suitable function $h(x)$ such that $f^i(x)$ are $L^\natural$-convex on the whole feasible set $\mathcal{X}$. We define
\[ M := \max_{x\in\{0,1\}^d,~ i\in\{0,\dots,d\} } 6c \cdot |g^i(x)|. \]
The extended function $f^i(x)$ is defined by
\begin{align*}
h(x) := 4M \sum_{j=1}^d (x_j-1)(x_j-2) + 2M \max_j x_j + 2M \sum_{j=1}^d \mathbf{1}(x_j = 1),\quad x\in [N]^d,
\end{align*}
where $\mathbf{1}(\cdot)$ is the indicator function. The function $h(x)$ is the sum of two $L^\natural$-convex functions \citep{murota2003discrete} and thus is a $L^\natural$-convex function. We prove that for each $i\in[d]$, the function $f^i(x)$ is $L^\natural$-convex, namely, it satisfies the discrete mid-point convexity. Suppose that $x,y\in[N]^d$ are two feasible points. We consider three different cases.
\paragraph{Case I.} We first consider the case when $x,y\in[2]^d$. In this case, the fact that $[2]^d$ is a $L^\natural$-convex set implies that
\[ \left\lceil \frac{x+y}{2} \right\rceil, \left\lfloor \frac{x+y}{2} \right\rfloor \in [2]^d. \]
Since the function $6c \cdot g^i(x) + h(x)$ is $L^\natural$-convex, the discrete mid-point convexity holds for $x$ and $y$.

\paragraph{Case II.} We consider the case when $x,y \notin [2]^d$. Since the function $\sum_{j} \mathbf{1}(x_j = 1)$ is $L^\natural$-convex, it satisfies the discrete mid-point convexity and we can safely ignore its effect in this case. If $\lfloor (x+y)/2 \rfloor, \lceil (x+y)/2 \rceil \notin [2]^d$, then the $L^\natural$-convexity of $h(x)$ implies the discrete mid-point convexity of points $x$ and $y$. Now, we consider the case when $\lfloor (x+y)/2 \rfloor, \lceil (x+y)/2 \rceil \in [2]^d$. Since at least one component of $x$ and $y$ is larger than $2$, it holds that
\begin{align*} 
f^i(x) \geq 4M\cdot (3-1)(3-2) + 3M = 11M,\quad f^i(y) \geq 11M.
\end{align*}
Hence, we get
\[ f^i(x) + f^i(y) \geq 22M \geq f^i\left( \left\lceil \frac{x+y}{2}\right\rceil \right) + f^i\left( \left\lfloor \frac{x+y}{2}\right\rfloor \right). \]
The only remaining case is when
\[ \left\lceil \frac{x+y}{2}\right\rceil \notin [2]^d,\quad \left\lfloor \frac{x+y}{2}\right\rfloor \in [2]^d. \]
In this case, we have
\[ \left\lfloor \frac{x_j+y_j}{2}\right\rfloor \leq 2, \quad \forall j\in[d],\quad \max_j \left\lceil \frac{x_j+y_j}{2}\right\rceil \geq 3, \]
which implies that
\[ x_j+y_j \leq \max_j (x_j+y_j) = 5,\quad \forall j\in[d] \]
and
\[ \max_j x_j\geq 3,\quad \max_j y_j \geq 3,\quad \max_j \left\lceil \frac{x_j+y_j}{2} \right\rceil = 3,\quad \max_j \left\lfloor \frac{x_j+y_j}{2} \right\rfloor = 2. \]
Let
\begin{align}
\label{eqn:case2-1}
\mathcal{J}_x := \{j\in[d]:~ x_j \geq 3\},\quad \mathcal{J}_y := \{j\in[d]:~ y_j \geq 3\},\quad \mathcal{J} := \{j\in[d]:~ x_j + y_j = 5\}.
\end{align}
The analysis above gives
\[ \mathcal{J} \subset \mathcal{J}_x \cup \mathcal{J}_y,\quad \mathcal{J}_x \cap \mathcal{J}_y = \emptyset. \]
Hence, we know
\begin{align*} 
\sum_j(x_j-1)(x_j-2) + \sum_j(y_j-1)(y_j-2) &\geq 2|\mathcal{J}_x| + 2|\mathcal{J}_y|,\\
\sum_j \left( \left\lceil\frac{x_j+y_j}{2} \right\rceil-1 \right)\left( \left\lceil\frac{x_j+y_j}{2} \right\rceil-2 \right) + \sum_j \left( \left\lfloor\frac{x_j+y_j}{2} \right\rfloor-1 \right)\left( \left\lfloor\frac{x_j+y_j}{2} \right\rfloor-2 \right) &= 2|\mathcal{J}|.
\end{align*}
Combining with inequality \eqref{eqn:case2-1}, we get
\[ h(x)+h(y) - h\left(\left\lceil\frac{x_j+y_j}{2} \right\rceil\right)  - h\left(\left\lfloor\frac{x_j+y_j}{2} \right\rfloor\right) \geq 8M(|\mathcal{J}_x| + |\mathcal{J}_y| - |\mathcal{J}|) + 2M \geq 2M. \]
Therefore, it holds that
\begin{align*}
f^i(x) + f^i(y) &= h(x) + h(y) \geq h\left(\left\lceil\frac{x_j+y_j}{2} \right\rceil\right) + h\left(\left\lfloor\frac{x_j+y_j}{2} \right\rfloor\right) + 2M\\
&\geq h\left(\left\lceil\frac{x_j+y_j}{2} \right\rceil\right) + h\left(\left\lfloor\frac{x_j+y_j}{2} \right\rfloor\right) + 6c \cdot g^i(x-1)\\
&= f^i\left(\left\lceil\frac{x_j+y_j}{2} \right\rceil\right) + f^i\left(\left\lfloor\frac{x_j+y_j}{2} \right\rfloor\right).
\end{align*}

\paragraph{Case III.} Finally, we consider the case when $x\in[2]^d$ and $y\notin [2]^d$. If
\[ \left\lceil \frac{x+y}{2} \right\rceil, \left\lfloor \frac{x+y}{2} \right\rfloor \in [2]^d, \]
we know
\[ f^i(y) + f^i(x) \geq 11M - M > 6M \geq f^i\left(\left\lceil\frac{x_j+y_j}{2} \right\rceil\right) + f^i\left(\left\lfloor\frac{x_j+y_j}{2} \right\rfloor\right). \]
Next, for the case where
\[ \left\lceil \frac{x+y}{2} \right\rceil, \left\lfloor \frac{x+y}{2} \right\rfloor \notin [2]^d, \]
we get
\[  \max_j \frac{x_j+y_j}{2} \geq 3, \]
which implies that 
\[ \max_j y_j \geq 4. \]
Considering the component $j$ such that $y_j \geq 4$, it follows that
\begin{align*}
&\left( \left\lceil\frac{x_j+y_j}{2} \right\rceil-1 \right)\left( \left\lceil\frac{x_j+y_j}{2} \right\rceil-2 \right) + \left( \left\lfloor\frac{x_j+y_j}{2} \right\rfloor-1 \right)\left( \left\lfloor\frac{x_j+y_j}{2} \right\rfloor-2 \right)\\
\leq & \left( \frac{x_j+y_j+1}{2} -1 \right)\left( \frac{x_j+y_j+1}{2}-2 \right) + \left( \frac{x_j+y_j}{2}-1 \right)\left( \frac{x_j+y_j}{2}-2 \right)\\
\leq & \left( \frac{y_j+3}{2} -1 \right)\left( \frac{y_j+3}{2}-2 \right) + \left( \frac{y_j+2}{2}-1 \right)\left( \frac{y_j+2}{2}-2 \right) = \frac12 y_j^2 - \frac12 y_j - \frac14.
\end{align*}
Combining with the $L^\natural$-convexity of functions $(x_k-1)(y_k-2)$ for each $k\in[d]$ and $\max_k x_k$, we get
\begin{align*}
&h(x) + h(y) - h\left(\left\lceil\frac{x+y}{2} \right\rceil\right) - h\left(\left\lfloor\frac{x+y}{2} \right\rfloor\right) \geq h_j(x) + h_j(y) - h_j\left(\left\lceil\frac{x+y}{2} \right\rceil\right) - h_j\left(\left\lfloor\frac{x+y}{2} \right\rfloor\right)\\
\geq & 4M(x_j-1)(x_j-2) + 4M(y_j-1)(y_j-2)\\
&\hspace{6em} - 4M\left( \left\lceil\frac{x_j+y_j}{2} \right\rceil-1 \right)\left( \left\lceil\frac{x_j+y_j}{2} \right\rceil-2 \right) - 4M\left( \left\lfloor\frac{x_j+y_j}{2} \right\rfloor-1 \right)\left( \left\lfloor\frac{x_j+y_j}{2} \right\rfloor-2 \right)\\
\geq & 0 + 4M\left[ y_j^2 - 3y_j + 2 - \left(\frac12 y_j^2 - \frac12 y_j - \frac14\right) \right] = M(2y_j^2 - 10y_j + 9) \geq M.
\end{align*}
Therefore, we have
\begin{align*}
f^i(x) + f^i(y) &= h(x) + h(y) + 6c \cdot g^i(x-1) \geq h(x) + h(y) - M \\
&\geq h\left(\left\lceil\frac{x+y}{2} \right\rceil\right) + h\left(\left\lfloor\frac{x+y}{2} \right\rfloor\right) + M - M\\
&\geq h\left(\left\lceil\frac{x+y}{2} \right\rceil\right) + h\left(\left\lfloor\frac{x+y}{2} \right\rfloor\right) = f^i\left(\left\lceil\frac{x+y}{2} \right\rceil\right) + f^i\left(\left\lfloor\frac{x+y}{2} \right\rfloor\right).
\end{align*}
Now, we consider the last case where
\[ \left\lceil \frac{x+y}{2} \right\rceil \notin [2]^d,\quad  \left\lfloor \frac{x+y}{2} \right\rfloor \in [2]^d. \]
Similar to Case II, we can prove that
\[ x_j + y_j \leq 5,\quad \forall j\in[d]. \]
If it holds that
\[ h(y) > h\left(\left\lceil\frac{x+y}{2} \right\rceil\right), \]
we can utilize that fact that $y,\lceil\frac{x+y}{2}\rceil\in\mathbb{Z}^d$ to prove
\[ h(y) \geq h\left(\left\lceil\frac{x+y}{2} \right\rceil\right) + 2M, \]
which leads to
\begin{align*}
f^i(x) + f^i(y) &\geq h(x) + h(y)- M \geq 0 + h\left(\left\lceil\frac{x+y}{2} \right\rceil\right) + 2M - M\\
&\geq h\left(\left\lceil\frac{x+y}{2} \right\rceil\right) + 6c \cdot g^i\left(\left\lfloor\frac{x+y}{2} \right\rfloor\right) = h\left(\left\lceil\frac{x+y}{2} \right\rceil\right) + 0 + 6c \cdot g^i\left(\left\lfloor\frac{x+y}{2} \right\rfloor\right)\\
&\geq h\left(\left\lceil\frac{x+y}{2} \right\rceil\right) + h\left(\left\lfloor\frac{x+y}{2} \right\rfloor\right) + 6c \cdot g^i\left(\left\lfloor\frac{x+y}{2} \right\rfloor\right)\\
&= f^i\left(\left\lceil\frac{x+y}{2} \right\rceil\right) + f^i\left(\left\lfloor\frac{x+y}{2} \right\rfloor\right).
\end{align*}
Therefore, we focus on the case when
\begin{align}\label{eqn:case3-2}
h(y) \leq h\left(\left\lceil\frac{x+y}{2} \right\rceil\right).
\end{align}
First, using the facts that $x\in[2]^d$ and $y\notin[2]^d$, it is easy to prove that
\begin{align}\label{eqn:case3-1}
\max_j y_j \geq \max_j \left\lceil\frac{x_j+y_j}{2}\right\rceil = 3,\quad \mathbf{1}(y_j = 1) \geq \mathbf{1}\left( \left\lceil\frac{x_j+y_j}{2}\right\rceil = 1 \right),\quad \forall j\in[d].
\end{align}
Moreover, using the condition that $x_j\in[2]$, it holds that
\[ \left|y_j -\frac32 \right| \geq \left| \left\lceil \frac{x_j+y_j}{2} \right\rceil - \frac32 \right|, \quad \forall j\in[d], \]
which implies that
\[ \sum_j (y_j-1)(y_j-2) \geq \sum_j \left( \left\lceil \frac{x_j+y_j}{2} \right\rceil -1\right) \left( \left\lceil \frac{x_j+y_j}{2} \right\rceil -2 \right). \]
Combining with inequalities in \eqref{eqn:case3-1}, we get
\[ h(y) \geq h\left(\left\lceil\frac{x+y}{2} \right\rceil\right). \]
In addition, the equality of the above inequality holds in combination with our assumption in \eqref{eqn:case3-2}. The equality conditions imply that
\[ \max_j y_j = 3,\quad \mathbf{1}(y_j = 1) = \mathbf{1}\left( \left\lceil\frac{x_j+y_j}{2}\right\rceil = 1 \right),\quad \left|y_j -\frac32 \right| = \left| \left\lceil \frac{x_j+y_j}{2} \right\rceil - \frac32 \right|,\quad \forall j\in[d]. \]
The above three conditions imply that
\[ y = \left\lceil\frac{x_j+y_j}{2}\right\rceil. \]
Utilizing the identity
\[ x + y = \left\lceil\frac{x_j+y_j}{2}\right\rceil + \left\lfloor\frac{x_j+y_j}{2}\right\rfloor, \]
we know 
\[ x = \left\lfloor\frac{x_j+y_j}{2}\right\rfloor. \]
In this case, the discrete mid-point convexity holds evidently.

\hfill\Halmos\end{proof}
}%

\section{Proofs in Section \ref{sec:first-order}}


\subsection{Proof of Theorem \ref{thm:biased-1}}
\label{ec:biased-1}

First, the following lemma shows that the lower bound of $\mathbb{E}[H_x(y,\eta_y)]$ in $\mathcal{N}_x$ implies a global lower bound of $f(x)$.
\begin{lemma}\label{lem:first-order-1}
Suppose that Assumptions \ref{asp:1}-\ref{asp:8} hold. If we have
\[ \mathbb{E}[H_x(y,\eta_y)] \geq -b,\quad\forall y\in\mathcal{N}_x \]
for some constant $b\geq 0$, then it holds
\[ f(y) \geq f(x) -\frac{2N}{1-a} \cdot b,\quad\forall y\in\mathcal{X}. \]
\end{lemma}
%
\begin{proof}{Proof of Lemma \ref{lem:first-order-1}.}
The proof follows the same framework as Theorem \ref{thm:wsm}. We first consider points $y\in\mathcal{N}_x$. By the condition of this lemma and inequality \eqref{eqn:bias}, we have
\[ f({y}) - f(x) \geq (1-a)^{-1} \cdot \mathbb{E}[H_x(y,\eta_y)] \geq -(1-a)^{-1}(1-a)^{-1}b. \]
Next, we consider point $y\in\mathcal{X}$ such that $\|y-x\|_\infty \leq 1$. Then, there exists two disjoint sets $\mathcal{S}_1,\mathcal{S}_2\subset[d]$ such that 
\[ y = x + e_{\mathcal{S}_1} - e_{\mathcal{S}_2}, \]
where $e_{\mathcal{S}} := \sum_{i\in\mathcal{S}}~e_i$ is the indicator vector of $\mathcal{S}$. Using the translation submodularity of $f(x)$, we have
\[ f(y) \geq f(x + e_{\mathcal{S}_1}) - f(x) + f(x - e_{\mathcal{S}_2}) - f(x) \geq -2(1-a)^{-1}b. \]
Now, let $\tilde{f}(x)$ be the convex extension of $f(x)$ defined in \eqref{eqn:convex-extension} and consider $y\in[1,N]^d$ such that $\|y-x\|_\infty\leq1$. We consider the hypercube $\mathcal{C}_z$ that contains both $x$ and $y$ and denote $S^{y,i}$ as the $i$-th neighboring point of $y$ in $\mathcal{C}_z$. Recalling the expression \eqref{eqn:submodular-1}, we know $f(y)$ is a convex combination of $f(S^{y,0}),\dots,f(S^{y,d})$. Since the neighboring point $S^{y,i}\in\mathcal{X}$ satisfies $\left\|S^{y,i}-x\right\|_\infty \leq 1$, we know
\[ \tilde{f}(y) \geq \min_{i\in\{0\}\cup[d]}~f(S^{y,i}) \geq -2(1-a)^{-1}b. \]
Finally, we consider points $y\in[1,N]^d$. We define
\[ \tilde{y} := x + \frac{y - x}{ \|y - x\|_\infty }. \]
Then, we know $\|\tilde{y}-x\|_\infty = 1$ and $\tilde{f}\tilde{y}) \geq -2(1-a)^{-1}b$. By the convexity of $\tilde{f}(x)$, 
\[ \tilde{f}({y}) - f(x) \geq \frac{\|y - x\|_\infty }{ \|\tilde{y} - x\|_\infty } \left[ \tilde{f}(\tilde{y}) - f(x) \right] \geq -N \cdot 2(1-a)^{-1}b = -\frac{2N}{1-a} \cdot b. \]
\hfill\Halmos\end{proof}
Hence, to find an $(\epsilon,\delta)$-PGS solution, it suffices to find point $x$ such that
\[ \mathbb{E}[H_x(y,\eta_y)] \geq -\frac{(1-a)\epsilon}{2N},\quad\forall y\in\mathcal{N}_x \]
holds with probability at least $1-\delta$.

\begin{proof}{Proof of Theorem \ref{thm:biased-1}.}
Let $x^*$ be a minimizer of $f(x)$. We use the induction method to prove that
\begin{align}\label{eqn:biased-2} f(x^{e,0}) - f(x^*) \leq 2^{-e} \cdot NL,\quad\forall e\in\{0,1,\dots,E\} \end{align}
holds with probability at least $1-e\cdot \delta/E$. Using Assumption \ref{asp:5}, we have
\[ f(x^{0,0}) - f(x^*) \leq L \cdot \| x^{0,0} - x^* \|_\infty \leq NL, \]
which means the induction assumption holds for epoch $0$. Suppose the induction assumption is true for epochs $0,1,\dots,e-1$. Now we consider epoch $e$. We assume the event
\[ f(x^{e-1,0}) - f(x^*) \leq 2^{-e+1} \cdot NL \]
happens in the following proof, which has probability at least $1-(e-1)\delta/E$. We suppose epoch $e$ terminates after $T_e$ iterations and discuss by two different cases.

\paragraph{Case I.} We first consider the case when $T_e \leq T-1$. This event happens only if epoch $e-1$ is terminated by the condition in Line 13, i.e., 
\[ \hat{H}_{x^{e-1,T_e-1}}(y) > -2h_{e-1},\quad\forall y \in\mathcal{N}_{x^{e-1,T_e-1}}. \]
By the definition of confidence intervals, it follows that
\[ \min_{y\in\mathcal{N}_{x^{e-1,T_e-1}}}~\mathbb{E}[H_{x^{e-1,T_e-1}}(y,\eta_y)] \geq -3h_{e-1} = - 3 \cdot 2^{-e+1} h_{0} = - 2^{-e-1} \cdot (1-a)L \]
holds with probability at least $1-\delta/(ET)$. Then, considering the results of Lemma \ref{lem:first-order-1}, we know
\[ f(x^{e,0}) - f(x^*) = f(x^{e-1,T_e-1}) - f(x^*) \leq \frac{2N}{1-a} \cdot 2^{-e-1} \cdot (1-a)L = 2^{-e} \cdot NL \]
happens with the same probability. Combining with the induction assumption for epoch $e-1$, the above event happens with probability at least $1-(e-1)\delta/E - \delta/(ET) \geq 1-e\cdot\delta/E$ and the induction assumption holds for epoch $e$.

\paragraph{Case II.} Next, we consider the case when $T_e=T$. We estimate the object function decrease for each iteration $t=0,1,\dots,T-1$. By the definition of confidence intervals, it holds
\[ \mathbb{E}[H_{x^{e-1,t}}(y,\eta_y)] \leq -h_{e-1} \]
with probability at least $1-\delta/(ET)$, where $y=x^{e-1,t+1}$ is the next iteration point. Recalling inequality \eqref{eqn:bias}, we know
\[ f(x^{e-1,t+1}) - f(x^{e-1,t}) \leq -(1+a)^{-1}h_{e-1}  \]
happens with probability at least $1-\delta/(ET)$. We assume the above event happens for all $t=1,2,\dots,T$, which has probability at least $1-T\cdot \delta/(ET) = 1- \delta/E$. Then, we have
\begin{align*} 
f(x^{e,0}) - f(x^{e-1,0}) &= f(x^{e-1,T}) - f(x^{e-1,0}) = \sum_{t=1}^T~f(x^{e-1,t}) - f(x^{e-1,t-1})\\
&\leq - T \cdot (1+a)^{-1}h_{e-1} = -2^{-e}\cdot NL
\end{align*}
holds with the same probability. Combining with the induction assumption for epoch $e-1$, we know
\[ f(x^{e,0}) - f(x^*) \leq 2^{-e}\cdot NL \]
happens with probability at least $1-(e-1)\delta/E-\delta/E = 1-e\cdot\delta/E$. This means the induction assumption holds for epoch $e$. 

Combining the above two cases, we know the induction assumption is true for epoch $e$. By the induction method, we know inequality \eqref{eqn:biased-2} holds for epoch $E$, i.e.,
\[ f(x^{E,0}) - f(x^*) \leq 2^{-E} \cdot NL = 2^{-\lceil \log_2(NL/\epsilon) \rceil} \cdot NL \leq 2^{- \log_2(NL/\epsilon) } \cdot NL = \epsilon \]
with probability at least $1-E\cdot\delta / E = 1-\delta$. Hence, Algorithm \ref{alg:multi-dim-biased} returns an $(\epsilon,\delta)$-PGS solution.

Next, we estimate the simulation cost of Algorithm \ref{alg:multi-dim-biased}. For each iteration in epoch $e$, Hoeffding bound implies that simulating $H_x(y,\eta_y)$ for
\[ \frac{2\tilde{\sigma}^2}{h_{e}^2}\log\left(\frac{2ET}{\delta}\right) = 2^{2e} \cdot \frac{288\tilde{\sigma}^2}{(1-a)^2L^2}\log\left(\frac{2ET}{\delta}\right) \]
times is sufficient to ensure that the $1-\delta/(ET)$ confidence half-width is at most $T_e$. Since the simulation cost of each evaluation of all $H_x(y,\eta_y)$ is $\gamma$, the simulation cost of epoch $e$ is at most
\[ \gamma \cdot T \cdot 2^{2e} \cdot \frac{288\tilde{\sigma}^2}{(1-a)^2L^2}\log\left(\frac{2ET}{\delta}\right) = 2^{2e} \cdot \frac{1728(1+a)\tilde{\sigma}^2\gamma N}{(1-a)^3L^2}\log\left(\frac{2ET}{\delta}\right). \]
Summing over $e=0,1,\dots,E-1$, we get the bound of total simulation cost as
\begin{align*} 
&\sum_{e=0}^{E-1}~2^{2e} \cdot \frac{1728(1+a)\tilde{\sigma}^2\gamma N}{(1-a)^3L^2}\log\left(\frac{2ET}{\delta}\right) = \left( 4^E - 1 \right) \cdot \frac{576(1+a)\tilde{\sigma}^2\gamma N}{(1-a)^3L^2}\log\left(\frac{2ET}{\delta}\right)\\
\leq & 4^{\lceil \log_2(NL/\epsilon) \rceil} \cdot \frac{576(1+a)\tilde{\sigma}^2\gamma N}{(1-a)^3L^2}\log\left(\frac{2ET}{\delta}\right) \leq 4^{ \log_2(NL/\epsilon) +1} \cdot \frac{576(1+a)\tilde{\sigma}^2\gamma N}{(1-a)^3L^2}\log\left(\frac{2ET}{\delta}\right)\\
=& \frac{4N^2L^2}{\epsilon^2} \cdot \frac{576(1+a)\tilde{\sigma}^2\gamma N}{(1-a)^3L^2}\log\left(\frac{2ET}{\delta}\right) = \frac{2304(1+a)\tilde{\sigma}^2\gamma N^3}{(1-a)^3\epsilon^2}\log\left(\frac{2ET}{\delta}\right).
\end{align*}
When $\delta$ is small enough, the asymptotic simulation cost is at most
\[  \frac{2304(1+a)\tilde{\sigma}^2\gamma N^3}{(1-a)^3\epsilon^2}\log\left(\frac{2ET}{\delta}\right) = \tilde{O}\left[\frac{\gamma N^3}{(1-a)^3\epsilon^2}\log\left(\frac{1}{\delta}\right)\right]. \]

\hfill\Halmos\end{proof}

\subsection{First-order algorithms for the PCS-IZ case}
\label{ec:biased-2}

\revise{
We first give the stochastic steepest descent method for the PCS-IZ guarantee in Algorithm \ref{alg:multi-dim-biased-iz}.
\bigskip
\begin{breakablealgorithm}
\caption{Adaptive stochastic steepest descent method for PCS-IZ guarantee}
\label{alg:multi-dim-biased-iz}
\begin{algorithmic}[1]
\Require{Model $\mathcal{X},\mathcal{B}_{\mathsf{Y}},F(x,\xi_x)$, optimality guarantee parameter $\delta$, indifference zone parameter $c$, biased subgradient estimator $H_x(y,\eta_y)$, bias ratio $a$.}
\Ensure{A $(c,\delta)$-PCS-IZ solution $x^*$ to problem \eqref{eqn:obj}.}
\State Set the initial confidence half-width threshold $h \leftarrow (1-a)c/12$.
\State Set maximal number of iterations $T\leftarrow (1+a)/(1-a) \cdot 12N$.
\State Use Algorithm \ref{alg:multi-dim-biased} to find an $(Nc,\delta/2)$-PGS solution.
\For{ $ t = 0,1,\dots, T-1 $ }
    \Repeat{ simulate $H_{x^{t}}(y,\eta_y)$ for all $y\in\mathcal{N}_{x^{t}}$ }
        \State Compute the empirical mean $\hat{H}_{x^{t}}(y)$ using all simulated samples for all $y\in\mathcal{N}_{x^{t}}$.
        \State Compute the $1-\delta/(2T)$ confidence interval
        \[ \left[ \hat{H}_{x^{t}}(y) - h_{y}, \hat{H}_{x^{t}}(y) + h_{y} \right] ,\quad\forall y\in\mathcal{N}_{x^{t}}. \]
    \Until{ the confidence half-width $h_y \leq h$ for all $y\in\mathcal{N}_{x^{t}}$ }
    \If{ $ \hat{H}_{x^{t}}(y) \leq -2h $ for some $y\in\mathcal{N}_{x^{t}}$ }\Comment{This step takes $2^{d+1}$ arithmetic operations.}
        \State Update $x^{t+1}\leftarrow y$.
    \ElsIf{ $ \hat{H}_{x^{t}}(y) > -2h $ for some $y\in\mathcal{N}_{x^{t}}$ }
        \State \textbf{break}
    \EndIf
\EndFor
\State Return $x^{t}$.
\end{algorithmic}
\end{breakablealgorithm}
\bigskip
The following theorem verifies the correctness of Algorithm \ref{alg:multi-dim-biased-iz} and estimates its asymptotic simulation cost.
\begin{theorem}\label{thm:biased-2}
Suppose that Assumptions \ref{asp:1}-\ref{asp:5}, \ref{asp:8} hold. Algorithm \ref{alg:multi-dim-biased-iz} returns an $(c,\delta)$-PCS-IZ solution and we have
\[ T(\delta,\mathcal{MC}_c) = O\left[ \frac{\gamma N}{(1-a)^{3}c^2} \log\left(\frac{1}{\delta}\right) + \frac{\gamma N}{1-a} \max\left\{ \log\left(\frac{1}{c}\right),1\right\} \right] = \tilde{O}\left[ \frac{\gamma N}{(1-a)^{3}c^2} \log\left(\frac{1}{\delta}\right) \right]. \]
\end{theorem}
}%
\begin{proof}{Proof of Theorem \ref{thm:biased-2}.}
If the algorithm terminates before the $T$-th iteration, then the condition at Line 11 is satisfied for the last iteration point, which we denote as $x^t$. Let
\[ y^{t} := \argmin_{y\in\mathcal{N}_{x^t}}~f(y). \]
Then, by the definition of confidence intervals, it holds
\[ \mathbb{E}[H_{x^{t-1}}(y^t,\eta_{y^t})] \geq - 3h \]
with probability at least $1-\delta/(2T) \geq 1-\delta$. By inequality \eqref{eqn:bias}, we know
\[ \min_{y\in\mathcal{N}_{x^t}}~f(y) - f(x^t) = f(y^t) - f(x^t) \geq - \frac{3h}{1-a} = -\frac{c}{4} \]
holds wit the same probability. We assume the event happens in the following proof. For any point $y\in\mathcal{X}$ such that $\|y-x^t\|_\infty \leq 1$, there exists two disjoint sets $\mathcal{S}_1,\mathcal{S}_2\subset [d]$ such that
\[ y = x^t + e_{\mathcal{S}_1} - e_{\mathcal{S}_2}, \]
where $e_{\mathcal{S}} := \sum_{i\in\mathcal{S}} e_i$ is the indicator vector of $\mathcal{S}$. Then, using the $L^\natural$-convexity of $f(x)$, we know
\[ f(y) - f(x^t) \geq f(x^t+e_{\mathcal{S}_1}) - f(x^t) + f(x^t-e_{\mathcal{S}_2}) - f(x^t) \geq -\frac{c}{2}. \]
Let $\tilde{f}(x)$ be the convex extension of $f(x)$ defined in \eqref{eqn:convex-extension}. Recalling expression \eqref{eqn:submodular-1}, we know 
\begin{align}\label{eqn:biased-3} \tilde{f}(y) - f(x^t) \geq -\frac{c}{2},\quad\forall y\in[1,N]^d\quad\mathrm{s.t.}~\|y - x^t\|_\infty \leq 1. \end{align}
We assume that $x^t$ is not the minimizer of $f(x)$, which we denote as $x^*$. Since the indifference zone parameter is $c$, we know
\begin{align}\label{eqn:biased-4} f(y) - f(x^*) \geq c,\quad\forall y\in\mathcal{X} \backslash\{x^*\}. \end{align}
Similarly, using expression \eqref{eqn:submodular-1}, we get 
\[ \tilde{f}(y) - f(x^*) \geq c,\quad\forall y\in[1,N]^d\quad\mathrm{s.t.}~\|y - x^*\|_\infty \leq 1. \]
If $\|x^t - x^*\|_\infty \leq 1$, then there exists a point $x^*$ such that $\|x^*-x^t\|_\infty \leq 1$ and
\[ f(x^*) - f(x^t) \leq -c, \]
which contradicts with inequality \eqref{eqn:biased-3}. Otherwise if $\|x^t-x^*\|_\infty \geq 2$, we define
\[ x^{t,1} := x^t + \frac{x^* - x^t}{\|x^t - x^*\|_\infty},\quad x^{t,2} := x^* + \frac{x^t - x^*}{\|x^* - x^t\|_\infty}. \]
Then, it holds
\[ \|x^t - x^{t,1}\|_\infty = 1,\quad \|x^* - x^{t,2}\|_\infty = 1 \]
and $x^{t,1},x^{t,2}$ are closer to $x^t,x^*$, respectively. By inequalities \eqref{eqn:biased-3} and \eqref{eqn:biased-4}, we get
\[ \tilde{f}(x^{t,1}) - f(x^t) \geq -\frac{c}{2},\quad \tilde{f}(x^*) - f(x^{t,2}) \leq -c. \]
However, the convexity of $\tilde{f}(x)$ on the segment $\overline{x^tx^*}$ implies that
\[ -\frac{c}{2} \leq \tilde{f}(x^{t,1}) - f(x^t) \leq \tilde{f}(x^*) - f(x^{t,2}) \leq -c, \]
which is a contradiction. Hence, we know $x^t=x^*$ is the minimizer of $f(x)$. This event happens with probability at least $1-\delta$ and therefore $x^t$ is a $(c,\delta)$-PCS-IZ solution.

Otherwise, we assume the algorithm terminates after $T$ iterations. We use the induction method to prove that
\[ f(x^t) - f(x^0) \leq - t \cdot \frac{(1-a)c}{12(1+a)} \]
happens with probability at least $1-t\cdot \delta/(2T)$. For the initial point $x^0$, this claim holds trivially. Suppose the induction assumption is true for $x^0,x^1,\dots,x^{t-1}$. For the $(t-1)$-th iteration, by the definition of confidence intervals, it holds
\[ \mathbb{E}[H_{x^{t-1}}(x^t,\eta_{x^t})] \leq - h \]
with probability at least $1-\delta/(2T)$. Using inequality \eqref{eqn:bias}, we know
\[ f(x^t) - f(x^{t-1}) \leq -\frac{h}{1+a} = -\frac{(1-a)c}{12(1+a)} \]
holds with the same probability. Using the induction assumption for $x^{t-1}$, we have
\[ f(x^t) - f(x^0) \leq - (t-1)\cdot \frac{(1-a)c}{12(1+a)} - \frac{(1-a)c}{12(1+a)} = -t\cdot \frac{(1-a)c}{12(1+a)} \]
holds with probability at least $1-(t-1)\delta/(2T) - \delta/(2T) = 1-t\cdot \delta/(2T)$. Hence, the induction assumption holds for $x^t$ and, by the induction method, holds for all iterations. Since the algorithm terminates after $T$ iterations, the last point $x^T$ satisfies
\[ f(x^T) - f(x^0) \leq - T \cdot \frac{(1-a)c}{12(1+a)} = - cN \]
with probability at least $1-T\cdot\delta/(2T) = 1-\delta/2$. Recalling the initial point $x^0$ is a $(cN,\delta/2)$-PGS solution, we know $x^T$ is the optimal point with probability at least $1-\delta$ and therefore is a $(c,\delta)$-PCS-IZ solution. 

Finally, we estimate the simulation cost of Algorithm \ref{alg:multi-dim-biased-iz}. By Theorem \ref{thm:biased-1}, the simulation cost for generating the initial point is
\[ \tilde{O}\left[ \frac{\gamma N}{(1-a)^3 c^2}\log\left(\frac{1}{\delta}\right) \right]. \]
For each iteration, Hoeffding bound implies that simulating
\[ \frac{2\tilde{\sigma}^2}{h^2}\log\left(\frac{4T}{\delta}\right) = \frac{288\tilde{\sigma}^2}{(1-a)^2c^2}\log\left(\frac{4T}{\delta}\right) \]
times is enough for the $1-\delta/(2T)$ confidence half-width to be smaller than $h$. Hence, the total simulation for iterations is at most
\[ T\cdot \gamma \cdot \frac{288\tilde{\sigma}^2}{(1-a)^2c^2}\log\left(\frac{4T}{\delta}\right) = \frac{1152\gamma\tilde{\sigma}^2(1+a)N}{(1-a)^3c^2}\log\left(\frac{4T}{\delta}\right) = O\left[ \frac{\gamma N}{(1-a)^3 c^2}\log\left(\frac{1}{\delta}\right) \right]. \]
Combining the simulation costs for initialization and iterations, we know the asymptotic simulation cost of Algorithm \ref{alg:multi-dim-biased-iz} is at most
\[ \tilde{O}\left[ \frac{\gamma N}{(1-a)^3 c^2}\log\left(\frac{1}{\delta}\right) \right]. \]
\hfill\Halmos\end{proof}

\end{document}